\newcommand{\N}{{\ensuremath{\mathbb{N}}}}
\newcommand{\C}{{\ensuremath{\mathbb{C}}}}
\newcommand{\stkout}[1]{\ifmmode\text{\sout{\ensuremath{#1}}}\else\sout{#1}\fi}
\DeclareMathOperator{\supp}{supp}
\DeclareMathOperator{\diag}{diag}
\DeclareMathOperator{\Tr}{Tr}
\DeclareMathOperator{\id}{id}
\DeclareMathOperator{\spn}{span}
\newcommand{\abs}[1]{\ensuremath{ {\left| #1 \right|} }}
\newcommand{\inner}[2]{\ensuremath{ {\left< #1 , #2 \right>} }}
\newcommand{\norm}[1]{\ensuremath{ {\left\| #1 \right\|} }}
\newcommand{\ca}[1]{\ensuremath{\mathcal{#1}}}
\newtheorem{proposition}{Proposition}[section]
\newtheorem{lemma}[proposition]{Lemma}
\newtheorem{theorem}[proposition]{Theorem}
\newtheorem{corollary}[proposition]{Corollary}
\newtheorem{claim}{Claim}
\newtheorem{case}{Case}
\theoremstyle{definition}
\newtheorem{example}[proposition]{Example}
\newtheorem{remark}[proposition]{Remark}
\numberwithin{equation}{section}
\newlength{\leftstackrelawd}
\newlength{\leftstackrelbwd}
\def\leftstackrel#1#2{\settowidth{\leftstackrelawd}%
	{${{}^{#1}}$}\settowidth{\leftstackrelbwd}{$#2$}%
	\addtolength{\leftstackrelawd}{-\leftstackrelbwd}%
	\leavevmode\ifthenelse{\lengthtest{\leftstackrelawd>0pt}}%
	{\kern-.5\leftstackrelawd}{}\mathrel{\mathop{#2}\limits^{#1}}}
\newcommand{\tripprox}{\mathrel{\setbox0\hbox{$\approx$}%
		\mbox{\makebox[0pt][l]{\raisebox{0.48\ht0}{$\approx$}}$\approx$}}}
\begin{document}
	
	\title[Jordan embeddings and rank preserves of SMAs]{Jordan embeddings and linear rank preservers of structural matrix algebras}
	
	\author{Ilja Gogi\'{c}, Mateo Toma\v{s}evi\'{c}}
	
	\address{I.~Gogi\'c, Department of Mathematics, Faculty of Science, University of Zagreb, Bijeni\v{c}ka 30, 10000 Zagreb, Croatia}
	\email{ilja@math.hr}
	
	\address{M.~Toma\v{s}evi\'c, Department of Mathematics, Faculty of Science, University of Zagreb, Bijeni\v{c}ka 30, 10000 Zagreb, Croatia}
	\email{mateo.tomasevic@math.hr}

	\thanks{We thank the referee for the careful reading of the paper and for useful comments and suggestions.}

	\keywords{Jordan homomorphisms, structural matrix algebras, linear preservers, rank preservers, diagonalization}

	\subjclass[2020]{47B49, 16S50, 16W20, 15A20}
	
	\date{\today}

	\begin{abstract}
		We consider subalgebras $\mathcal{A}$ of the algebra $M_n$ of $n \times n$ complex matrices that contain all diagonal matrices, known in the literature as the structural matrix algebras (SMAs). 
		
		Let $\mathcal{A}  \subseteq M_n$ be an arbitrary SMA. We first show that any commuting family of diagonalizable matrices in $\mathcal{A}$ can be intrinsically simultaneously diagonalized (i.e.\ the corresponding similarity can be chosen from $\mathcal{A}$). Using this, we then characterize when one SMA Jordan-embeds into another and in that case we describe the form of such Jordan embeddings. As a consequence, we obtain a description of Jordan automorphisms of SMAs, generalizing Coelho's result on their algebra automorphisms. Next, motivated by the results of Marcus-Moyls and Molnar-\v{S}emrl, connecting the linear rank-one preservers with Jordan embeddings $M_n \to M_n$ and $\mathcal{T}_n \to M_n$ (where $\mathcal{T}_n$ is the algebra of $n \times n$ upper-triangular matrices) respectively, we show that any linear unital rank-one preserver $\mathcal{A} \to M_n$ is necessarily a Jordan embedding. As the converse fails in general, we also provide a necessary and sufficient condition for when it does hold true. Finally, we obtain a complete description of linear rank preservers $\mathcal{A} \to M_n$, as maps of the form $X\mapsto S\left(PX + (I-P)X^t\right)T$, for some invertible matrices $S,T \in M_n$ and a central idempotent $P\in\mathcal{A}$.
	\end{abstract}
	
	\maketitle
	
	\setlength{\parindent}{10pt}

	\section{Introduction}
	
	\subsection{Linear preservers} Linear preserver theory is a very active research area in linear algebra and functional analysis. It considers linear maps between certain matrix or operator spaces which leave some of the relevant properties invariant. The objective is to give an explicit general form of such maps or a sufficiently elegant characterization. There exist many surveys of linear preservers which showcase the theory's history as well as some more recent developments (for instance see \cite{LiPierce, LiTsing}). The theory started with the result of Frobenius \cite{Frobenius} from 1897 which completely describes linear determinant preservers of the algebra $M_n$ of $n \times n$ complex matrices:
	\begin{theorem}[Frobenius]
		Let $\phi : M_n \to M_n$ be a linear map which preserves the determinant, i.e.\ $\det \phi(X)=\det X$ for all $X \in M_n$. Then there exist matrices $A,B \in M_n$ satisfying $\det(AB) = 1$ such that
		\begin{equation}\label{eq:AXB}
			\phi = A(\cdot)B, \qquad \text{ or }\qquad \phi = A(\cdot)^tB.
		\end{equation}
	\end{theorem}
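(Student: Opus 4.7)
My plan is to reduce Frobenius's theorem to a classical linear rank-one preserver problem, which is then settled by the Marcus--Moyls theorem alluded to in the abstract. The strategy has three stages: (i) show that $\phi$ is bijective; (ii) show that $\phi$ preserves the set of rank-one matrices; and (iii) apply Marcus--Moyls and impose the normalisation forced by $\det \phi(X) = \det X$.

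For (i), suppose $\phi(X_0) = 0$. Then for all $Y \in M_n$ and $t \in \C$,
\[ \det(X_0 + tY) = \det(\phi(X_0 + tY)) = \det(t\phi(Y)) = t^n \det Y. \]
Setting $Y = I$ yields $\det(tI + X_0) = t^n$, so $X_0$ is nilpotent; replacing $Y$ by $I + \epsilon E$ for arbitrary $E \in M_n$ and comparing the linear terms in $\epsilon$ forces $\Tr(EX_0) = 0$ for every $E$, hence $X_0 = 0$. Since $M_n$ is finite dimensional, $\phi$ is a bijection.

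For (ii), I would exploit the polynomial identity $\det(X + tY) = \det(\phi(X) + t\phi(Y))$ in $t$. For $X$ of rank $r$ and $Y$ invertible, the left-hand side equals $\det(Y)\cdot\chi_{-Y^{-1}X}(t)$, whose root at $t = 0$ has multiplicity equal to the algebraic multiplicity of $0$ as eigenvalue of $Y^{-1}X$ --- always at least $n - r$, and exactly $n - r$ for suitably chosen $Y$. Matching these multiplicities on both sides, together with bijectivity of $\phi$, should yield $\operatorname{rank} \phi(X) = \operatorname{rank} X$ for rank-one $X$.

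With rank-one preservation in hand, the Marcus--Moyls theorem gives $\phi(X) = AXB$ or $\phi(X) = AX^t B$ for some invertible $A, B \in M_n$, and inserting this into $\det \phi(X) = \det X$ at once yields $\det(AB) = 1$. The technical heart is step (ii): the rank of $\phi(X)$ must be extracted from a determinantal identity that does not automatically witness the rank of $X$, so one must engineer $Y$ carefully. An alternative is to bypass rank entirely and invoke Dieudonn\'e's theorem on preservers of the determinantal hypersurface (which only requires $\phi$ to send singular matrices to singular matrices), but Dieudonn\'e's proof is itself substantial.
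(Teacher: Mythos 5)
The paper does not prove this statement: it is quoted as Frobenius's classical 1897 theorem with only the citation \cite{Frobenius}, so there is no in-paper argument to compare yours against. Judged on its own, your reduction to Marcus--Moyls is sound, is the standard modern route, and is logically independent of Frobenius's original argument, so there is no circularity. Step (i) is complete: $\det(X_0+tI)=t^n$ gives nilpotency, and Jacobi's formula $\partial_\epsilon \det(X_0+tI+t\epsilon E)|_{\epsilon=0}=t\Tr(\operatorname{adj}(X_0+tI)E)$ together with $\operatorname{adj}(X_0+tI)=\sum_{k=0}^{n-1}(-1)^k t^{n-1-k}X_0^k$ yields $\Tr(X_0E)=0$ for all $E$ upon comparing the $t^{n-1}$ coefficients, hence $X_0=0$. (A shorter alternative: if $X_0\ne 0$ has rank $r\ge 1$, write $X_0=P(E_{11}+\cdots+E_{rr})Q$ and set $Y=P(E_{r+1,r+1}+\cdots+E_{nn})Q$; then $X_0+Y$ is invertible while $Y$ is singular, contradicting $\det(X_0+Y)=\det\phi(X_0+Y)=\det\phi(Y)=\det Y=0$.)

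Step (ii), which you rightly flag as the technical heart, does go through, and you should write it out. The two facts needed are: first, for every $X$ of rank $r$ one has $\operatorname{mult}_{t=0}\det(X+tY)\ge n-r$ for all invertible $Y$, with equality for $Y=PQ$ when $X=P(E_{11}+\cdots+E_{rr})Q$, since then $Y^{-1}X$ is similar to a diagonal idempotent of rank $r$ and the algebraic multiplicity of its eigenvalue $0$ is exactly $n-r$; second, $\phi$ restricts to a bijection of the invertible matrices onto themselves, because $\det\phi(Y)=\det Y$ transfers invertibility both ways across the linear bijection from step (i). Consequently $\min_{Y}\operatorname{mult}_{t=0}\det(X+tY)$, the minimum taken over invertible $Y$, equals $n-\operatorname{rank}X$ when computed on the left of the identity $\det(X+tY)=\det(\phi(X)+t\phi(Y))$ and $n-\operatorname{rank}\phi(X)$ when computed on the right; hence $\phi$ preserves all ranks, not merely rank one, and either Marcus--Moyls theorem applies. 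The normalization $\det(AB)=1$ is then immediate from $\det(AXB)=\det(AB)\det X$. In short: the proposal is correct, with only the routine details of step (ii) left to be made explicit.
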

	Starting with the above seminal theorem, linear preserver theory distinguishes itself by especially elegant results. Furthermore, rather deep results are sometimes proved by surprisingly elementary techniques which are cleverly combined and modified depending on the particularities of each problem. Linear preservers also have notable applications even outside pure mathematics. For instance, Wigner's theorem (see e.g.\ \cite{Semrl3,Wigner}) and related results form a cornerstone of the mathematical foundation of quantum mechanics.
	
	To further motivate our present research, we also showcase few more simple linear preserver problems on $M_n$, all taken from the survey paper \cite{LiPierce}, with the corresponding terminology.
	
	\begin{theorem}
		\begin{enumerate}[(a)]
			\item Let $\phi : M_n \to M_n$ be a bijective linear singularity preserver. Then there exist invertible matrices $A,B \in M_n$ such that $\phi$ is of the form $$\phi = A(\cdot)B, \qquad \text{ or }\qquad \phi = A(\cdot)^tB.$$
			\item Let $\phi : M_n \to M_n$ be a linear spectrum preserver. Then there exists an invertible matrix $A \in M_n$ such that $\phi$ is of the form $$\phi = A(\cdot)A^{-1}, \qquad \text{ or }\qquad \phi = A(\cdot)^tA^{-1}.$$
			\item Let $\phi : M_n \to M_n$ be a linear similarity preserver. Then there exist an invertible matrix $A \in M_n$ and $\alpha,\beta \in \C$ such that
			$$\phi = \alpha A(\cdot)A^{-1}+ \beta(\Tr(\cdot))I, \qquad \text{ or }\qquad \phi = \alpha A(\cdot)^t A^{-1}+ \beta(\Tr(\cdot))I$$
			or there exists a fixed $B \in M_n$ such that $\phi = (\Tr(\cdot))B$.
			\item Let $\phi : M_n \to M_n$ be a linear map which preserves unitary matrices or the spectral norm. Then there exist unitary matrices $U,V \in M_n$ such that
			$$\phi = U(\cdot)V, \qquad \text{ or }\qquad \phi = U(\cdot)^tV.$$
		\end{enumerate}
	\end{theorem}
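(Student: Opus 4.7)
My overall plan is to reduce each of parts (a)--(d) to a classical preserver result---either the Frobenius determinant theorem stated above, or the Marcus--Moyls characterization of rank-one preservers---by extracting from each hypothesis the appropriate polynomial or geometric invariant that $\phi$ must respect.

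For part (a), the set of singular matrices is the zero locus of the irreducible polynomial $\det$ on $M_n \cong \C^{n^2}$. Since $\phi$ is linear and bijective, preservation of this hypersurface combined with the Nullstellensatz and a degree comparison forces $\det \circ \phi = c \cdot \det$ for some nonzero constant $c$; rescaling then reduces to Frobenius and yields the stated form. For part (b), preservation of the spectrum as a multiset is equivalent to preservation of the characteristic polynomial, hence of both $\det$ and $\Tr$. Frobenius then yields $\phi(X) = AXB$ or $AX^tB$ with $\det(AB)=1$, and the condition $\Tr(AXB) = \Tr((BA)X) = \Tr X$ for all $X$ forces $BA = I$, i.e.\ $B = A^{-1}$. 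For part (d), the unitaries are exactly the extreme points of the spectral norm unit ball, so by Krein--Milman the two hypotheses are equivalent for a linear map; then Kadison's classical theorem on surjective isometries of unital $C^*$-algebras delivers the stated form directly.

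Part (c) is the main obstacle. Unlike the others, its conclusion is not of pure Frobenius shape: it admits a truly degenerate branch $(\Tr\cdot)B$ as well as an additive trace-correction summand $\beta(\Tr\cdot)I$, so one cannot hope for a direct reduction. My plan is to decompose $M_n = \C I \oplus \mathfrak{sl}_n$, noting that similarity orbits are contained in cosets of $\mathfrak{sl}_n$ (since trace is a similarity invariant), and to analyse the induced map on $\mathfrak{sl}_n$ modulo scalars. The key technical step is to show that this induced map carries rank-one matrices to matrices of rank at most one, using that the rank-one nilpotent class is a minimal nilpotent orbit under similarity and that its Zariski closure (the variety of rank $\le 1$ matrices in $\mathfrak{sl}_n$) must be stabilised. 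A Marcus--Moyls type analysis then forces the induced map to be either a scalar multiple of $A(\cdot)A^{-1}$ or $A(\cdot)^tA^{-1}$, or to vanish; these respectively correspond to the two stated non-degenerate forms and to the purely degenerate $(\Tr\cdot)B$ form, while the additive $\beta(\Tr\cdot)I$ summand accounts for the freedom to map $\C I$ within $\C I$.
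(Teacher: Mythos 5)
First, a structural point: the paper does not prove this theorem at all --- it is quoted from the survey \cite{LiPierce} purely as motivation, so there is no in-paper argument to compare yours against. Judged on its own terms, your reductions in (a) and (b) are sound and standard: the Nullstellensatz-plus-degree argument in (a) is the classical route (bijectivity correctly rules out $\det\circ\phi\equiv 0$ before you divide), and in (b) the computation $\Tr(AXB)=\Tr(BAX)$ does pin down $B=A^{-1}$. The one quibble in (b) is that you assume the spectrum is preserved with multiplicities; if the hypothesis is only set equality of spectra, you should first pass to the Zariski-dense set of matrices with simple spectrum, where set and multiset agreement coincide, and then conclude equality of characteristic polynomials by density.

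The genuine gaps are in (c) and (d). In (c), the pivotal claim --- that the induced map on $\mathfrak{sl}_n$ sends rank-one (hence nilpotent) matrices to matrices of rank at most one --- does not follow from the closure argument you sketch. A similarity preserver maps the orbit of $E_{12}$ into a single orbit $\ca{O}'$ and therefore its closure into $\overline{\ca{O}'}$, but \emph{every} nilpotent orbit closure contains $0$ and is stable under scaling, so this only shows that $\phi(E_{12})$ is nilpotent; nothing a priori prevents $\phi(E_{12})$ from being a rank-two nilpotent such as $E_{12}+E_{34}$, so the variety of rank-$\le 1$ matrices need not be ``stabilised.'' Ruling this out is exactly where the real work in Hiai-type arguments lies. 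You also still need to show that $\phi(I)$ is scalar (the similarity orbit of $I$ is a singleton, so the hypothesis gives no direct information there), and to handle the fact that a Marcus--Moyls analysis of a map sending rank one into rank $\le 1$ has its own degenerate branch. In (d), Krein--Milman gives you only one direction: a spectral-norm preserver is bijective in finite dimensions, maps the unit ball onto itself, and hence permutes its extreme points, the unitaries. The converse --- that a linear map sending unitaries to unitaries is an isometry --- is not a consequence of extreme-point considerations; from the fact that the unit ball is the convex hull of the unitaries you get only $\norm{\phi}\le 1$, and injectivity must be established by a separate argument (e.g.\ differentiating $t\mapsto\phi(e^{itH})$) before Kadison's theorem applies. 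Parts (a) and (b) I would accept as written; (c) and (d) need the missing steps supplied or a citation to the original sources.
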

	
	One easily observes in the above examples, as in many others, the appearance of maps of the form \eqref{eq:AXB}, or even more specifically when $B=A^{-1}$, i.e.
	\begin{equation}\label{eq:Jordan homo on Mn}
		\phi = A(\cdot)A^{-1}, \qquad \text{ or }\qquad \phi = A(\cdot)^tA^{-1}.
	\end{equation}
	As a special case of the Skolem-Noether theorem, one concludes that the maps of the form \eqref{eq:Jordan homo on Mn} constitute precisely the automorphisms and antiautomorphisms of the algebra $M_n$, respectively (for a simple proof see \cite[Theorem 1.1 and Corollary 1.3]{Semrl2}). One can unify both types of maps under the more general concept of linear maps $\phi : M_n \to M_n$ which preserve squares, known in the literature as Jordan homomorphisms.
	
	\subsection{Jordan homomorphisms}
	
	If $\ca{A}$ and $\ca{B}$ are rings (associative algebras), a \emph{Jordan homomorphism} is an additive (linear) map $\phi : \ca{A} \to \ca{B}$ such that 
	$$\phi(ab+ba) = \phi(a)\phi(b) + \phi(b)\phi(a), \qquad \text{ for all }a,b \in \ca{A}.$$ 
	When the rings (algebras) are $2$-torsion-free, this is equivalent to
	$$\phi(a^2) = \phi(a)^2, \qquad \text{ for all }a \in \ca{A}.$$
	Jordan homomorphisms actually originate as homomorphisms in the category of Jordan algebras, which is a class of nonassociative algebras analogous to Lie algebras. Jordan algebras were first introduced by Pascual Jordan in 1933 in the context of quantum mechanics \cite{Jordan}. They relate to the standard associative algebras, as the majority of the practically relevant Jordan algebras naturally arise as subalgebras of an associative algebra $\ca{A}$ under a symmetric product given by $x \circ y = xy + yx.$ Reading this, one is immediately reminded of the fact that Lie algebras always embed into an associative algebra under an antisymmetric product $[x,y] = xy - yx$. In contrast, there are Jordan algebras not arising in this way, known in the literature as the \emph{exceptional Jordan algebras} \cite{Albert1}.
	
	\smallskip
	
	In any case, Jordan algebras give rise to the study of Jordan homomorphisms in the context of associative rings and algebras. Ring (algebra) homomorphisms and antihomomorphisms are immediate examples of such maps.
	
	\smallskip
	
	One of the central issues in Jordan theory is to determine the conditions on rings (algebras) $\ca{A}$ and $\ca{B}$ which guarantee that every Jordan homomorphism $\phi : \ca{A} \to \ca{B}$ (possibly under some additional constraints such as surjectivity) is either multiplicative or antimultiplicative. More broadly, the problem involves representing all such Jordan homomorphisms as an appropriate combination of ring (algebra) homomorphisms and antihomomorphisms, if possible. This question has a long and extensive history: in 1950, Jacobson and Rickart \cite{JacobsonRickart} demonstrated that a Jordan homomorphism from any ring to an integral domain is either a homomorphism or an antihomomorphism. This paper is particularly significant for our discussion, as it also shows that for a unital ring $\ca{R}$, any  Jordan homomorphism  from the ring $M_n(\ca{R})$ of $n \times n$ matrices ($n \ge 2$) with entries in $\ca{R}$ to an arbitrary ring $\ca{S}$ can be expressed as a sum of a homomorphism and an antihomomorphism. Similarly, Herstein \cite{Herstein} established that a Jordan homomorphism onto a prime ring is either multiplicative or antimultiplicative. Smiley later refined the same result \cite{Smiley}.
	
	By integrating Herstein's result with the already mentioned fact that all automorphisms of $M_n$ are inner, one can conclude that all nonzero Jordan endomorphisms $\phi$ of $M_n$ are precisely maps of the form \eqref{eq:Jordan homo on Mn} for some invertible matrix $A \in M_n$.
	
	There have been numerous efforts to characterize Jordan homomorphisms, particularly in the context of matrix algebras, through preserver properties. These efforts trace back to at least 1970 and Kaplansky's notable problem \cite{Kaplansky} which asks for conditions on unital (complex) Banach algebras $\ca{A}$ and $\ca{B}$ such that a linear unital invertibility preserver $\phi : \ca{A} \to \ca{B}$ (possibly with some extra conditions, such as surjectivity) is necessarily a Jordan homomorphism. While significant attention has been given to this problem and progress has been made in specific cases, it remains largely unsolved, even for $C^*$-algebras (see e.g.\ \cite[p.\ 270]{BresarSemrl}). 
	
	When considering Jordan homomorphisms of matrix algebras other than the full matrix algebra $M_n$, the first result we mention comes from the paper \cite{MolnarSemrl}, which incidentally also serves as the main motivation for the second part of this paper, which we discuss further on. It concerns the algebra $\ca{T}_n\subseteq M_n$ of all upper-triangular matrices:
	
	\begin{theorem}\cite[Corollary 4]{MolnarSemrl}\label{thm:automorphisms of Tn}
		Let $\phi : \ca{T}_n \to \ca{T}_n$ be a Jordan automorphism. Then there exists an invertible matrix $T \in \ca{T}_n$ such that
		$$\phi = T(\cdot)T^{-1}, \qquad \text{ or }\qquad \phi = (TJ)(\cdot)^{t}(TJ)^{-1},$$
		where $J:= \sum_{i=1}^n E_{i, n+1-i}\in M_n$.
	\end{theorem}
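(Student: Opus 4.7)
The plan is to use the paper's intrinsic simultaneous diagonalization theorem to reduce to the case where $\phi$ sends each $E_{ii}$ to some $E_{\sigma(i),\sigma(i)}$, and then to show that the only admissible permutations $\sigma$ are the identity and the reversal $\tau:i\mapsto n+1-i$, handling the two cases separately.

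First I observe that $\phi$ is automatically unital: linearizing the Jordan identity gives $\phi(I)Y + Y\phi(I) = 2Y$ for all $Y \in \ca{T}_n$, and taking $Y = I$ yields $\phi(I) = I$. Next, every Jordan homomorphism in characteristic $\ne 2$ satisfies the triple product identity $\phi(aba) = \phi(a)\phi(b)\phi(a)$. Setting $e_i := \phi(E_{ii})$ and applying the Jordan identity to $E_{ii}E_{jj} + E_{jj}E_{ii} = 0$ together with the triple product identity to $E_{ii}E_{jj}E_{ii} = 0$ yields $e_i e_j = 0$ for $i \ne j$, so $\{e_i\}_{i=1}^n$ is a system of orthogonal idempotents in $\ca{T}_n$ summing to $I$, each necessarily of rank one. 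Since the $e_i$ commute and are diagonalizable, the simultaneous diagonalization result established earlier in the paper supplies an invertible $S \in \ca{T}_n$ with $S^{-1} e_i S = E_{\sigma(i),\sigma(i)}$ for some permutation $\sigma$ of $\{1,\ldots,n\}$. Replacing $\phi$ by $S^{-1}\phi(\cdot)S$ (still a Jordan automorphism of $\ca{T}_n$), we may assume $\phi(E_{ii}) = E_{\sigma(i),\sigma(i)}$.

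For each pair $i<j$ the Jordan relations between $E_{ij}$ and the various $E_{kk}$ translate into linear constraints on $\phi(E_{ij})$; a short entry-wise analysis forces $\phi(E_{ij})$ to be a nonzero scalar multiple of whichever of $E_{\sigma(i),\sigma(j)}$ or $E_{\sigma(j),\sigma(i)}$ lies in $\ca{T}_n$. The main obstacle is the combinatorial step of proving $\sigma \in \{\mathrm{id},\tau\}$. For this I apply the Jordan identity to $E_{ij}E_{jk} + E_{jk}E_{ij} = E_{ik}$ for each triple $i<j<k$: if $\sigma$ fails to be monotonic on $\{i,j,k\}$, then both products $\phi(E_{ij})\phi(E_{jk})$ and $\phi(E_{jk})\phi(E_{ij})$ vanish because of a mismatch of inner matrix-unit indices, forcing $\phi(E_{ik}) = 0$ and contradicting injectivity. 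Hence $\sigma$ is monotonic on every three-element subset of $\{1,\ldots,n\}$; ruling out zig-zag patterns on consecutive triples $\{i,i+1,i+2\}$ shows that $\sigma(1),\ldots,\sigma(n)$ is globally monotonic, and being a permutation of $\{1,\ldots,n\}$ it must be $\mathrm{id}$ or $\tau$.

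In the identity case the same Jordan identity yields the cocycle relation $c_{ij}c_{jk} = c_{ik}$ for the scalars defined by $\phi(E_{ij}) = c_{ij}E_{ij}$; writing $c_{ij} = t_i/t_j$ and setting $T = \diag(t_1,\ldots,t_n) \in \ca{T}_n$, one verifies $\phi = T(\cdot)T^{-1}$ on the matrix-unit basis of $\ca{T}_n$. In the reversal case I pass to $\tilde\phi(X) := J\phi(X)^tJ$; since $X \mapsto JX^tJ$ is an involutive algebra antiautomorphism of $\ca{T}_n$ (conjugation by $J$ reflects across the antidiagonal, converting $X^t$ back into upper-triangular form), $\tilde\phi$ is a Jordan automorphism of $\ca{T}_n$ with $\tilde\phi(E_{ii}) = E_{ii}$. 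The identity case then gives $\tilde\phi = T(\cdot)T^{-1}$ for some invertible $T \in \ca{T}_n$; solving back for $\phi$ using $J^2 = I$, one obtains $\phi(X) = (T'J)X^t(T'J)^{-1}$ with $T' := JT^{-t}J \in \ca{T}_n$, which is the stated form.
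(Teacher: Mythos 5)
The paper does not prove Theorem \ref{thm:automorphisms of Tn} at all: it is imported verbatim from Molnar and \v{S}emrl \cite{MolnarSemrl} as motivation, so there is no internal proof to compare against. Your argument is correct and self-contained, and it is in effect the specialization to $\ca{T}_n$ of the machinery the paper later builds for arbitrary SMAs (Lemmas \ref{le:preserves diagonalizable}--\ref{le:MAMP}, Theorem \ref{thm:general form of phi}, Corollary \ref{cor:JAUT-SMA}): orthogonalize and intrinsically diagonalize the idempotents $\phi(E_{ii})$ (your use of Theorem \ref{thm:inner diagonalization on SMA} is legitimate, though Lemma \ref{le:simultaneous diagonalization of idempotents} already suffices for a family of orthogonal idempotents in $\ca{T}_n$), pin each $\phi(E_{ij})$ to the two positions $(\sigma(i),\sigma(j))$, $(\sigma(j),\sigma(i))$ with exactly one nonzero coefficient (this is precisely Lemma \ref{le:Eij or Eji}; to get there one uses $E_{kk}\circ E_{ij}=0$ for $k\ne i,j$, $E_{ii}\circ E_{ij}=E_{jj}\circ E_{ij}=E_{ij}$, and then $\phi(E_{ij})^2=0$), and control the combinatorics via the Jordan product on triples. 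Your triple computation is right: both terms of $\phi(E_{ij})\circ\phi(E_{jk})$ vanish unless $\sigma(i)<\sigma(j)<\sigma(k)$ or $\sigma(i)>\sigma(j)>\sigma(k)$, and monotonicity on all triples of a permutation forces $\sigma\in\{\id,\tau\}$. The one genuinely $\ca{T}_n$-specific step, which you use implicitly, is that every multiplicative cocycle $c_{ij}c_{jk}=c_{ik}$ on a total order separates as $c_{ij}=t_i/t_j$ (take $t_i:=c_{in}$); for general SMAs nontrivial transitive maps exist, which is exactly why the paper's Theorem \ref{thm:general form of phi} must retain the factor $g^*$ while your conclusion for $\ca{T}_n$ does not. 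The reduction of the reversal case to the identity case via the antiautomorphism $X\mapsto JX^tJ$ and the bookkeeping $T'=JT^{-t}J\in\ca{T}_n^\times$ are also correct.
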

	As the matrix units $E_{ij}$ (especially the diagonal ones) play an essential role in the results of this type, it is natural to consider Jordan embeddings $\phi : \ca{A} \to M_n$, where  $\ca{A}$ is a unital subalgebra of $M_n$ spanned by a subset of matrix units, and characterizations of such maps via preserving properties. Such algebras $\ca{A}$ were introduced by van Wyk in 1988 as \emph{structural matrix algebras} (in short, SMAs)  \cite{VanWyk1}. In fact, SMAs are precisely the subalgebras of $M_n$ which contain all diagonal matrices (Proposition \ref{prop:SMAschar}). SMAs have been studied in, for example, \cite{Akkurt, Akkurt2, AkkurtBarkerWild, BeslagaDascalescu, BeslagaDascalescu2, Coelho, Coelho2,VanWyk2}. Work has also been done on \emph{incidence algebras}, which are similarly defined but somewhat more general, venturing into infinite-dimensional settings (see \cite{BrusamarelloFornaroliKhrypchenko1, BrusamarelloFornaroliKhrypchenko2, GarcesKhrypchenko, GarcesKhrypchenko2, SlowikVanWyk}).
	
	Coelho's important paper \cite{Coelho} gives a complete description of (algebra) automorphisms of SMAs (\cite[Theorem C]{Coelho}), as well as supplying necessary and sufficient conditions on the structure of SMA such that all automorphisms are inner (see Theorem \ref{thm:when are all automorphism inner?} further in the text).  The paper \cite{Coelho} cleverly combines the methods of abstract algebra with the purely combinatorial techniques of graph theory. It serves as both model and the main inspiration for our first main result, Theorem \ref{thm:general form of phi}, and its immediate consequence, Corollary \ref{cor:SMA Jordan embeddings}. We also mention the paper \cite{Akkurt} by Akkurt et.\ al.\ which establishes the aforementioned automorphism description \cite[Theorem C]{Coelho} using alternative methods, which themselves proved useful for our results. We reference this reformulated result as Theorem \ref{thm:description of Aut(A_rho)}.
	
	Regarding Jordan homomorphisms of SMAs, the most important result is again due to Akkurt et.\ al. Namely, \cite[Case 2]{Akkurt2} states that for suitable SMAs $\ca{A}$, any Jordan homomorphism $\phi : \ca{A} \to \ca{B}$, where $\ca{B}$ is an arbitrary ring, is necessarily a sum of a homomorphism and an antihomomorphism. Futhermore, \cite[Case 1]{Akkurt2} extends the previous result of Benkovi\v{c} \cite{Benkovic}, which describes Jordan homomorphisms of triangular algebras. Another relevant paper is \cite{GarcesKhrypchenko}. It considers incidence algebras over finite connected posets and formulates complete descriptions of their Jordan automorphisms (viz.\  \cite[Section 4]{GarcesKhrypchenko}), along with an elegant characterization as (idemp)potent preservers.

	\subsection{Rank and rank-one preservers}
	
	Now let us circle back to preserver problems, more specifically to the problem of linear rank preservers. We start by stating the celebrated result of Marcus and Moyls from 1959:
	\begin{theorem}\cite[Theorem 1]{MarcusMoyls}\label{thm:rank preserver on Mn}
		Let $\phi : M_n \to M_n$ be a linear rank preserver. Then there exist invertible matrices $A,B \in M_n$ such that $\phi$ is of the form \eqref{eq:AXB}.
		
	\end{theorem}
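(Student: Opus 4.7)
The plan is to reduce the problem to analyzing the action of $\phi$ on rank-one matrices. Since $\phi$ preserves rank zero, it is injective, hence bijective on the finite-dimensional space $M_n$, and in particular it maps rank-one matrices bijectively to rank-one matrices. Every rank-one matrix has the form $xy^t$ with nonzero $x, y \in \C^n$, and such outer products span $M_n$. Hence it suffices to establish the claimed form of $\phi$ on rank-one matrices; extending by linearity to all of $M_n$ and invoking bijectivity of $\phi$ will then force $A$ and $B$ to be invertible.

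The key technical ingredient I would invoke is the classical rank-one subspace lemma: every linear subspace $W \subseteq M_n$ consisting entirely of matrices of rank at most one is either of the form $\{uy^t : y \in V\}$ for some fixed nonzero $u \in \C^n$ and subspace $V \subseteq \C^n$ (``common column''), or of the form $\{xv^t : x \in V\}$ for some fixed nonzero $v$ and subspace $V$ (``common row''). This follows from the observation that if $u_1 v_1^t$ and $u_2 v_2^t$ are two linearly independent elements of $W$, then $u_1 v_1^t + u_2 v_2^t$ has rank at most one, which forces $u_1 \parallel u_2$ or $v_1 \parallel v_2$; an induction on $\dim W$ then pins down the global form.

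Applying the lemma slice-by-slice, for each fixed nonzero $y_0 \in \C^n$ the subspace $\phi(\{xy_0^t : x \in \C^n\})$ either has a common column or a common row, and symmetrically for each fixed nonzero $x_0$ the subspace $\phi(\{x_0 y^t : y \in \C^n\})$ has a common column or a common row. The main obstacle is the combining step: one must prove that these local dichotomies are globally consistent and fit together into exactly one of two patterns, namely ``$y$-slices have a common row while $x$-slices have a common column'' (leading to $\phi(X) = AXB$), or the swapped pattern (leading to $\phi(X) = AX^tB$). To rule out inconsistent mixing, I would exploit bilinearity together with the fact that $\phi$ preserves rank two, so that sums of independent rank-one images must yield rank-two matrices, yielding contradictions against any attempt to mix the two types across different slices. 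Once global consistency is established, expressing $\phi(xy^t)$ in the resulting form and extracting the linear dependence on $x$ and $y$ directly produces the matrices $A$ and $B$, which are invertible because $\phi$ is.
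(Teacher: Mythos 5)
The paper does not prove this statement: it is quoted from Marcus--Moyls \cite{MarcusMoyls} as background, so there is no internal proof to measure yours against. On its own terms your outline follows the classical route and is essentially sound: injectivity from rank preservation, the fact that a linear subspace of matrices of rank at most one has a common column or a common row (which does follow, as you indicate, from the observation that rank-oneness of $u_1v_1^t+u_2v_2^t$ forces $u_1\parallel u_2$ or $v_1\parallel v_2$; this is Lemma \ref{le:uniqueness of rank-one representation}\,(c) of the paper), the slice-by-slice dichotomy, and a consistency argument. Two steps remain promises rather than proofs. First, for the consistency step the mechanism you name is not quite the one that rules out mixing: if the slice through $y_0$ maps into a common-column space $\{u_0z^t\}$ and the slice through $y_1$ into a common-row space $\{zv_1^t\}$, the contradiction comes from rank-\emph{one} preservation applied to $\phi\bigl(x(y_0+y_1)^t\bigr)=u_0\alpha(x)^t+\beta(x)v_1^t$, which would force $\C^n$ to be the union of the two proper subspaces $\beta^{-1}(\spn\{u_0\})$ and $\alpha^{-1}(\spn\{v_1\})$ --- impossible for $n\ge 2$. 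Rank-two preservation is instead what you need to see that distinct slices of the same type do not share their common column (equivalently, that the eventual $A$ and $B$ act injectively) and to exclude the degenerate pattern in which all slices share one and the same column, confining the image to an $n$-dimensional space and contradicting bijectivity. Second, after consistency you only know $\phi(xy^t)=\lambda(x,y)\,(Ax)(By)^t$ for some scalar-valued function $\lambda$, and you must still use bilinearity of $(x,y)\mapsto\phi(xy^t)$ to show that $\lambda$ is constant before absorbing it into $A$. Both gaps are standard to fill; your toolkit is exactly the one the paper itself deploys, in the more delicate SMA setting, in the proof of Theorem \ref{thm:rank-one preserver}\,(a).
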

	
	The same authors also showed that it actually suffices to assume that $\phi$ is merely a rank-one preserver to obtain the same conclusion:
	
	\begin{theorem}\cite[Corollary, p.\ 1219]{MarcusMoyls2}
		Let $\phi : M_n \to M_n$ be a linear rank-one preserver. Then there exist invertible matrices $A,B \in M_n$ such that $\phi$ is of the form \eqref{eq:AXB}.
	\end{theorem}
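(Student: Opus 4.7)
The plan is to study $\phi$ through its action on the rank-one matrices, which are precisely those of the form $xy^t$ for nonzero $x,y\in\C^n$. The goal is to produce linear maps $A,B\in M_n$ such that on rank-ones either $\phi(xy^t) = (Ax)(By)^t$ for all $x,y$, or else $\phi(xy^t) = (Ay)(Bx)^t$. Since rank-one matrices span $M_n$, this extends uniquely to the global formulas $\phi = A(\cdot)B^t$ or $\phi = A(\cdot)^t B^t$, matching the asserted form after relabelling $B^t$ as $B$.

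For each nonzero $y\in\C^n$, form the linear subspace $V_y := \{\phi(xy^t) : x\in\C^n\}\subseteq M_n$. Since $xy^t$ has rank at most one and $\phi(0)=0$, every element of $V_y$ has rank at most one. The principal tool is an elementary dichotomy: any linear subspace of $M_n$ whose elements are all of rank at most one is either of \emph{column type} (there exists $g\in\C^n$ such that every element has the form $ug^t$) or of \emph{row type} (there exists $h\in\C^n$ such that every element has the form $hv^t$). I would prove this by picking two linearly independent elements $x_1y_1^t$, $x_2y_2^t$ and noting that $x_1y_1^t + \lambda x_2y_2^t$ must stay of rank at most one for every $\lambda\in\C$, forcing $x_1\parallel x_2$ or $y_1\parallel y_2$; a short propagation step yields the global statement.

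The next step is to show that the type is independent of $y$. Suppose for contradiction that $V_{y_1}$ is of column type with direction $g_1$ and $V_{y_2}$ is of row type with direction $h_2$. Write $\phi(xy_1^t) = u_1(x)g_1^t$ and $\phi(xy_2^t) = h_2 v_2(x)^t$, where $u_1$ and $v_2$ are injective linear maps (injectivity forced by rank-one preservation, since $x\ne 0$ must give rank-one outputs). Applying $\phi$ to $x(y_1+y_2)^t$ produces the rank-$\le 1$ matrix $u_1(x)g_1^t + h_2 v_2(x)^t$, and a $2\times 2$ rank analysis shows this forces each $x$ to lie in $\{x : u_1(x)\parallel h_2\}$ or $\{x : v_2(x)\parallel g_1\}$; these are proper linear subspaces whose union cannot exhaust $\C^n$ when $n\ge 2$, giving the contradiction. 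Hence all $V_y$ share a common type, say column, so $\phi(xy^t) = u(x,y) g(y)^t$ with $u$ linear in $x$ for each fixed $y$.

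To finish, I would repeat the same analysis with the roles of $x$ and $y$ interchanged, working with $W_x := \{\phi(xy^t) : y\in\C^n\}$, and combine the two descriptions. The scalar ambiguity in a factorization $ug^t$ is eliminated by choosing a linear functional $\mu$ with $\mu(g(y))\ne 0$ generically and normalizing so $\mu(g(y))=1$; matching the resulting genuinely bilinear expression against the linearity of $y\mapsto\phi(xy^t)$ then extracts linear operators $A,B\in M_n$ with $\phi(xy^t) = (Ax)(By)^t$, which extends by linearity to $\phi(X)=AXB^t$. The row case produces $\phi(X)=AX^tB^t$ analogously. Invertibility of $A$ follows because any nonzero $x_0\in\ker A$ would give $\phi(x_0y^t)=0$ for all $y$, violating rank-one preservation, and symmetrically for $B$. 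I expect the hardest step to be the scalar-ambiguity normalization in the extraction of $A$ and $B$; the dichotomy lemma and the type-uniformity argument are the conceptual crux, but computationally straightforward.
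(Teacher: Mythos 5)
The paper does not prove this statement; it is quoted directly from Marcus--Moyls \cite[Corollary, p.~1219]{MarcusMoyls2}, so there is no internal proof to compare against and your attempt has to be judged against the standard argument. Your outline follows the usual modern route (analysing the rank-$\le 1$ subspaces $V_y=\phi(\C^n y^t)$ and invoking the column-type/row-type dichotomy for such subspaces), and most of the steps are sound as sketched: the dichotomy lemma, the injectivity of $\phi$ on each $\C^n y^t$ (hence $\dim V_y=n$ and the exclusivity of the two types), the argument that all $V_y$ carry the same type, and the final extraction and invertibility of $A$ and $B$ in the non-degenerate case.

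There is, however, one genuine gap. Knowing that all $V_y$ are (say) of column type and that all $W_x$ carry a single common type still leaves two configurations, and you only treat one. If the $W_x$ are of row type you get $\phi(xy^t)=c(x,y)\,h(x)g(y)^t$ and your normalization step goes through; but if the $W_x$ are \emph{also} of column type, then fixing any $x_0\neq 0$ forces all directions $g(y)$ to be mutually parallel, so the entire image of $\phi$ lies in the $n$-dimensional space $\C^n g_0^t$ and no invertible $A,B$ can exist --- your extraction step would fail silently here. This degenerate alternative is not a phantom: it is exactly the surviving exceptional case in Moln\'ar--\v{S}emrl's result for rank-one preservers on $\ca{T}_n$ (Theorem \ref{thm:rank-one preserver on Tn}, ``or the image of $\phi$ consists of matrices of rank at most one''), so for $M_n$ it must be excluded by an explicit argument rather than by the type analysis alone. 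A short fix: in the degenerate case, for linearly independent $y_1,y_2$ the injective maps $x\mapsto\phi(xy_i^t)$ are both bijections onto $\C^n g_0^t$, so $T:=(\phi|_{\C^n y_2^t})^{-1}\circ\phi|_{\C^n y_1^t}$ is a linear automorphism of $\C^n$; an eigenvector $Tx_0=\lambda x_0$ yields $\phi\bigl(x_0(y_1-\lambda y_2)^t\bigr)=0$ with $x_0(y_1-\lambda y_2)^t$ of rank one, a contradiction. With this case (and its mirror image for the row-type configuration) closed, your plan is complete.
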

	
	In particular, the unital linear rank (or rank-one) preservers $\phi : M_n \to M_n$ are precisely the Jordan automorphisms of $M_n$. Analogous linear preserver problems were also considered for the upper-triangular algebra $\ca{T}_n$. We already mentioned the important paper \cite{MolnarSemrl} by  Molnar and \v{S}emrl in the context of Jordan automorphisms of $\ca{T}_n$. In addition, the same paper (see also the related paper \cite{ChooiLim}) presents the following result in the context of rank-one preservers, which we paraphrase here:
	
	\begin{theorem}\cite[Theorem 1]{MolnarSemrl}\label{thm:rank-one preserver on Tn}
		Let $\phi : \ca{T}_n \to M_n$ be a linear rank-one preserver. Then there exist invertible matrices $A,B \in M_n$ such that $\phi$ is of the form \eqref{eq:AXB}  or the image of $\phi$ consists of matrices of rank at most one. 
	\end{theorem}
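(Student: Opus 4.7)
My plan is to adapt the classical Marcus--Moyls argument for $M_n$ to the upper-triangular setting, exploiting the fact that $\ca{T}_n$ still contains enough rank-one matrices of the form $xy^t$ to propagate the key parallelism relations. For each $i \leq j$ write $\phi(E_{ij}) = u_{ij} v_{ij}^t$ with $u_{ij}, v_{ij} \in \C^n$ (unique up to reciprocal scalar when nonzero). If every element in the image of $\phi$ has rank at most one the alternative conclusion holds, so I may assume henceforth that some image element has rank $\geq 2$.

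The crucial tool is the elementary fact that $u_1 v_1^t + u_2 v_2^t$ has rank at most one if and only if $u_1 \parallel u_2$ or $v_1 \parallel v_2$ (when both summands are nonzero). I apply this along several families of rank-one sums living inside $\ca{T}_n$: the row-type $e_i(e_j + e_k)^t = E_{ij} + E_{ik}$ (valid for $i \leq j,k$), the column-type $(e_i + e_k)e_j^t = E_{ij} + E_{kj}$ (valid for $i,k \leq j$), and the rectangular sum $(e_i + e_k)(e_j + e_\ell)^t$ (valid for $\max(i,k) \leq \min(j,\ell)$). Propagating the resulting parallelism relations through the graph on matrix units defined by these admissible sums yields a global dichotomy: either (I) $u_{ij}$ depends only on $i$ and $v_{ij}$ only on $j$ (up to scalars), or (II) $u_{ij}$ depends only on $j$ and $v_{ij}$ only on $i$.

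In case (I) I set $Ae_i := u_{ii}$ and $B^t e_j := v_{jj}$, which after rescaling gives $\phi(E_{ij}) = \gamma_{ij}\, A E_{ij} B$ for scalars $\gamma_{ij}$. Applying the rank-at-most-one constraint to $\phi((e_i + e_k)(e_j + e_\ell)^t)$ yields the cocycle identity $\gamma_{ij}\gamma_{k\ell} = \gamma_{i\ell}\gamma_{kj}$, which factors as $\gamma_{ij} = \alpha_i \beta_j$ and allows the scalars to be absorbed into $A$ and $B$; linear extension then gives $\phi = A(\cdot)B$. Case (II) is symmetric and yields $\phi = A(\cdot)^t B$. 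Invertibility of $A$ and $B$ finally follows from the rank $\geq 2$ assumption: otherwise the image would sit in $A M_n B$ with $\min(\mathrm{rank}\,A, \mathrm{rank}\,B) \leq 1$, putting us back in the degenerate alternative.

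The principal obstacle is the connectivity step underlying the global dichotomy. In $M_n$ any two matrix units can be linked via an admissible rank-one sum, but in $\ca{T}_n$ the upper-triangular constraint forbids sums such as $E_{ij} + E_{k\ell}$ with $k > \ell$, so the graph of admissible rank-one sums is considerably sparser. One must verify that this graph is nevertheless connected enough to force a single global choice between (I) and (II) -- ruling out configurations in which the parallelism type switches at different index pairs -- and the rank $\geq 2$ hypothesis is exactly what is needed to exclude such ``mixed'' configurations along the way.
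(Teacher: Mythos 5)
There is a genuine gap, and it sits exactly where you park it: the ``global dichotomy'' between cases (I) and (II) is asserted, not proved, and it is the entire content of the theorem. (For the record, the paper does not prove this statement either --- it is quoted from Molnar and \v{S}emrl --- but its closest in-house analogue, Theorem \ref{thm:rank-one preserver}~(a), devotes Claims 1--7 of a multi-page argument precisely to the step you defer, and even then only in the \emph{unital} case, where $\phi(I)=I$ forces the vectors $u_{11},\dots,u_{nn}$ and $v_{11},\dots,v_{nn}$ to form dual bases; that orthogonality is the engine behind all the subsequent parallelism propagation. Here you have no unitality, so none of that comes for free.) Your closing remark that ``the rank $\geq 2$ hypothesis is exactly what is needed to exclude mixed configurations'' is not substantiated, and the paper's own introductory example shows it cannot be taken on faith: for the SMA $\ca{A}\subseteq M_3$ with support $\Delta_3\cup\{(1,3),(2,3)\}$, the map sending $E_{11}\mapsto E_{33}$, $E_{22}\mapsto E_{22}$, $E_{33}\mapsto E_{33}$, $E_{13}\mapsto E_{13}$, $E_{23}\mapsto E_{23}$ preserves rank one, has rank-two matrices in its image (e.g.\ $E_{22}+E_{13}$), and is neither of the form \eqref{eq:AXB} nor degenerate (it is not even injective). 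So the existence of one rank-$\geq 2$ image element does not by itself rule out mixed behaviour; what rules it out for $\ca{T}_n$ is the density of the quasi-order $\{(i,j): i\le j\}$, and verifying that the admissible rank-one sums propagate a single type across \emph{all} rows and columns is the actual proof, not a routine check.

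Two smaller problems. First, your invertibility argument is wrong as stated: if $A$ or $B$ fails to be invertible it does not follow that $\min(r(A),r(B))\le 1$, so you cannot conclude that the image degenerates to rank $\le 1$. The correct route (as in the paper's Claim~1(c)) is that in case (I) the vectors $\{v_{1j}: 1\le j\le n\}$ must be linearly independent --- otherwise a nonzero rank-one matrix supported in the first row lands in $\ker\phi$ --- which makes $B$ invertible, and dually $\{u_{in}: 1\le i\le n\}$ handles $A$. Second, the cocycle step is fine for $\ca{T}_n$ (every transitive map on $\ca{T}_n$ separates as $s(i)/s(j)$ via $s(i)=1/\gamma_{1i}$), but note this too is special to $\ca{T}_n$: for general SMAs the paper needs the rectangle condition of Theorem \ref{thm:rank-one preserver}~(b) precisely because nontrivial transitive maps exist and need not preserve rank one.
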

	
	In particular, a linear rank-one preserver $\phi : \ca{T}_n \to M_n$ is not necessarily a rank preserver, as was the case for maps $\phi : M_n \to M_n$. However, this conclusion does follow if we additionally assume $\phi$ to be, for instance, injective or unital. Knowing this and the structure of Jordan embeddings of SMAs, our  next goal is to formulate a similar result regarding linear unital rank or rank-one preservers $\phi : \ca{A} \to M_n$ where $\ca{A} \subseteq M_n$ is an SMA. In the case of rank-one preservers, unitality indeed turned out to be an indispensable assumption, since without it they can behave rather strangely. For instance, consider the algebra 
	$$
	\ca{A}=\begin{bmatrix}
		* & 0 & * \\
		0 & * & * \\
		0 & 0 & *
	\end{bmatrix} \subseteq M_3
	$$
	and the map
	$$\phi : \ca{A} \to \ca{A}, \qquad \phi\left(\begin{bmatrix}
		x_{11} & 0 & x_{13} \\
		0 & x_{22} & x_{23} \\
		0 & 0 & x_{33}
	\end{bmatrix}\right) = \begin{bmatrix}
		0 & 0 & x_{13} \\
		0 & x_{22} & x_{23} \\
		0 & 0 & x_{11}+x_{33}
	\end{bmatrix}.$$
	Then $\phi$ preserves rank-one matrices, but $\phi(I)$ is a rank-two matrix. Hence this map does not fit into any of the conclusions of Theorem \ref{thm:rank-one preserver on Tn}. Assuming unitality, however, we obtained Theorem \ref{thm:rank-one preserver} (a), which shows that for an arbitrary SMA $\ca{A}\subseteq M_n$ all linear unital rank-one preservers $\phi : \ca{A} \to M_n$ are necessarily Jordan embeddings. While the converse fails in general (Remark \ref{rem:nontrivial transitive map}), in Theorem \ref{thm:rank-one preserver} (b) we managed to characterize when it does hold true by an additional algebraic condition.
	
	We finish the paper with Theorem \ref{thm:rank preserver} and Corollary \ref{cor:rank preserver without unitality}, which we consider to be the main results of the paper. They provide a complete description of linear rank preservers $ \phi : \ca{A} \to M_n$ (where $\ca{A}\subseteq M_n$ is an SMA) and place them in the broader context of Jordan embeddings which were described before. The proof of this result builds heavily on the rank-one result and hence fits in nicely with the first part of the paper.

	\subsection{Paper outline}
	
	This paper is organized as follows. We begin Section \S\ref{sec:preliminaries} by providing relevant terminology and notation. In Section \S\ref{sec:SMAs}
	we present the basic properties of SMAs, as well as our (preparatory) result regarding the intrinsic diagonalization within SMAs (Theorem \ref{thm:inner diagonalization on SMA}). In Section \S\ref{sec:MAMP}, after laying out some results from the literature concerning the (algebra) automorphisms of SMAs (done by Coelho and Akkurt et al.), we prove our first main result, Theorem \ref{thm:general form of phi}, and its consequences regarding Jordan embeddings between two SMAs, as well as their Jordan automorphisms (Corollaries \ref{cor:SMA Jordan embeddings} and \ref{cor:JAUT-SMA}). Section \S\ref{sec:rank-one preserver} presents an intermediary result, Theorem \ref{thm:rank-one preserver}, showing that all unital linear rank-one preservers on SMAs are necessarily Jordan embeddings. This section contains the bulk of the work of the entire paper, since it also lays the ground for the next section.  Finally, in Section \S\ref{sec:rank preserver} we prove our main result Theorem \ref{thm:rank preserver}, along with its immediate consequences.
	
	\section{Preliminaries}\label{sec:preliminaries}
	
	
	We begin this section by introducing some general notation and terminology. First of all, for every set $S$, by $\abs{S}$ we denote its cardinality. If $\rho \subseteq S\times S$ is a binary relation on a set $S$, for a fixed $x \in S$ by $\rho(x)$ and $\rho^{-1}(x)$ we denote its image and preimage by $\rho$, respectively, i.e.
	$$\rho(x)= \{y \in S : (x,y) \in \rho\}, \qquad \rho^{-1}(x)= \{y \in S : (y,x) \in \rho\}.$$
	Throughout this paper $\ca{A}$ will denote a unital complex algebra. By $\ca{A}^\times$ we denote the set of all invertible elements in $\ca{A}$. As usual, $Z(\ca{A})$ stands for the centre of $\ca{A}$. Further, by $\C[x]$ we denote the polynomial algebra in one variable $x$  over $\C$. For $a \in \ca{A}$, by $\C[a]$ we denote the unital subalgebra of $\ca{A}$ generated by $a$, i.e. 
	$$
	\C[a]=\{p(a) : p \in \C[x]\} \subseteq \ca{A}.
	$$ 
	Now let $n \in\N$.
	\begin{itemize}
		\item We denote by $\Delta_n$ the diagonal relation $\{(i,i) :1 \le i \le n\}$ on $\{1,\ldots,n\}$.
		\item As usual, by $M_n := M_n(\C)$ we denote the algebra of all $n\times n$ complex matrices and by $M_{m,n}$ ($m \in \N$) the space of all $m\times n$ complex matrices.
		\item $\ca{T}_n$ and $\ca{D}_n$ denote the subalgebras of all upper-triangular and diagonal matrices of $M_n$, respectively.
		\item  Following \cite{GogicPetekTomasevic}, for $p, k_1, \ldots, k_p \in \N$ such that $k_1+\cdots+k_p = n$,   $\ca{A}_{k_1, \ldots, k_p}$ denotes the corresponding \emph{block upper-triangular subalgebra} of $M_n$, i.e.
		\begin{equation}\label{eq:parabolicalg}
			\ca{A}_{k_1, \ldots, k_p}:=\begin{bmatrix} M_{k_1,k_1} & M_{k_1,k_2} & \cdots & M_{k_1,k_p} \\ 0 & M_{k_2,k_2} & \cdots & M_{k_2,k_p} \\
				\vdots & \vdots & \ddots & \vdots \\
				0 & 0 & \cdots & M_{k_p,k_p} \end{bmatrix}.
		\end{equation} 
		\item For $A,B \in M_n$ we denote their Jordan product by $A \circ B := AB + BA$. 
		\item For $A,B \in M_n$, by $A \leftrightarrow B$ we denote that
		$A$ and $B$ commute, i.e.\ $AB = BA$. The notion of a matrix commuting with a subset of $M_n$ will be denoted in the same way.
		\item For $A,B \in M_n$ we say that $A$ and $B$ are \emph{orthogonal} if $AB = BA = 0$.
		\item For $A \in M_n$, by $r(A)$ and $\sigma(A)$ we denote the rank and the spectrum of $A$, respectively.
		\item We denote by $\diag(\lambda_1,\ldots,\lambda_n) \in \ca{D}_n$ the diagonal matrix with diagonal entries equal to $\lambda_1,\ldots,\lambda_n\in \C$ (in this order).
		\item For $1 \le i,j \le n$ we denote by $E_{ij}\in M_n$ the standard matrix unit with $1$ at the position $(i,j)$ and $0$ elsewhere. Similarly, the canonical basis vectors of $\C^n$ are denoted by $e_1, \ldots, e_n$.
		\item For a subset $S$ of $\C^n$ or $M_n$, by $\spn S$ we denote its linear span.
		\item For vectors $u,v \in \C^n$ by $u \parallel v$ we denote  that the set $\{u,v\}$ is linearly dependent. The same notation is used for matrices.
		\item By $\inner{u}{v} = v^*u$ we denote the standard inner product of vectors $u,v \in \C^n$.
		\item For any permutation $\pi \in S_n$ (where, as usual, $S_n$ denotes the symmetric group), by \begin{equation}\label{eq:definition of permutation matrix}
			R_{\pi} := \sum_{k=1}^n E_{k\pi(k)}
		\end{equation} we denote the permutation matrix in $M_n$ associated to $\pi$.
	\end{itemize}
	
	As any matrix $A = [A_{ij}]_{i,j=1}^n \in M_n$ can be understood as a map $\{1,\ldots, n\}^2 \to \C, (i,j) \mapsto A_{ij}$, we can consider its \emph{support }$\supp A$ as the set of all indices $(i,j) \in \{1,\ldots, n\}^2$ such that $A_{ij} \ne 0$. Moreover, for a set $S \subseteq \{1,\ldots,n\}^2$ we also say that the matrix $A$ is \emph{supported in $S$} if $\supp A \subseteq S$.
	
	\smallskip
	
	We also state a few basic properties of general Jordan homomorphisms of complex algebras, proofs of which are elementary and can be found in \cite{JacobsonRickart}. 
	
	\begin{lemma}\label{le:Jordan homomorphism basic properties}
		Let $\phi : \ca{A} \to \ca{B}$ be a Jordan homomorphism between algebras $\ca{A}$ and $\ca{B}$. We have:
		\begin{enumerate}[(a)]
			\item $\phi(aba) = \phi(a)\phi(b)\phi(a)$  for all $a,b \in \ca{A}$.
			\item $\phi(abc + cba) = \phi(a)\phi(b)\phi(c) + \phi(c)\phi(b)\phi(a)$ for all $a,b,c \in \ca{A}$.
			\item For every idempotent $p \in \ca{A}$ and $a \in \ca{A}$ which commutes with $p$ we have $\phi(pa) = \phi(p)\phi(a) = \phi(a)\phi(p)$.
		\end{enumerate}
	\end{lemma}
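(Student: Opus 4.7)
The plan is to derive all three identities from the defining Jordan identity $\phi(xy+yx)=\phi(x)\phi(y)+\phi(y)\phi(x)$ together with the complex (hence $2$-torsion-free) linearity of $\phi$, proceeding in the order (a), (b), (c) so that later parts can invoke earlier ones.

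For (a), I would apply the Jordan identity to the two pairs $(x,y)=(ab+ba,\,a)$ and $(x,y)=(a^2,\,b)$. The first gives
$\phi\bigl((ab+ba)a+a(ab+ba)\bigr)=\phi(ab+ba)\phi(a)+\phi(a)\phi(ab+ba)$, whose left-hand side is $\phi(2aba+a^2b+ba^2)$ and whose right-hand side expands, using the Jordan identity on $(a,b)$ itself, to $2\phi(a)\phi(b)\phi(a)+\phi(a)^2\phi(b)+\phi(b)\phi(a)^2$. The second gives $\phi(a^2b+ba^2)=\phi(a)^2\phi(b)+\phi(b)\phi(a)^2$. Subtracting yields $\phi(2aba)=2\phi(a)\phi(b)\phi(a)$, and dividing by $2$ (allowed since we work over $\C$) gives (a).

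For (b), I would simply linearize (a): replace $a$ by $a+c$, expand both sides, and cancel the two ``diagonal'' terms $\phi(aba)=\phi(a)\phi(b)\phi(a)$ and $\phi(cbc)=\phi(c)\phi(b)\phi(c)$ that are guaranteed by (a). What remains is precisely $\phi(abc+cba)=\phi(a)\phi(b)\phi(c)+\phi(c)\phi(b)\phi(a)$.

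For (c), the starting observation is that if $p^2=p$ and $pa=ap$ then $pap=p^2a=pa$, so by (a) we already get $\phi(pa)=\phi(pap)=\phi(p)\phi(a)\phi(p)$. On the other hand, the Jordan identity applied to $(p,a)$ yields $2\phi(pa)=\phi(p)\phi(a)+\phi(a)\phi(p)$. Since $\phi(p^2)=\phi(p)$ forces $\phi(p)^2=\phi(p)$, multiplying this latter identity on the left by $\phi(p)$ and using $\phi(pa)=\phi(p)\phi(a)\phi(p)$ gives $\phi(p)\phi(a)=\phi(p)\phi(a)\phi(p)$; multiplying on the right by $\phi(p)$ gives $\phi(a)\phi(p)=\phi(p)\phi(a)\phi(p)$. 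Combining the two shows $\phi(pa)=\phi(p)\phi(a)=\phi(a)\phi(p)$, as required. The only mildly delicate step is (c), since one must resist concluding directly from $2\phi(pa)=\phi(p)\phi(a)+\phi(a)\phi(p)$ and must instead exploit the idempotency of $\phi(p)$ together with the sandwich identity from (a); the rest is essentially bookkeeping.
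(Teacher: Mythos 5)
Your proof is correct, and it is essentially the classical argument of Jacobson--Rickart to which the paper defers (the paper itself gives no proof of this lemma, citing \cite{JacobsonRickart} instead): part (a) by combining the Jordan identity on $(ab+ba,a)$ and $(a^2,b)$ and dividing by $2$, part (b) by linearizing (a), and part (c) by exploiting the idempotency of $\phi(p)$ together with the sandwich identity. All computations check out, so there is nothing to correct.
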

	As usual, by a Jordan embedding $\phi : \ca{A} \to \ca{B}$, we mean an injective Jordan homomorphism (i.e.\ a monomorphism).

	\section{Structural matrix algebras}\label{sec:SMAs}
	\subsection{Definition and basic properties}
	By a \emph{quasi-order} on $\{1,\ldots,n\}$ we mean a reflexive transitive relation $\rho$ on $\{1,\ldots,n\}$. For a quasi-order $\rho$ we define the subspace of $M_n$ by
	$$\ca{A}_\rho :=\{A \in M_n : \supp A \subseteq \rho\}=\spn\{E_{ij} : (i,j) \in \rho\}.$$
	It is easy to see that $\ca{A}_\rho$ is in fact a unital subalgebra of $M_n$. Following \cite{VanWyk1}, we refer to $\ca{A}_\rho$ as a \emph{structural matrix algebra (SMA) defined by the quasi-order $\rho$}. Throughout the paper we also use the following auxiliary relations: 
	$$
	\rho^\times:= \rho\setminus \Delta_n, \qquad \rho^t := \{(y,x) : (x,y) \in \rho\}.
	$$
	Obviously, $\rho^t$ is also a quasi-order on $\{1,\ldots,n\}$, which we refer to as the \emph{reverse quasi-order} of $\rho$.
	
	\smallskip 
	
	SMAs have a simple characterization within the class of all unital subalgebras of $M_n$:
	
	\begin{proposition}\label{prop:SMAschar}
		Let $\ca{A} \subseteq M_n$ be a unital subalgebra. The following statements are equivalent:
		\begin{enumerate}[(i)]
			\item $\ca{A}$ is an SMA.
			\item $\ca{A}$ contains all diagonal matrices.
			\item $\ca{A}$ contains a diagonal matrix with distinct diagonal entries.
		\end{enumerate}
	\end{proposition}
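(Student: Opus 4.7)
The implications (i)$\Rightarrow$(ii)$\Rightarrow$(iii) are essentially immediate: if $\ca{A} = \ca{A}_\rho$ for a quasi-order $\rho$, then by reflexivity $\Delta_n \subseteq \rho$, whence $E_{ii} \in \ca{A}$ for all $i$ and consequently all of $\ca{D}_n \subseteq \ca{A}$, giving (ii); and (ii)$\Rightarrow$(iii) follows by choosing $\diag(1,2,\ldots,n) \in \ca{D}_n \subseteq \ca{A}$.

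The substantive direction is (iii)$\Rightarrow$(i). The plan is to first extract the diagonal matrix units $E_{ii}$ as polynomials in the given $D = \diag(\lambda_1,\ldots,\lambda_n) \in \ca{A}$ with pairwise distinct $\lambda_i$, and then use them to peel off individual entries of arbitrary elements of $\ca{A}$. Concretely, by Lagrange interpolation there exist polynomials $p_i \in \C[x]$ with $p_i(\lambda_j) = \delta_{ij}$; since $\ca{A}$ is a unital subalgebra containing $D$, we have $\C[D] \subseteq \ca{A}$, and $p_i(D) = E_{ii}$. Hence $E_{ii} \in \ca{A}$ for every $i$.

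Next, I would introduce the candidate relation $\rho := \{(i,j) \in \{1,\ldots,n\}^2 : E_{ij} \in \ca{A}\}$ and verify it is a quasi-order. Reflexivity is immediate from the previous step. For transitivity, if $(i,j),(j,k) \in \rho$, then $E_{ij},E_{jk} \in \ca{A}$, and closure under multiplication gives $E_{ij}E_{jk} = E_{ik} \in \ca{A}$, so $(i,k) \in \rho$.

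Finally I would check $\ca{A} = \ca{A}_\rho$. The inclusion $\ca{A}_\rho \subseteq \ca{A}$ is clear by definition of $\rho$ and linearity. For $\ca{A} \subseteq \ca{A}_\rho$, take any $A = [A_{ij}] \in \ca{A}$ and observe $E_{ii} A E_{jj} = A_{ij} E_{ij} \in \ca{A}$; whenever $A_{ij} \neq 0$ this forces $E_{ij} \in \ca{A}$, i.e.\ $(i,j) \in \rho$. Thus $\supp A \subseteq \rho$ and $A \in \ca{A}_\rho$. No real obstacle arises here—the only mildly nontrivial step is the Lagrange interpolation trick that produces the $E_{ii}$ from a single diagonal matrix with distinct spectrum, and the rest is routine bookkeeping with matrix units.
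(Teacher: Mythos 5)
Your proof is correct and follows essentially the same route as the paper: both extract the diagonal matrix units (the paper via the degree-$n$ minimal polynomial of $D$ giving $\C[D]=\ca{D}_n$, you via Lagrange interpolation, which is the same fact) and both identify $\ca{A}=\ca{A}_\rho$ using the compression $E_{ii}AE_{jj}=A_{ij}E_{ij}$. The only cosmetic difference is that you go $(iii)\Rightarrow(i)$ directly rather than through $(ii)$.
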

	\begin{proof}
		The implications $(i) \implies (ii) \implies (iii)$ are trivial.
		
		\fbox{$(iii) \implies (ii)$} Suppose that $D \in \ca{D}_n \cap \ca{A}$ has distinct diagonal entries. Since the minimal polynomial of $D$ has degree $n$, it follows that the set $\{I,D, \ldots, D^{n-1}\} \subseteq \ca{A}$ is linearly independent, and hence a basis for $\ca{D}_n$.  Thus, $\ca{D}_n \subseteq \ca{A}$.
		
		\smallskip 
		
		\fbox{$(ii) \implies (i)$} Suppose that $\ca{D}_n \subseteq \ca{A}$. Define a subset $\rho \subseteq \{1,\ldots,n\}^2$ as
		$$\rho := \bigcup_{A \in \ca{A}} \supp A.$$
		One easily shows that $\rho$ is a quasi-order. We claim that $\ca{A}=\ca{A}_\rho$. Indeed, let $(i,j) \in \rho$ be arbitrary. By definition, there exists $A \in \ca{A}$ such that $A_{ij} \ne 0$. We have
		$$\ca{A} \ni E_{ii}AE_{jj} = A_{ij}E_{ij} \implies E_{ij} \in \ca{A}.$$
		We conclude that $\ca{A}_{\rho}\subseteq \ca{A}$. On the other hand, for any $A \in \ca{A}$ we have $\supp A \subseteq \rho$ and hence $A\in \ca{A}_{\rho}$. Therefore, $\ca{A} = \ca{A}_{\rho}$ is an SMA.
		
	\end{proof}
	
	Let $\rho, \rho' \subseteq \{1,\ldots,n\}^2$ be quasi-orders. A permutation $\pi \in S_n$ is said to be a \emph{$(\rho,\rho')$-increasing} if 
	$$
	(\pi(i), \pi(j)) \in \rho', \qquad \text{ for all } (i,j) \in \rho.
	$$
	Such a permutation $\pi$ gives rise  to an algebra embedding
	\begin{equation}\label{eq:conjugation by permutation}
		\ca{A}_\rho \to \ca{A}_{\rho'}, \qquad E_{ij} \mapsto E_{\pi(i)\pi(j)}, \qquad (i,j) \in \rho.
	\end{equation}
	This map can be expressed precisely as $R_{\pi}(\cdot)R_{\pi}^{-1}$ where $R_{\pi} \in M_n$ is the associated permutation matrix \eqref{eq:definition of permutation matrix}.
	Moreover, for any permutation $\pi \in S_n$, from \eqref{eq:conjugation by permutation} it is clear that the automorphism $R_{\pi}(\cdot)R_{\pi}^{-1}$ of $M_n$ restricts to an embedding $\ca{A}_\rho \to \ca{A}_{\rho'}$ if and only if $\pi$ is $(\rho,\rho')$-increasing.
	
	Furthermore, if $\abs{\rho} = \abs{\rho'}$, which is equivalent to $\dim \ca{A}_{\rho} = \dim \ca{A}_{\rho'}$, a $(\rho,\rho')$-increasing permutation $\pi \in S_n$ is in fact a \emph{quasi-order isomorphism}, i.e.
	$$(i,j)\in \rho \iff (\pi(i),\pi(j))\in \rho', \qquad \text{ for all }1\leq i,j \leq n.$$
	Indeed, the above is equivalent to $R_{\pi} \ca{A}_{\rho} R_{\pi}^{-1} = \ca{A}_{\rho'}$, which follows from $R_{\pi} \ca{A}_{\rho} R_{\pi}^{-1} \subseteq \ca{A}_{\rho'}$ and the equality of dimensions.
	
	\smallskip
	
	\noindent Next, given a quasi-order $\rho$, we define the equivalence relation $\overline{\rho}$ on the set $\{1,\ldots, n\}$ as
	$$(i,j) \in \overline{\rho} \stackrel{\text{def}}\iff (i,j),(j,i) \in \rho.$$
	We provide an explicit argument for the following fact, which can be deduced from {\cite[p.\ 432]{Akkurt}}.
	
	\begin{lemma}\label{le:Akkurt permutation argument}
		Let $\mathcal{A}_\rho \subseteq M_n$ be an SMA. There exists a permutation $\pi \in S_n$ such that
		\begin{equation}\label{eq:Akkurt permutation argument}
			R_{\pi} \ca{A}_\rho R_{\pi}^{-1}=\begin{bmatrix} M_{m_1,m_1} & M_{m_1,m_2}(\mathbb{K}) & \cdots & M_{m_1,m_p}(\mathbb{K}) \\ 0 & M_{m_2,m_2} & \cdots & M_{m_2,m_p}(\mathbb{K}) \\
				\vdots & \vdots & \ddots & \vdots \\
				0 & 0 & \cdots & M_{m_p,m_p} \end{bmatrix}
		\end{equation}
		for some $p, m_1,\ldots,m_p \in \N$ such that $m_1+\cdots+m_p = n$, where for any $1\leq i < j \leq n$, $M_{m_i,m_j}(\mathbb{K})$ is either zero or $M_{m_i,m_j}$. 
	\end{lemma}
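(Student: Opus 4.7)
The plan is to quotient by the symmetric part of $\rho$ and then apply a topological sort. Define the equivalence relation $\overline{\rho}$ as in the paper and let $C_1, \ldots, C_p$ be its equivalence classes, with $m_k := |C_k|$. Using reflexivity and transitivity of $\rho$, the relation $\preceq$ on the set of equivalence classes, given by $C \preceq C'$ iff $(i,j) \in \rho$ for some $i \in C, j \in C'$, is well defined (the choice of representatives does not matter), reflexive and transitive; antisymmetry follows immediately from the definition of $\overline{\rho}$. Hence $\preceq$ is a genuine partial order on the (finite) quotient, so a linear extension exists. After relabeling, we may assume $C_a \preceq C_b \Rightarrow a \le b$ for all $1 \le a, b \le p$.

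Next I build the permutation. Set $B_a := \{m_1 + \cdots + m_{a-1} + 1, \ldots, m_1 + \cdots + m_a\}$ for $1 \le a \le p$, and choose any $\pi \in S_n$ such that $\pi(C_a) = B_a$ for each $a$ (within each block the bijection is arbitrary). By the discussion preceding the lemma, $R_\pi \mathcal{A}_\rho R_\pi^{-1} = \mathcal{A}_{\rho'}$ where $\rho' = \{(\pi(i), \pi(j)) : (i,j) \in \rho\}$, so I only need to verify that $\rho'$ has the desired block upper-triangular shape with respect to the partition $\{1,\ldots,n\} = B_1 \sqcup \cdots \sqcup B_p$.

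The verification splits into three cases according to which blocks the indices $k, l$ belong to. If both lie in the same $B_a$, writing $k = \pi(i), l = \pi(j)$ with $i, j \in C_a$, the very definition of $\overline{\rho}$ gives $(i,j), (j,i) \in \rho$, so both $(k,l), (l,k)$ belong to $\rho'$; this fills the $(a,a)$-diagonal block with $M_{m_a, m_a}$. If $k \in B_a, l \in B_b$ with $a < b$, the membership $(k,l) \in \rho'$ depends only on whether $C_a \preceq C_b$ (independent of the chosen representatives, again by transitivity of $\rho$), so the entire $(a,b)$-block is either $M_{m_a, m_b}$ or zero. Finally, if $a > b$, then $C_a \not\preceq C_b$ by our topological ordering (otherwise $C_a \preceq C_b$ and $b \le a$ would force, after the antisymmetry of $\preceq$, that $a = b$, a contradiction), and the $(a,b)$-block vanishes.

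There is no conceptual obstacle; the only care needed is bookkeeping around the direction of the permutation (it is the partition of the codomain indices $\{1,\ldots,n\}$ that must be compatible with the linear extension on the classes, not the domain) and verifying that $\preceq$ is genuinely well defined on classes, which is where reflexivity and transitivity of $\rho$ are essential.
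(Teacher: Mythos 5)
Your proof is correct and follows essentially the same route as the paper: quotient by $\overline{\rho}$, check that $\preceq$ is a well-defined partial order on the classes, topologically sort, and conjugate by the permutation that sends each class to a consecutive block of indices. The case analysis verifying the block structure (full diagonal blocks from the definition of $\overline{\rho}$, off-diagonal blocks determined solely by whether $C_a \preceq C_b$, lower blocks vanishing by the choice of linear extension) matches the paper's argument.
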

	\begin{proof}
		On the quotient set $\{1,\ldots,n\}/\overline{\rho}$ we define the relation $\preceq$ in the following way: for $1 \le i,j \le n$ the corresponding $\overline{\rho}$-classes satisfy 
		$$
		[i]_{\overline{\rho}} \preceq [j]_{\overline{\rho}} \stackrel{\text{def}}{\iff}  (i,j) \in \rho.
		$$ Note that $\preceq$ is well-defined. Indeed, suppose that $i' \in [i]_{\overline{\rho}}$ and $j' \in [j]_{\overline{\rho}}$ are arbitrary. Then we claim that $$(i,j) \in \rho \iff (i',j') \in \rho.$$
		Supposing the former, by definition of $\overline{\rho}$ we obtain
		$$(i',i),(i,j),(j,j') \in \rho \implies (i',j') \in \rho.$$
		The converse is similar. Further, one easily checks  that $\preceq$ is in fact a partial order on the quotient set $\{1,\ldots,n\}/\overline{\rho}$. Now let $$\{1,\ldots,n\}/\overline{\rho} = \{[r_1]_{\overline{\rho}}, \ldots, [r_p]_{\overline{\rho}}\}, \qquad r_1, \ldots, r_p \in \{1,\ldots,n\}$$
		be an ordering of the quotient set which respects the partial order $\preceq$ in the sense that $[r_i]_{\overline{\rho}} \preceq [r_j]_{\overline{\rho}}$ implies $i \le j$ (its existence can be easily shown by an inductive argument).
		
		For each $1 \le k \le p$ denote the elements of the  class $[r_k]_{\overline{\rho}}$ explicitly as
		$$[r_k]_{\overline{\rho}} = \{r_{k,1}, \ldots, r_{k,m_k}\}$$
		for some $m_k \in \N$. Define a permutation $\pi \in S_n$ by
		$$\pi(r_{k,j}) := m_1+\cdots + m_{k-1} + j, \qquad \text{ for }1 \le k \le p \text{ and }1 \le j \le m_k$$
		(here we formally set $m_0 := 0$). Define a new quasi-order $\rho'$ on $\{1,\ldots,n\}$ with the following condition:
		$$(i,j) \in \rho' \stackrel{\text{def}}\iff (\pi^{-1}(i), \pi^{-1}(j)) \in \rho.$$
		The permutation $\pi$ is $(\rho,\rho')$-increasing by definition, so clearly   $\ca{A}_{\rho'}=R_{\pi} \ca{A}_\rho R_{\pi}^{-1}$ (as $|\rho'|=|\rho|$). We now prove \eqref{eq:Akkurt permutation argument}, which is equivalent to 
		\begin{equation}\label{eq:Akkurt permutation argument2}
			\diag(M_{m_1}, \ldots, M_{m_p}) \subseteq \mathcal{A}_{\rho'} \subseteq \mathcal{A}_{m_1,\ldots,m_p},
		\end{equation}
		where $\mathcal{A}_{m_1,\ldots,m_p}$ is the block upper-triangular algebra (as defined in \eqref{eq:parabolicalg}).
		Let $1 \le k,l \le p$ be arbitrary and let $$(i,j) \in (m_1+\cdots+m_{k-1}, m_1+\cdots+m_{k}] \times (m_1+\cdots+m_{l-1}, m_1+\cdots+m_{l}]$$
		be arbitrary. We have
		$$(i,j) \in \rho' \iff (\underbrace{\pi^{-1}(i)}_{\in [r_k]_{\overline{\rho}}}, \underbrace{\pi^{-1}(j)}_{\in [r_l]_{\overline{\rho}}}) \in \rho \iff [r_k]_{\overline{\rho}} \preceq [r_l]_{\overline{\rho}}.$$
		The latter depends only on $k$ and $l$, implies $k \le l$, and is certainly true when $k=l$. This establishes both inclusions in \eqref{eq:Akkurt permutation argument2}.
		
	\end{proof}
	
	\begin{remark}\label{re:central decomposition}
		Let $\rho$ be a quasi-order on $\{1,\ldots,n\}$. On the same set $\{1,\ldots,n\}$, we define a new relation $\mathop{\tripprox_0} := \rho \cup \rho^t$. Explicitly:
		$$i \tripprox_0 j \iff ((i,j) \in \rho \,\text{ or }\, (j,i) \in \rho).$$
		Let $\tripprox$ be the transitive closure of $\tripprox_0$. Then $\tripprox$ is an equivalence relation on $\{1,\ldots,n\}$ and one easily checks that the centre of the SMA $\ca{A}_\rho$ is given by
		\begin{align*}
			Z(\ca{A}_\rho) &= \{\diag(\lambda_1,\ldots,\lambda_n) \in \ca{D}_n : \lambda_i = \lambda_j \text{ for all }(i,j) \in \rho^\times\}\\
			&= \{\diag(\lambda_1,\ldots,\lambda_n) \in \ca{D}_n : (\forall 1 \le i,j \le n)(i \tripprox j \implies \lambda_i = \lambda_j)\}.
		\end{align*}
		Indeed, let $D \in Z(\ca{A}_\rho)$. In particular $D \leftrightarrow \ca{D}_n$, which implies $D \in \ca{D}_n$,  If we denote  $D = \diag(\lambda_1,\ldots,\lambda_n)$, we have
		\begin{align*}
			i \tripprox_0 j &\implies (i,j) \in \rho \text{ or } (j,i) \in \rho \implies D \leftrightarrow E_{ij} \text{ or } D \leftrightarrow E_{ji}\\
			&\implies \lambda_i = \lambda_j. 
		\end{align*}
		By transitivity of $\tripprox$ we conclude  $i \tripprox j \implies \lambda_i = \lambda_j$  as well.
		
		Conversely, suppose that $D = \diag(\lambda_1,\ldots,\lambda_n) \in \ca{D}_n$ is constant on equivalence classes of $\tripprox$. We claim that $D \leftrightarrow \ca{A}_\rho$. Clearly $D \leftrightarrow \ca{D}_n$. Let $(i,j) \in \rho^\times$ be arbitrary. In particular we have $i \tripprox_0 j$ so $\lambda_i = \lambda_j$, which in turn implies $D \leftrightarrow E_{ij}$. We conclude that $D \leftrightarrow \spn\{E_{ij} : (i,j) \in \rho\}=\mathcal{A}_\rho$ which finishes the proof.
		
		\smallskip
		
		Denote by $\ca{Q}$ (or $\ca{Q}_\rho$) the quotient set of the equivalence relation $\tripprox$, i.e.
		\begin{equation}\label{eq:Qkvocx}
			\ca{Q} := \{1,\ldots,n\}/\tripprox.
		\end{equation}
		Clearly, we have $\abs{\ca{Q}} = \dim Z(\ca{A}_\rho)$. For any equivalence class $C \in \ca{Q}$ we define an idempotent
		$$P_C := \sum_{i \in C} E_{ii} \in \ca{D}_n.$$
		Obviously, $\Tr P_C = \abs{C}$ and $P_C \in Z(\ca{A}_\rho)$. In fact $(P_C)_{C \in \ca{Q}}$ is a basis for $Z(\ca{A}_\rho)$. Furthermore, by definition of the quotient set, it follows that $(P_C)_{C \in \ca{Q}}$ is a mutually orthogonal family and $\sum_{C \in \ca{Q}} P_C = I$.
		
		\smallskip
		
		Next, for each $C \in \ca{Q}$ denote by $\pi_C : C \to \{1,\ldots,\abs{C}\}$ the unique strictly increasing bijection and consider the quasi-order $$\rho_C := \{(\pi_C(i),\pi_C(j)) : (i,j) \in (C\times C) \cap \rho\}$$
		on $\{1,\ldots,\abs{C}\}$.
		Then $\ca{A}_{{\rho}_C} \subseteq M_{\abs{C}}$ is an SMA which is easily shown to be obtained from $\ca{A}_{\rho}$ by deleting all rows and columns not in $C$. Therefore, $\ca{A}_{{\rho}_C} \cong P_C \ca{A}_\rho$. Furthermore, each $\ca{A}_{{\rho}_C}$ is a central SMA (i.e. $Z(\ca{A}_{{\rho}_C})$ consists only of the scalar multiples of the identity) and there exists an algebra isomorphism
		$$\ca{A}_\rho \cong \bigoplus_{C \in \ca{Q}} \ca{A}_{{\rho}_C}.$$
		We refer to this fact as the \emph{central decomposition} of $\ca{A}_{\rho}$.
	\end{remark}
	
	\subsection{Intrinsic diagonalization}
	
	We now state our first preparatory result, regarding the intrinsic diagonalization of matrices within SMAs. Besides showcasing an interesting property of these algebras, it will be applied in an essential way in the description of Jordan embeddings between SMAs, as well as their Jordan automorphisms (Corollaries \ref{cor:SMA Jordan embeddings} and \ref{cor:JAUT-SMA}, respectively).

	\begin{theorem}\label{thm:inner diagonalization on SMA}
		Let $\ca{A}=\ca{A}_\rho \subseteq M_n$ be an SMA and let $\ca{F} \subseteq \ca{A}$ be a commuting family of diagonalizable matrices. Then there exists $S \in \ca{A}^\times$ such that $S^{-1}\ca{F}S \subseteq \ca{D}_n $.
	\end{theorem}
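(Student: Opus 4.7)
The plan is to reduce $\ca{A}$ to the block upper-triangular normal form provided by Lemma~\ref{le:Akkurt permutation argument}, then successively clean up the diagonal blocks of the family, and finally construct the desired similarity $S$ column by column from common eigenvectors with prescribed support.

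First, by Lemma~\ref{le:Akkurt permutation argument} there is a permutation $\pi \in S_n$ such that $R_\pi \ca{A} R_\pi^{-1}$ has the block form \eqref{eq:Akkurt permutation argument}. Since conjugation by $R_\pi$ sends $\ca{D}_n$ onto itself, after replacing $\ca{A}$ and $\ca{F}$ by their $R_\pi$-conjugates I may assume $\ca{A}$ itself has this form, with block sizes $m_1,\ldots,m_p$ and block subspaces $V_1,\ldots,V_p \subseteq \C^n$. Every $A \in \ca{F}$ is then block upper-triangular, and its $k$-th diagonal block $A_{kk}$ is the induced action of $A$ on the quotient $(V_1 \oplus \cdots \oplus V_k)/(V_1 \oplus \cdots \oplus V_{k-1})$; hence $A_{kk}$ is diagonalizable, as its minimal polynomial divides that of $A$. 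Applying the classical simultaneous diagonalization to each commuting family $\{A_{kk} : A \in \ca{F}\} \subseteq M_{m_k}$ and assembling the resulting similarities into a block-diagonal matrix $T = \diag(T_1,\ldots,T_p) \in \ca{A}^\times$, after replacing $\ca{F}$ by $T^{-1}\ca{F}T$ I may further assume every $A \in \ca{F}$ is upper-triangular in the ordinary sense, with diagonal $(\lambda_1(A),\ldots,\lambda_n(A))$.

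For the final step, for each $l \in \{1,\ldots,n\}$ let $k(l)$ denote the index of the block containing $l$, and set
\[
W_l := \ca{A}e_l = \spn\{e_m : (m,l) \in \rho\}, \qquad N_l := \spn\{e_m \in W_l : m \in V_1 \cup \cdots \cup V_{k(l)-1}\}.
\]
Transitivity of $\rho$ yields that both $W_l$ and $N_l$ are $\ca{F}$-invariant, and $\ca{F}|_{W_l}$ is a commuting family of diagonalizable operators, hence simultaneously diagonalizable on $W_l$. Since $Ae_l \equiv \lambda_l(A)e_l \pmod{N_l}$, the class $[e_l] \in W_l/N_l$ is a common eigenvector with character $\lambda_l$, which lifts to a common eigenvector $v_l = e_l + w_l \in W_l$ with $w_l \in N_l$ and $Av_l = \lambda_l(A)v_l$ for all $A \in \ca{F}$. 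The matrix $S$ with columns $v_1,\ldots,v_n$ satisfies $S \in \ca{A}$ (column $l$ lies in $W_l$), and since each $w_l$ is supported in blocks strictly preceding $V_{k(l)}$, the $V_k$-diagonal block of $S$ equals $I_{m_k}$ for every $k$. Therefore $S$ is block upper-triangular with identity diagonal blocks, so $S \in \ca{A}^\times$, and $S^{-1}AS = \diag(\lambda_1(A),\ldots,\lambda_n(A)) \in \ca{D}_n$ for every $A \in \ca{F}$.

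The main obstacle is the lifting step in the last paragraph: producing a common eigenvector $v_l \in W_l$ of character $\lambda_l$ whose class modulo $N_l$ equals $[e_l]$. This rests on the decomposition $W_l = \bigoplus_\chi W_l^\chi$ into simultaneous eigenspaces restricting to $N_l = \bigoplus_\chi (N_l \cap W_l^\chi)$ (a consequence of the invariance of $N_l$ together with the diagonalizability of $\ca{F}|_{W_l}$), combined with the careful choice of $N_l$ so that its structure is compatible with the quasi-order $\rho$ and so that both $W_l/N_l$ detects the character $\lambda_l$ and the lifted eigenvector automatically has column support inside $\ca{A}$.
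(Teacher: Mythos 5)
Your proof is correct, and after the common first reduction it takes a genuinely different route from the paper's. Both arguments begin by conjugating by the permutation matrix from Lemma~\ref{le:Akkurt permutation argument} to place $\ca{A}$ in the block upper-triangular normal form. From there the paper proceeds via Schur triangularization of the diagonal blocks to reduce to an SMA inside $\ca{T}_n$, and then diagonalizes by passing to the spectral idempotents of the matrices in $\ca{F}$ (Remark~\ref{re:spectral projections}) and invoking an explicit formula (Lemma~\ref{le:simultaneous diagonalization of idempotents}) for a $T\in\ca{A}^\times$ conjugating a resolution of the identity into $\ca{D}_n$. You instead fully diagonalize (not merely triangularize) the diagonal blocks using the classical simultaneous diagonalization theorem --- the observation that each $A_{kk}$ is diagonalizable because its minimal polynomial divides that of $A$ is exactly what is needed --- and then build $S$ column by column: each column $v_l=e_l+w_l$ is a joint eigenvector of $\ca{F}$ obtained by lifting the class $[e_l]$ from $W_l/N_l$, where $W_l=\ca{A}e_l$ and $N_l=W_l\cap(V_1\oplus\cdots\oplus V_{k(l)-1})$ are $\ca{F}$-invariant (for $N_l$ one uses both transitivity of $\rho$ and the block upper-triangular containment, which you have after the reduction). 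The lifting step is sound: since $\ca{F}|_{W_l}$ is simultaneously diagonalizable and $N_l$ is invariant, $N_l$ splits as the direct sum of its intersections with the joint eigenspaces of $W_l$, so the component of $e_l$ in the $\lambda_l$-eigenspace differs from $e_l$ by an element of $N_l$; the support constraints ($v_l\in W_l$, $w_l$ supported in earlier blocks) give $S\in\ca{A}$ with identity diagonal blocks, hence $S\in\ca{A}^\times$ (invertibility in $M_n$ of an element of a unital subalgebra already forces the inverse to lie in the subalgebra, via Cayley--Hamilton). What the paper's route buys is a reusable intermediate statement about intrinsically diagonalizing families of orthogonal idempotents, which it cites again later; what your route buys is a shorter, more conceptual endgame that avoids idempotents entirely and produces the conjugating matrix explicitly as a matrix of common eigenvectors with prescribed support.
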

	
	
	Before proving Theorem \ref{thm:inner diagonalization on SMA} we consider a special case when $\ca{A}$ is contained in $\ca{T}_n$ and the family $\ca{F}$ consists of mutually orthogonal idempotents.  
	
	\begin{lemma}\label{le:simultaneous diagonalization of idempotents}
		Suppose that $\ca{P} \subseteq \ca{T}_n$ is a family of mutually orthogonal nonzero idempotents such that $\sum_{P \in \ca{P}} P = I$. Define a matrix $T \in \ca{T}_n^\times$ with $T_{ij}:= P_{ij}$ $(1 \le i \le j \le n)$, where $P \in \ca{P}$ is the unique idempotent such that $P_{jj}=1$.  
		\begin{itemize}
			\item[(a)] We have $T^{-1}\ca{P}T \subseteq \ca{D}_n$.  
			\item[(b)] If in addition $\ca{P} \subseteq \ca{A}_{\rho}$ for some SMA $\ca{A}_{\rho} \subseteq \ca{T}_n$, then $T \in \ca{A}_{\rho}^\times$.
		\end{itemize}
	\end{lemma}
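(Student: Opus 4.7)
The plan is as follows. First I would check that $T$ is well-defined. For any idempotent $P \in \ca{T}_n$, the diagonal entries satisfy $P_{jj}^2 = P_{jj}$, hence $P_{jj} \in \{0,1\}$; moreover, for any $P,Q \in \ca{T}_n$ one has $(PQ)_{jj} = P_{jj}Q_{jj}$, so orthogonality of the family $\ca{P}$ forbids two distinct members from having a $1$ in the same diagonal slot. Combined with $\sum_{P \in \ca{P}} P = I$, this shows that for each $j$ there is \emph{exactly one} $P_j \in \ca{P}$ with $(P_j)_{jj}=1$. In particular $T_{jj}=1$ for all $j$, so $T \in \ca{T}_n^\times$.

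For part (a), the crucial observation is that the $j$-th column of $T$ is literally the $j$-th column of $P_j$, i.e. $Te_j = P_j e_j$ (both are zero below row $j$, and in rows $i \le j$ the entry is $(P_j)_{ij}$ by definition). Consequently, for any $P \in \ca{P}$,
\[
PTe_j \;=\; PP_j e_j \;=\;
\begin{cases}
P^2 e_j = Pe_j = P_j e_j = Te_j, & P = P_j, \\
0, & P \neq P_j,
\end{cases}
\]
where I used $P^2 = P$ in the first case and orthogonality $PP_j = 0$ in the second. In both cases $PTe_j = P_{jj}\,Te_j$, since $P_{jj} = 1$ if and only if $P = P_j$. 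Assembling these column equalities yields $PT = TD_P$, where $D_P := \sum_{j=1}^n P_{jj} E_{jj} \in \ca{D}_n$; equivalently $T^{-1}PT = D_P$, proving $T^{-1}\ca{P}T \subseteq \ca{D}_n$.

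For part (b), I would argue directly from the construction: every column of $T$ coincides with the corresponding column of some $P \in \ca{P} \subseteq \ca{A}_\rho$, so $\supp T \subseteq \bigcup_{P \in \ca{P}} \supp P \subseteq \rho$, whence $T \in \ca{A}_\rho$. To conclude $T \in \ca{A}_\rho^\times$, I invoke the nilpotency of the strictly upper triangular matrix $T - I$: the Neumann-style expansion $T^{-1} = \sum_{k=0}^{n-1}(-1)^k (T-I)^k$ terminates and expresses $T^{-1}$ as a polynomial in $T$, so $T^{-1} \in \ca{A}_\rho$.

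Conceptually the lemma is close to a bookkeeping fact, so I do not anticipate a genuine obstacle; the only subtle point is the identification of the $j$-th column of $T$ with that of the uniquely associated $P_j$, on which the entire computation pivots. Once this columnwise viewpoint is adopted, (a) reduces to idempotency and orthogonality, and (b) to unipotent invertibility inside a unital subalgebra.
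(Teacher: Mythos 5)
Your proof is correct and follows essentially the same route as the paper's: both reduce (a) to the identity $PT = TD_P$ (with $D_P$ the diagonal of $P$) proved via idempotency and orthogonality, and both obtain (b) from $\supp T \subseteq \bigcup_{P \in \ca{P}} \supp P \subseteq \rho$. Your columnwise observation $Te_j = P_j e_j$ merely repackages the paper's entrywise computation of $(PT)_{ij}$ and $(TD_P)_{ij}$, and your added checks (uniqueness of $P_j$, invertibility of $T^{-1}$ inside $\ca{A}_\rho$ via nilpotency of $T-I$) are valid details the paper leaves implicit.
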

	\begin{proof}
		\phantom{a}
		\begin{itemize}
			\item[(a)] Fix $P \in \ca{P}$ and let $D_P\in \ca{D}_n$ be the diagonal of $P$. Fix $1 \le i \le j \le n$ and let $Q \in \ca{P}$ be the unique idempotent such that $Q_{jj}=1$. We have
			$$(TD_P)_{ij} = \sum_{i \le k \le j} T_{ik}(D_P)_{kj} = T_{ij}P_{jj} = \begin{cases}
				P_{ij}, &\quad\text{ if } P_{jj} = 1,\\
				0, &\quad\text{ otherwise}.
			\end{cases}$$
			On the other hand, we have
			\begin{align*}
				(PT)_{ij} &= \sum_{i \le k \le j} P_{ik}T_{kj} = \sum_{i \le k < j} P_{ik}T_{kj} + P_{ij}T_{jj} = \sum_{i \le k < j} P_{ik}Q_{kj} + P_{ij}\\
				&= (PQ)_{ij} - P_{ij}Q_{jj} + P_{ij} = (PQ)_{ij} = \begin{cases}
					P_{ij}, &\quad\text{ if } P_{jj} = 1,\\
					0, &\quad\text{ otherwise}.
				\end{cases}
			\end{align*}
			This closes the proof of $TD_P = PT$. 
			\item[(b)] The statement follows directly from
			$$\supp T \subseteq \bigcup_{P \in \ca{P}} \supp P \subseteq \rho.$$
		\end{itemize}
	\end{proof}

	\begin{remark}\label{re:spectral projections}
		Let $A \in M_n$ be a diagonalizable matrix. By an elementary linear algebra argument, there exist a unique family $\{P_\lambda : \lambda \in \sigma(A)\}$ of mutually orthogonal nonzero idempotents (spectral idempotents of $A$) such that
		$$A = \sum_{\lambda \in \sigma(A)} \lambda P_\lambda, \qquad \sum_{\lambda \in \sigma(A)} P_\lambda = I.$$
		For each $\lambda \in \sigma(A)$ let $p_\lambda \in \C[x]$ be the unique polynomial of degree $< \abs{\sigma(A)}$ such that
		$$p_\lambda(x)= \begin{cases}
			1, &\quad\text{ if }x = \lambda,\\
			0, &\quad\text{ if }x \in \sigma(A)\setminus\{\lambda\}.
		\end{cases}$$
		
		Then it is well-known (and easy to verify) that $p_\lambda(A) = P_\lambda$ for all $\lambda \in \sigma(A)$.  In particular, all spectral projections of a diagonalizable matrix $A\in M_n$ are contained in the unital subalgebra $\C[A]$ of $M_n$ (generated by $A$).  Therefore, any unital (Jordan) subalgebra $\ca{A}$ of $M_n$  containing $A$ also contains all spectral projections of $A$.
	\end{remark}
	
	\begin{proof}[Proof of Theorem \ref{thm:inner diagonalization on SMA}]
		
		By an elementary argument, it suffices to prove the statement when $\ca{F}$ is finite (otherwise we conduct the argument on the basis of the linear span of $\ca{F}$, which is again a finite commuting subset of $\ca{A}$ consisting of diagonalizable matrices). For simplicity, we can further assume that $|\ca{F}|=2$, so that $\ca{F} = \{A,B\}$, as the general case follows analogously.
		
		\begin{case}\label{cl:inner diagonalization SMA contained in Tn}
			We prove the statement when $\ca{A} \subseteq \ca{T}_n$.
		\end{case}
		Let $\{P_\lambda : \lambda \in \sigma(A)\}$ and $\{Q_\mu : \mu \in \sigma(B)\}$ be the families of spectral projections of $A$ and $B$, respectively. By Remark \ref{re:spectral projections}, both families are in fact contained in $\ca{A}$.
		
		\smallskip
		
		Since $A \leftrightarrow B$, we have $P_\lambda \leftrightarrow Q_\mu$ for all $\lambda\in \sigma(A)$ and $\mu \in \sigma(B)$ (also by Remark \ref{re:spectral projections}). Using this, it is easy to show that $$\ca{P} := \{P_\lambda Q_\mu : \lambda\in \sigma(A), \mu \in \sigma(B)\}\setminus \{0\} \subseteq \ca{A}$$
		is a family of mutually orthogonal nonzero idempotents such that $\sum_{R \in \ca{P}} R = I$. Also, both $A$ and $B$ are linear combinations of elements of $\ca{P}$. We can apply Lemma \ref{le:simultaneous diagonalization of idempotents} to $\ca{P}$ to obtain $T \in \ca{A}^\times$ which diagonalizes the entire family $\ca{P}$, and consequently $A$ and $B$ as well.
		
		\begin{case}\label{cl:inner diagonalization SMA contained in parabolic subalgebra}
			We prove the statement when $$\diag(M_{k_1}, \ldots, M_{k_p}) \subseteq \ca{A} \subseteq \ca{A}_{k_1, \ldots, k_p}$$ (where $\ca{A}_{k_1, \ldots, k_p}$ is the corresponding block upper-triangular algebra, as defined by \eqref{eq:parabolicalg}).
		\end{case}
		Denote by  $
		X_1,Y_1 \in M_{k_1},\ldots, X_p,Y_p \in M_{k_p}$ the diagonal blocks of $A$ and $B$, respectively. Since $A \leftrightarrow B$, for each $1 \le j \le p$ we have $X_j \leftrightarrow Y_j$ so by the Schur triangularization (see e.g.\ \cite[Theorem 40.5]{Prasolov}) we can choose $U_j \in M_{k_j}^\times$ such that $U_j X_j U_j^{-1},  U_j Y_j U_j^{-1}\in \ca{T}_{k_j}$. Then $$U := \diag(U_1,\ldots, U_p) \in \diag(M_{k_1}, \ldots, M_{k_p}) \subseteq \ca{A}$$
		and 
		$$U AU^{-1}, U BU^{-1} \in \ca{A} \cap \ca{T}_n.$$
		It follows that $\{UAU^{-1}, UBU^{-1}\}$ is a commuting family of diagonalizable matrices, so we can apply Case \ref{cl:inner diagonalization SMA contained in Tn} on the SMA $\ca{A} \cap \ca{T}_n$ to obtain $S \in (\ca{A} \cap \ca{T}_n)^\times$ such that
		$$UAU^{-1}, UBU^{-1} \in S\ca{D}_n S^{-1}$$
		or equivalently $A,B \in R\ca{D}_n R^{-1}$, where $R:=U^{-1}S \in \ca{A}^\times$. 
		
		\begin{case}
			Now assume that $\ca{A}$ is a general SMA.
		\end{case}
		
		From Lemma \ref{le:Akkurt permutation argument} we know that there exists a permutation matrix $R \in M_n^\times$ and a block upper-triangular subalgebra $\ca{A}_{k_1, \ldots, k_p} \subseteq M_n$ such that the SMA $R \ca{A} R^{-1}$ satisfies $$\diag(M_{k_1}, \ldots, M_{k_p}) \subseteq R \ca{A} R^{-1} \subseteq \ca{A}_{k_1, \ldots, k_p}.$$
		
		Since $A,B \in \ca{A}$, we have $R A R^{-1},R B R^{-1} \in R \ca{A} R^{-1}$ so by Case \ref{cl:inner diagonalization SMA contained in parabolic subalgebra}, there exists $S \in (R \ca{A} R^{-1})^\times = R \ca{A}^\times R^{-1}$ such that $$R A R^{-1},R B R^{-1} \in S\ca{D}_nS^{-1}.$$
		It follows that $$A,B \in (R^{-1}SR) R^{-1}\ca{D}_n R(R^{-1}SR)^{-1},$$
		where $R^{-1}SR \in \ca{A}^\times$. The proof is now complete.
	\end{proof}
	
	\begin{remark}
		Not all unital subalgebras $\ca{A} \subseteq M_n$ possess the intrinsic diagonalization property in the sense of Theorem \ref{thm:inner diagonalization on SMA}. For instance, fix an arbitrary diagonalizable matrix $T \in M_n\setminus \ca{D}_n$ and let $\ca{A} :=\C[T] \subseteq M_n$. Then $\ca{A}$ is a unital commutative subalgebra of $M_n$ (its dimension is equal to the degree of the minimal polynomial of $T$), so trivially a matrix $A \in \ca{A}$ is of the form $A = SDS^{-1}$ for some $S \in \ca{A}^\times$ and $D \in \ca{D}_n$  if and only if $A$ itself is diagonal. However, this is clearly not true for the matrix $T \in \ca{A}$.
		
		\smallskip
		
		Furthermore, let
		$$T:= \begin{bmatrix}
			0 & 0 & 1 \\ 0 & 1 & 0 \\ 0 & 0 & 2 
		\end{bmatrix} \in \ca{T}_3$$
		and
		$$\ca{A} :=\C[T]=\left\{\begin{bmatrix}
			x & 0 & \frac12(z-x) \\ 0 & y & 0 \\ 0 & 0 & z 
		\end{bmatrix} : x,y,z \in \C\right\} \subseteq \ca{T}_3.$$
		One easily verifies that the algebra $\ca{A}$ is conjugate to $\ca{D}_3$ (e.g.\ $\ca{A} = (T+E_{11})\ca{D}_3 (T+E_{11})^{-1}$). In particular, the property of being conjugated to an SMA is not  sufficient to ensure the intrinsic diagonalization in the sense of Theorem \ref{thm:inner diagonalization on SMA}.
	\end{remark}
	
	\section{Jordan embeddings of structural matrix algebras}\label{sec:MAMP}
	
	\subsection{Algebra embeddings}
	Let $\rho$ be a  quasi-order on $\{1,\ldots,n\}$. Following \cite{Coelho}, we say that a map $g : \rho \to \C^\times$ is \emph{transitive} if 
	$$g(i,j)g(j,k) = g(i,k),  \qquad \text{ for all } (i,j), (j,k) \in \rho.$$
	Note that necessarily $g|_{\Delta_n} \equiv 1$, so it suffices to verify the above condition for $(i,j), (j,k) \in \rho^\times$. We say that a transitive map $g : \rho \to \C^\times$ is \emph{trivial} if there exists a map $s : \{1,\ldots,n\} \to \C^\times$ such that $g$ \emph{separates through $s$}, that is
	$$g(i,j) = \frac{s(i)}{s(j)}, \qquad \text{ for all }(i,j) \in \rho$$
	(again, it suffices to stipulate this for all $(i,j) \in \rho^\times$). Every transitive map $g$ induces an (algebra) automorphism $g^* : \ca{A}_\rho \to \ca{A}_\rho$ given on the basis of  matrix units as
	$$g^*(E_{ij}) = g(i,j)E_{ij}, \qquad \text{ for all }(i,j) \in\rho.$$
	Triviality of a transitive map reflects on the induced automorphism in a natural way:
	\begin{lemma}{\cite[Lemma 4.10]{Coelho}}\label{le:g trivial iff induced map is inner automorphism}
		Let $g : \rho \to \C^\times$ be a transitive map. Then $g$ is trivial if and only if there exists $T \in M_n^\times$ such that the induced automorphism $g^* : \ca{A}_\rho \to \ca{A}_\rho$ is of the form $g^*(\cdot) = T(\cdot) T^{-1}$.  In this case, $T$ is in fact a diagonal matrix, so that  $g^*$ is an inner automorphism of $\ca{A}_\rho$.
	\end{lemma}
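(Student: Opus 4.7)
The plan is to prove both directions by direct computation with matrix units, using the diagonal matrix of separating values as the implementing conjugator. The only real subtlety is showing that in the converse direction the implementing $T$ is automatically diagonal; once this is in hand, the trivializing function for $g$ is read off from the diagonal entries of $T$.

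For the forward direction, assume $g$ separates through some $s : \{1,\ldots,n\} \to \C^\times$ and set $T := \diag(s(1),\ldots,s(n)) \in \ca{D}_n^\times \subseteq \ca{A}_\rho^\times$. For each $(i,j) \in \rho$ the elementary identity $TE_{ij}T^{-1} = (s(i)/s(j))\,E_{ij} = g(i,j)\,E_{ij} = g^*(E_{ij})$ shows that $T(\cdot)T^{-1}$ and $g^*$ agree on the matrix unit basis of $\ca{A}_\rho$, hence on all of $\ca{A}_\rho$. Since $T$ is diagonal, $g^*$ is even inner within $\ca{A}_\rho$ itself.

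For the converse, assume $g^*(\cdot) = T(\cdot)T^{-1}$ for some $T \in M_n^\times$. As noted just after the definition of transitivity, we have $g(i,i) = 1$ for every $i$, so $g^*(E_{ii}) = E_{ii}$ and therefore $T$ commutes with every diagonal matrix unit $E_{ii}$. An immediate entrywise comparison of $TE_{ii}$ and $E_{ii}T$ (the former having only the $i$-th column of $T$ surviving, the latter only the $i$-th row) forces all off-diagonal entries of $T$ to vanish, so $T \in \ca{D}_n$. Writing $T = \diag(t_1,\ldots,t_n)$ with each $t_i \in \C^\times$ and applying the conjugation relation to $E_{ij}$ for $(i,j) \in \rho$ gives $g(i,j)\,E_{ij} = (t_i/t_j)\,E_{ij}$, so $g$ is trivial with separating function $s(i) := t_i$. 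The diagonality of $T$ simultaneously establishes the final assertion of the lemma, with no genuine obstacle beyond the elementary commutation argument.
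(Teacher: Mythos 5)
Your proof is correct and complete. Note that the paper itself gives no argument for this lemma --- it is quoted directly from Coelho (\cite[Lemma 4.10]{Coelho}) --- so there is no in-paper proof to compare against; your self-contained version supplies exactly the expected elementary argument. Both directions check out: the forward direction is the standard computation $\diag(s(1),\ldots,s(n))\,E_{ij}\,\diag(s(1),\ldots,s(n))^{-1} = \tfrac{s(i)}{s(j)}E_{ij}$ on the spanning set of matrix units, and in the converse the key observation that $g|_{\Delta_n}\equiv 1$ forces $T$ to commute with every $E_{ii}$ (hence to be diagonal, by the entrywise comparison you give) is precisely what makes the separating function $s(i):=t_i$ well defined and simultaneously yields the final assertion that $T\in\ca{D}_n^\times\subseteq\ca{A}_\rho^\times$, so $g^*$ is inner in $\ca{A}_\rho$.
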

	
	We now state the aforementioned description of all automorphisms of SMAs, which was first obtained by Coelho in \cite[Theorem C]{Coelho} and later streamlined by Akkurt et al. as:
	
	\begin{theorem}{\cite[Theorem 2.2 (Factorization Theorem)]{Akkurt}}\label{thm:description of Aut(A_rho)}
		Let $\ca{A}_\rho\subseteq M_n$ be an SMA. A map $\phi : \ca{A}_\rho \to \ca{A}_\rho$ is an algebra automorphism if and only if there exists an invertible matrix $T \in \ca{A}_\rho^\times$, a transitive map $g : \rho \to \C^\times$ and a $(\rho, \rho)$-increasing permutation $\pi \in S_n$ such that
		$$\phi(\cdot) = (TR_{\pi}) g^*(\cdot)(TR_{\pi})^{-1}.$$
	\end{theorem}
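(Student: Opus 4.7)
The ``if'' direction is routine: a transitive $g$ yields an algebra automorphism $g^*$ by definition; a $(\rho,\rho)$-increasing permutation $\pi$ gives an algebra embedding $R_\pi(\cdot)R_\pi^{-1}: \ca{A}_\rho \to \ca{A}_\rho$ via \eqref{eq:conjugation by permutation}, which is surjective (hence an automorphism) by the dimension-matching observation immediately following that equation; and conjugation by $T\in\ca{A}_\rho^\times$ is an inner automorphism of $\ca{A}_\rho$. Composing the three gives a map of the required form.

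For the ``only if'' direction, suppose $\phi$ is an algebra automorphism of $\ca{A}_\rho$. The plan is: first diagonalize the images $\phi(E_{ii})$ by an inner automorphism of $\ca{A}_\rho$, then identify the resulting permutation $\pi$ of $\{1,\ldots,n\}$, and finally recover the remaining twist as an automorphism induced by a transitive map. The key tool is Theorem \ref{thm:inner diagonalization on SMA}. The family $\{\phi(E_{ii})\}_{i=1}^n$ consists of pairwise orthogonal nonzero idempotents in $\ca{A}_\rho$ summing to $I$; in particular it is a commuting family of diagonalizable matrices. Thus Theorem \ref{thm:inner diagonalization on SMA} furnishes $T\in \ca{A}_\rho^\times$ with $T^{-1}\phi(E_{ii})T\in\ca{D}_n$ for every $i$. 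Each $T^{-1}\phi(E_{ii})T$ is then a $0$--$1$ diagonal matrix; as there are $n$ of them, pairwise orthogonal, nonzero, and summing to $I$, a trace count forces $T^{-1}\phi(E_{ii})T = E_{\pi(i)\pi(i)}$ for a unique permutation $\pi \in S_n$.

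Set $\widetilde{\phi}(X):=T^{-1}\phi(X)T$, still an automorphism of $\ca{A}_\rho$. For any $(i,j)\in\rho$, the identity $E_{ij}=E_{ii}E_{ij}E_{jj}$ together with multiplicativity of $\widetilde{\phi}$ yields $\widetilde{\phi}(E_{ij}) = E_{\pi(i)\pi(i)}\widetilde{\phi}(E_{ij})E_{\pi(j)\pi(j)}\in \C E_{\pi(i)\pi(j)}$, so $\widetilde{\phi}(E_{ij})=g(i,j)E_{\pi(i)\pi(j)}$ for some scalar $g(i,j)$. Injectivity of $\widetilde{\phi}$ gives $g(i,j)\in\C^\times$, and membership in $\ca{A}_\rho$ forces $(\pi(i),\pi(j))\in\rho$, so $\pi$ is $(\rho,\rho)$-increasing. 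Applying $\widetilde{\phi}$ to $E_{ij}E_{jk}=E_{ik}$ for composable pairs $(i,j),(j,k)\in\rho$ produces the transitivity relation $g(i,j)g(j,k)=g(i,k)$. Rewriting $g(i,j)E_{\pi(i)\pi(j)}=R_\pi\, g^*(E_{ij})\, R_\pi^{-1}$ and extending linearly gives $\widetilde{\phi}=R_\pi\, g^*(\cdot)\, R_\pi^{-1}$, whence $\phi=(TR_\pi)\, g^*(\cdot)\, (TR_\pi)^{-1}$.

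The crux of the argument is the first step: applying Theorem \ref{thm:inner diagonalization on SMA} to diagonalize the family $\{\phi(E_{ii})\}$ \emph{within} $\ca{A}_\rho$, rather than only inside $M_n^\times$. Without this intrinsic diagonalization, the similarity $T$ would live in $M_n^\times$ but not in $\ca{A}_\rho^\times$, and the conjugation could not be absorbed into an inner automorphism of $\ca{A}_\rho$, so the claimed factorization would break.
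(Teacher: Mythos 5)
Your argument is correct, and it is essentially the method the paper itself uses: the paper quotes this theorem from \cite{Akkurt} without proof, but its proof of the generalization, Corollary \ref{cor:SMA embedding} (which recovers the theorem on taking $\rho'=\rho$), proceeds exactly as you do --- diagonalize the orthogonal idempotents $\phi(E_{ii})$ intrinsically via Theorem \ref{thm:inner diagonalization on SMA} (or Lemma \ref{le:simultaneous diagonalization of idempotents}) to extract $T$ and $\pi$, read off the scalars $g(i,j)$ from $E_{ij}=E_{ii}E_{ij}E_{jj}$, and verify transitivity of $g$ and that $\pi$ is $(\rho,\rho)$-increasing. Your closing remark about why the diagonalizing similarity must be taken in $\ca{A}_\rho^\times$ rather than merely in $M_n^\times$ matches the role this intrinsic diagonalization plays throughout the paper.
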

	\begin{remark}
		In Theorem \ref{thm:description of Aut(A_rho)}, the three parameters $T, g, \pi$  which represent an automorphism $\phi : \ca{A}_\rho \to \ca{A}_\rho$ are in general not unique. In fact, there is a significant degree of redundancy, which is clear from Coelho's group theoretic formulation of this result (\cite[Theorem C]{Coelho}). To illustrate the issue, consider the quasi-order 
		$$\rho := \Delta_3 \cup \{(1,2),(1,3),(2,3),(3,2)\}$$
		and the corresponding SMA 
		$$\ca{A}_\rho = \begin{bmatrix}
			* & * & * \\
			0 & * & * \\
			0 & * & *
		\end{bmatrix}.$$
		Let $\phi= \id : \ca{A}_\rho \to \ca{A}_\rho$ be the trivial automorphism. We have 
		$$\phi(\cdot) = (T_1 R_{\pi_1}) g_1^*(\cdot) (T_1 R_{\pi_1})^{-1} = (T_2 R_{\pi_2}) g_2^*(\cdot) (T_2 R_{\pi_2})^{-1},$$
		where
		\begin{itemize}
			\item $\pi_1$ is the identity, $T_1 = I$ and $g_1 : \rho \to \C^\times$ is the constant map $1$.
			\item $\pi_2$ is the transposition $2 \leftrightarrow 3$,  $T_2 = \begin{bmatrix}
				\frac12 & 0 & 0 \\
				0 & 0 & 1 \\
				0 & 1 & 0
			\end{bmatrix}$ and $g_2 : \rho \to \C^\times$ is given by
			$$g_2(i,j) = \begin{cases}
				2, &\text{ if }(i,j) \in \{(1,2),(1,3)\},\\
				1, &\text{ if }(i,j) \in \rho\setminus\{(1,2),(1,3)\}.
			\end{cases}$$
		\end{itemize}    
	\end{remark}
	
	We explicitly state the following simple generalization of Theorem \ref{thm:description of Aut(A_rho)}, as it motivates our later result regarding Jordan embeddings between SMAs (Corollary \ref{cor:SMA Jordan embeddings}).
	\setcounter{case}{0}
	\begin{corollary}\label{cor:SMA embedding}
		Let $\ca{A}_\rho, \ca{A}_{\rho'} \subseteq M_n$ be SMAs. Then $\ca{A}_{\rho}$ embeds (as an algebra) into $\ca{A}_{\rho'}$ if and only if there exists a $(\rho,\rho')$-increasing permutation $\pi \in S_n$.
		Furthermore, if $\phi : \ca{A}_\rho \to \ca{A}_{\rho'}$ is an algebra embedding, then there exists an invertible matrix $T \in \ca{A}_{\rho'}^\times$, a $(\rho,\rho')$-increasing permutation $\pi \in S_n$ and a transitive map $g : \rho \to \C^\times$ such that
		$$\phi(\cdot) = (T R_{\pi})  g^*(\cdot) (T R_{\pi})^{-1}.$$
	\end{corollary}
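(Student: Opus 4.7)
The first assertion's ``if'' direction is essentially given in the paper: a $(\rho,\rho')$-increasing permutation $\pi \in S_n$ produces an algebra embedding $\ca{A}_\rho \to \ca{A}_{\rho'}$ via $R_\pi(\cdot)R_\pi^{-1}$ by \eqref{eq:conjugation by permutation}. For the ``only if'' direction together with the structural form of $\phi$, the plan is to first conjugate $\phi$ by a well-chosen element of $\ca{A}_{\rho'}^\times$ so that it acts transparently on matrix units, and then read off the permutation $\pi$ and the transitive map $g$. The only nontrivial input will be Theorem \ref{thm:inner diagonalization on SMA} applied inside $\ca{A}_{\rho'}$; the rest is bookkeeping on matrix units.

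Given a unital algebra embedding $\phi : \ca{A}_\rho \to \ca{A}_{\rho'}$, the image $\phi(\ca{D}_n)$ is a commuting family of diagonalizable matrices in $\ca{A}_{\rho'}$, so Theorem \ref{thm:inner diagonalization on SMA} produces $S \in \ca{A}_{\rho'}^\times$ with $S^{-1}\phi(\ca{D}_n)S \subseteq \ca{D}_n$. Then $\{S^{-1}\phi(E_{ii})S\}_{i=1}^n$ is a family of $n$ nonzero, pairwise orthogonal diagonal idempotents summing to $I$, so each must equal $E_{\pi(i)\pi(i)}$ for a unique permutation $\pi \in S_n$. For $(i,j) \in \rho^\times$, transferring the relations $E_{ii}E_{ij}=E_{ij}=E_{ij}E_{jj}$ via $\phi$ and conjugation by $S$ forces $S^{-1}\phi(E_{ij})S$ to be supported only at the entry $(\pi(i),\pi(j))$, so
$$S^{-1}\phi(E_{ij})S = g(i,j)\, E_{\pi(i)\pi(j)}$$
for some $g(i,j)\in\C^\times$ (nonzero by injectivity of $\phi$). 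Since this matrix lies in $\ca{A}_{\rho'}$, we obtain $(\pi(i),\pi(j)) \in \rho'$, so $\pi$ is $(\rho,\rho')$-increasing; this already yields the ``only if'' direction of the first assertion.

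For the explicit form, applying $\phi$ and conjugation by $S$ to $E_{ij}E_{jk}=E_{ik}$ (with $(i,j),(j,k) \in \rho$) gives $g(i,j)g(j,k)=g(i,k)$, proving transitivity of $g$ (with $g \equiv 1$ on $\Delta_n$). Using $R_\pi E_{ij} R_\pi^{-1} = E_{\pi(i)\pi(j)}$, we recognize $S^{-1}\phi(E_{ij})S = R_\pi g^*(E_{ij}) R_\pi^{-1}$ on the matrix-unit basis of $\ca{A}_\rho$, hence by linearity $\phi(\cdot) = (SR_\pi) g^*(\cdot)(SR_\pi)^{-1}$, which is the claimed form with $T := S$. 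The principal obstacle is isolated in the very first step: we must diagonalize $\phi(\ca{D}_n)$ by an element of $\ca{A}_{\rho'}$ rather than of $M_n$, and it is precisely Theorem \ref{thm:inner diagonalization on SMA} which permits this and hence ensures $T \in \ca{A}_{\rho'}^\times$.
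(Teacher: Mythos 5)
Your proposal is correct and follows essentially the same route as the paper: both use Theorem \ref{thm:inner diagonalization on SMA} (via the diagonal idempotents $\phi(E_{ii})$) to produce $S\in\ca{A}_{\rho'}^\times$ and the permutation $\pi$, then pin down $\phi(E_{ij})$ by multiplicativity through $E_{ii}E_{ij}E_{jj}$ and read off the transitive map $g$. The only (immaterial) difference is that you deduce that $\pi$ is $(\rho,\rho')$-increasing entrywise from $S^{-1}\phi(E_{ij})S\in\ca{A}_{\rho'}$, whereas the paper concludes $R_\pi\ca{A}_\rho R_\pi^{-1}\subseteq\ca{A}_{\rho'}$ from the surjectivity of $g^*$.
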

	\begin{proof}
		If there exists a $(\rho,\rho')$-increasing permutation $\pi \in S_n$ then, following the discussion around \eqref{eq:conjugation by permutation}, the map  $R_\pi(\cdot)R_{\pi}^{-1}$ defines the algebra embedding $\ca{A}_\rho \to \ca{A}_{\rho'}$. 
		
		\smallskip
		
		Conversely, assume that $\phi : \ca{A}_\rho \to \ca{A}_{\rho'}$ is an algebra embedding. Note that $$\{\phi(E_{11}), \ldots, \phi(E_{nn})\} \subseteq \ca{A}_{\rho'}$$ is a set of mutually orthogonal idempotents so by Theorem \ref{thm:inner diagonalization on SMA} (in fact, Lemma \ref{le:simultaneous diagonalization of idempotents} suffices) there exists $S \in \ca{A}_{\rho'}^\times$ and a permutation $\pi \in S_n$ such that $$\phi(E_{ii}) = SE_{\pi(i)\pi(i)}S^{-1} = (SR_\pi)E_{ii}(SR_{\pi})^{-1}, \qquad \text{ for all } 1 \le i \le n.$$
		For each $(i,j) \in \rho^\times$ we have
		$$\phi(E_{ij}) = \phi(E_{ii}E_{ij}E_{jj}) = ((SR_\pi)E_{ii}(SR_{\pi})^{-1})\phi(E_{ij})((SR_\pi)E_{jj}(SR_{\pi})^{-1})$$
		and hence $\phi(E_{ij}) = (SR_\pi)(g(i,j)E_{ij})(SR_{\pi})^{-1}$ for some nonzero scalar $g(i,j) \in \C^\times$. Multiplicativity of $\phi$ directly implies that the map
		$$
		g : \rho \to \C^\times, \qquad (i,j)  \mapsto \begin{cases} g(i,j), &\quad\text{ if } (i,j) \in \rho^\times, \\
			1, &\quad\text{ if }  i=j
		\end{cases}    
		$$
		is transitive. We conclude $\phi = (SR_\pi)g^*(\cdot)(SR_{\pi})^{-1}$. For all $X \in \mathcal{A}_\rho$ we have
		$$R_{\pi} g^*(X) R_{\pi}^{-1} = S^{-1}\phi(X)S \in \mathcal{A}_{\rho'}.$$
		By the surjectivity of $g^*$, it follows that $R_{\pi} \ca{A}_\rho R_{\pi}^{-1} \subseteq \ca{A}_{\rho'}$ so $\pi$ is $(\rho,\rho')$-increasing.
	\end{proof}
	
	To end this subsection, we also mention Coelho's characterization of SMAs which admit only inner automorphisms.
	\begin{theorem}{\cite[Theorem D]{Coelho}}\label{thm:when are all automorphism inner?}
		Let $\ca{A}_\rho \subseteq M_n$ be an SMA. Then every automorphism $\ca{A}_\rho \to \ca{A}_\rho$ is inner if and only if both of the following hold:
		\begin{enumerate}[(a)]
			\item every transitive map $g : \rho \to \C^\times$ is trivial,
			\item every quasi-order automorphism of $\rho$ fixes the equivalence classes of $\overline{\rho}$.
		\end{enumerate}
	\end{theorem}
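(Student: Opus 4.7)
The plan is to leverage the Factorization Theorem \ref{thm:description of Aut(A_rho)}: every automorphism $\phi$ of $\ca{A}_\rho$ admits a decomposition $\phi(\cdot) = (TR_\pi) g^*(\cdot) (TR_\pi)^{-1}$ with $T \in \ca{A}_\rho^\times$, $g : \rho \to \C^\times$ transitive, and $\pi$ a $(\rho,\rho)$-increasing permutation (in fact a quasi-order automorphism of $\rho$, as the discussion after Corollary \ref{cor:SMA embedding} shows). The question of when $\phi$ is inner then reduces to asking when the conjugating element $TR_\pi$ can, up to composition with $g^*$, be adjusted to lie in $\ca{A}_\rho^\times$. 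The two conditions (a) and (b) will turn out to govern the $g^*$ and the $R_\pi$ parts separately.

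For the ``only if'' direction, I would plug in two natural families of automorphisms. First, any transitive $g$ itself defines an automorphism $g^*$ (taking $T = I$ and $\pi = \id$ in the factorization); if $g^* = S(\cdot)S^{-1}$ with $S \in \ca{A}_\rho^\times$, then since $g^*$ fixes $\ca{D}_n$ pointwise, $S$ commutes with $\ca{D}_n$, so $S \in \ca{D}_n$, and Lemma \ref{le:g trivial iff induced map is inner automorphism} forces $g$ to be trivial, establishing (a). Second, for any quasi-order automorphism $\sigma$ of $\rho$, the map $R_\sigma(\cdot) R_\sigma^{-1}$ restricts to an automorphism of $\ca{A}_\rho$. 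Writing it as $S(\cdot)S^{-1}$ with $S \in \ca{A}_\rho^\times$ shows that $S^{-1} R_\sigma$ lies in the commutant of $\ca{A}_\rho$ in $M_n$. A short calculation (commuting with $\ca{D}_n$ forces diagonality, then commuting with each $E_{ij}$, $(i,j) \in \rho^\times$, equates the appropriate diagonal entries) identifies this commutant with $Z(\ca{A}_\rho) \subseteq \ca{D}_n$ via Remark \ref{re:central decomposition}. Therefore $R_\sigma \in \ca{A}_\rho$; the same argument applied to $\sigma^{-1}$ yields $R_{\sigma^{-1}} \in \ca{A}_\rho$, which together force $(i, \sigma(i)), (\sigma(i), i) \in \rho$ for every $i$, i.e. $\sigma$ setwise preserves every $\overline{\rho}$-class, which is (b).

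For the ``if'' direction, start with $\phi = (TR_\pi) g^*(\cdot) (TR_\pi)^{-1}$. Condition (a) together with Lemma \ref{le:g trivial iff induced map is inner automorphism} yields an invertible diagonal $D$ with $g^* = D(\cdot)D^{-1}$, so $\phi = (TR_\pi D)(\cdot)(TR_\pi D)^{-1}$. Condition (b) applied to $\pi$ says that $\pi$ setwise fixes every $\overline{\rho}$-class $C$; hence $(i, \pi(i)) \in \overline{\rho} \subseteq \rho$ for every $i$, so $\supp R_\pi \subseteq \rho$ and $R_\pi \in \ca{A}_\rho$. The same holds for $\pi^{-1}$, so $R_\pi \in \ca{A}_\rho^\times$. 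Combined with $T \in \ca{A}_\rho^\times$ and $D \in \ca{D}_n \subseteq \ca{A}_\rho^\times$, this gives $TR_\pi D \in \ca{A}_\rho^\times$, so $\phi$ is inner.

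The principal subtlety is the commutant calculation in the only-if direction: the argument hinges on the fact that, for an SMA, commuting with all of $\ca{A}_\rho$ inside the ambient $M_n$ is no stronger than commuting with $\ca{A}_\rho$ from within it, both yielding precisely $Z(\ca{A}_\rho)$. Once this coincidence is in hand, (a) and (b) fall out as the independent obstructions to triviality of $g^*$ and to $R_\pi$ belonging to $\ca{A}_\rho$, respectively.
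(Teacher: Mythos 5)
The paper does not prove this statement at all --- it is quoted verbatim from Coelho \cite[Theorem D]{Coelho} --- so there is no internal proof to compare against; your argument has to stand on its own, and it does. Deriving the theorem from the Factorization Theorem \ref{thm:description of Aut(A_rho)} is the natural route (and is essentially how Coelho obtains it from his Theorem C). All the key steps check out: the commutant of $\ca{A}_\rho$ in $M_n$ is indeed $Z(\ca{A}_\rho)$ (commuting with $\ca{D}_n$ forces diagonality, and commuting with $E_{ij}$ for $(i,j)\in\rho^\times$ equates the $i$-th and $j$-th diagonal entries, matching Remark \ref{re:central decomposition}); applying the argument to both $\sigma$ and $\sigma^{-1}$ is genuinely necessary, since $\supp R_\sigma\subseteq\rho$ alone only gives $(k,\sigma(k))\in\rho$ and $\ca{A}_\rho$ is not transpose-closed, and you correctly do this to land in $\overline{\rho}$; and in the converse direction you correctly upgrade the $(\rho,\rho)$-increasing $\pi$ to a quasi-order automorphism via the dimension-count observation (which appears in Section 3, just after \eqref{eq:conjugation by permutation}, rather than after Corollary \ref{cor:SMA embedding} --- a harmless misattribution). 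Two cosmetic remarks: in establishing (a) you do not need to first argue that $S$ is diagonal, since Lemma \ref{le:g trivial iff induced map is inner automorphism} already concludes triviality of $g$ from the existence of \emph{any} $T\in M_n^\times$ implementing $g^*$; and it is worth saying explicitly that conjugation by $R_\sigma$ is surjective onto $\ca{A}_\rho$ (it permutes the matrix units indexed by $\rho$) so that the inner-ness hypothesis applies to it.
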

	
	\subsection{Jordan embeddings}
	Now we are ready to describe the general form of Jordan embeddings $\phi:\mathcal{A}\to \mathcal{B}$ between SMAs $\ca{A}$ and $\ca{B}$ of $M_n$.
	As expected, the transitive maps will play a role of similar importance in the description of all Jordan embeddings and rank(-one) preservers $\ca{A} \to M_n$. Their appearance displays the relative complexity of (Jordan) algebraic and preserver theory on SMAs when compared to $M_n$, $\ca{T}_n$ or the block upper-triangular subalgebras (see \cite{GogicPetekTomasevic}). On the other hand, permutation matrices appear only when we restrict the codomain to $\ca{B}$, i.e.\ in the description of Jordan embeddings $\ca{A} \to \ca{B}$.
	
	\smallskip
	
	We first start with the special case when $\mathcal{B}=M_n$.
	
	\begin{lemma}\label{le:preserves diagonalizable}
		Let $\ca{A}_\rho \subseteq M_n$ be an SMA and let $\phi : \ca{A}_\rho \to M_n$ be a Jordan homomorphism such that $\phi(E_{ij}) \ne 0$ for all $(i,j)\in \rho$. Then there exists an invertible matrix $S\in M_n^\times$ such that $\phi(D) = SDS^{-1}$ for all $D \in \ca{D}_n$.
	\end{lemma}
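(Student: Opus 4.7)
The plan is to reduce everything to showing that $\phi(E_{11}),\ldots,\phi(E_{nn})$ is a family of $n$ mutually orthogonal rank-one idempotents in $M_n$. Once this is established, a single change of basis built from eigenvectors simultaneously diagonalizes them to $E_{11},\ldots,E_{nn}$, producing the required $S$; extending by linearity gives the claim on all of $\ca{D}_n$.

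First I would argue that $\{\phi(E_{ii})\}_{i=1}^n$ consists of mutually orthogonal idempotents. Each $\phi(E_{ii})$ is an idempotent, since $E_{ii}^2 = E_{ii}$ and $\phi$ is a Jordan homomorphism. For $i\ne j$, the relation $E_{ii}\circ E_{jj}=0$ gives $\phi(E_{ii})\phi(E_{jj}) + \phi(E_{jj})\phi(E_{ii}) = 0$; multiplying this identity on the left and separately on the right by $\phi(E_{ii})$ and comparing the two results (using $\phi(E_{ii})^2=\phi(E_{ii})$) yields $\phi(E_{ii})\phi(E_{jj}) = \phi(E_{jj})\phi(E_{ii})$, which together with the anticommutation forces $\phi(E_{ii})\phi(E_{jj}) = 0$ (we are in characteristic zero).

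Next comes the key rank-count. By hypothesis, each $\phi(E_{ii})$ is a nonzero idempotent, hence has rank at least $1$. Since $\phi(I) = \sum_i \phi(E_{ii})$ is itself an idempotent (so of rank at most $n$), and mutual orthogonality makes the ranks add, we obtain
$$n \;\le\; \sum_{i=1}^n r(\phi(E_{ii})) \;=\; r(\phi(I)) \;\le\; n.$$
Consequently each $\phi(E_{ii})$ is a rank-one idempotent and $\phi(I) = I$.

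Finally, choosing a nonzero vector $v_i$ in the range of $\phi(E_{ii})$ for each $i$, the family $v_1,\ldots,v_n$ is a basis of $\C^n$: the ranges of the mutually orthogonal $\phi(E_{ii})$ are independent subspaces whose sum equals the range of $\phi(I)=I$. Setting $S:=[v_1\mid\cdots\mid v_n]\in M_n^\times$, the identities $\phi(E_{ii})v_j=\delta_{ij}v_j$ give $\phi(E_{ii})=SE_{ii}S^{-1}$ for every $i$, and $\phi(D)=SDS^{-1}$ for all $D\in\ca{D}_n$ follows by linearity. The one substantive step is the rank-counting observation; the rest is routine idempotent bookkeeping, and I expect no serious obstacle. (Note that the full hypothesis $\phi(E_{ij})\ne 0$ for all off-diagonal $(i,j)\in\rho$ is not actually needed for this lemma; only the diagonal vanishing fails.)
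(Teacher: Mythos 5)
Your proof is correct and follows essentially the same route as the paper's: show that $\phi(E_{11}),\ldots,\phi(E_{nn})$ are mutually orthogonal nonzero idempotents, deduce that they are simultaneously similar to $E_{11},\ldots,E_{nn}$, and extend by linearity. The only difference is that you spell out the orthogonality derivation and the rank count (each idempotent has rank one, $\phi(I)=I$) that the paper leaves implicit behind a citation of Lemma~\ref{le:Jordan homomorphism basic properties}~(c) and a ``therefore''; your closing observation that only the non-vanishing of the diagonal images is used is also accurate.
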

	\begin{proof}
		By Lemma \ref{le:Jordan homomorphism basic properties} (c) we conclude that $\phi(E_{11}),\ldots, \phi(E_{nn})$ is a family of mutually orthogonal nonzero idempotents. Therefore, there exists $S \in M_n^\times$ such that $$\phi(E_{kk}) = SE_{kk}S^{-1}, \qquad 1 \le k \le n.$$
		The claim follows by linearity.
	\end{proof}
	
	\begin{lemma}\label{le:Eij or Eji}
		Let $\ca{A}_\rho \subseteq M_n$ be an SMA and let $\phi : \ca{A}_\rho \to M_n$ be a Jordan homomorphism such that  $\phi(E_{ij}) \ne 0$ for all $(i,j)\in \rho$ and $\phi|_{\ca{D}_n}$ is the identity. Then for every $(i,j) \in \rho^\times$ there exist scalars $\alpha_{ij}, \beta_{ij} \in \C$, exactly one of which is zero, such that $\phi(E_{ij}) = \alpha_{ij}E_{ij} + \beta_{ij}E_{ji}.$
	\end{lemma}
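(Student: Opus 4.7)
The plan is to fix $(i,j) \in \rho^\times$, set $X := \phi(E_{ij}) \in M_n$, and probe $X$ against the diagonal matrix units $E_{kk}$ using the Jordan relation. Since $\ca{A}_\rho$ is an SMA, all $E_{kk}$ lie in $\ca{A}_\rho$ and, by assumption, $\phi(E_{kk}) = E_{kk}$. For every $k \notin \{i,j\}$ we have $E_{kk}E_{ij} + E_{ij}E_{kk} = 0$, so applying $\phi$ gives $E_{kk}X + XE_{kk} = 0$. Reading off entries, this forces the $k$-th row and $k$-th column of $X$ to vanish, as well as $X_{kk} = 0$. Doing this for every $k \notin \{i,j\}$ confines $X$ to the $2\times 2$ block indexed by $\{i,j\}$, that is,
$$X = a E_{ii} + b E_{ij} + c E_{ji} + d E_{jj}$$
for some $a,b,c,d \in \C$.

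Next I would apply the Jordan relation to $E_{ii}$ and $E_{ij}$: since $E_{ii}E_{ij} + E_{ij}E_{ii} = E_{ij}$, we obtain $E_{ii}X + XE_{ii} = X$. Comparing the $(i,i)$ and $(j,j)$ entries on both sides yields $2a = a$ and $0 = d$, so $a = d = 0$ and hence $X = bE_{ij} + cE_{ji}$. (One gets the same conclusion equally well from $E_{jj}E_{ij} + E_{ij}E_{jj} = E_{ij}$.)

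Finally, to see that exactly one of $b,c$ vanishes, I would use $E_{ij}^2 = 0$ together with the Jordan property $\phi(E_{ij}^2) = \phi(E_{ij})^2$ to conclude $X^2 = 0$. A direct computation gives
$$X^2 = (bE_{ij} + cE_{ji})^2 = bc(E_{ii} + E_{jj}),$$
so $bc = 0$; combined with $X \neq 0$ (which is part of the hypothesis), this says exactly one of $b,c$ is zero. Setting $\alpha_{ij} := b$ and $\beta_{ij} := c$ gives the stated form.

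I do not anticipate a genuine obstacle: every step is a direct calculation with matrix units, and the only subtlety is making sure to use $E_{kk} \in \ca{A}_\rho$, which is guaranteed by the SMA hypothesis. The most delicate point is the last one, where nilpotency of $E_{ij}$ is converted into $bc = 0$ via the square-preservation property of Jordan homomorphisms.
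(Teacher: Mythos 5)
Your proof is correct and follows essentially the same strategy as the paper: localize the support of $\phi(E_{ij})$ to the positions $(i,j)$ and $(j,i)$ using the Jordan relation against the diagonal matrix units (the paper compresses this into one application of the identity $\phi(abc+cba)=\phi(a)\phi(b)\phi(c)+\phi(c)\phi(b)\phi(a)$ with $E_{ij}=E_{ii}E_{ij}E_{jj}+E_{jj}E_{ij}E_{ii}$, whereas you probe with each $E_{kk}$ separately), and then conclude from $\phi(E_{ij})^2=\phi(E_{ij}^2)=0$ and $\phi(E_{ij})\neq 0$ that exactly one of the two coefficients vanishes. All computations check out.
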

	\begin{proof}
		Fix $(i,j) \in \rho^\times$. By Lemma \ref{le:Jordan homomorphism basic properties} (b) we have
		$$\phi(E_{ij}) = \phi(E_{ii}E_{ij}E_{jj} + E_{jj}E_{ij}E_{ii}) = E_{ii}\phi(E_{ij})E_{jj} + E_{jj}\phi(E_{ij})E_{ii}.$$
		Therefore, $\phi(E_{ij})$ is supported in $\{(i,j),(j,i)\}$ so there exist scalars $\alpha_{ij}, \beta_{ij} \in \C$ such that
		$$\phi(E_{ij}) = \alpha_{ij}E_{ij} + \beta_{ij}E_{ji}.$$
		Furthermore, we have $$0 = \phi(E_{ij}^2) = \phi(E_{ij})^2 = \alpha_{ij}\beta_{ij}(E_{ii}+E_{jj})$$ 
		so exactly one of $\alpha_{ij}$ and $\beta_{ij}$ is equal to zero $0$ (as none of the matrix units  are in the kernel of $\phi$, by the assumption).
	\end{proof}
	
	\begin{lemma}\label{le:MAMP}
		Let $\ca{A}_\rho \subseteq M_n$ be an SMA and let $\phi : \ca{A}_\rho \to M_n$ be a Jordan homomorphism such that  $\phi(E_{ij}) \ne 0$ for all $(i,j)\in \rho$ and $\phi|_{\ca{D}_n}$ is the identity. Define
		$$\rho^\phi_M := \{(i,j) \in \rho : \phi(E_{ij}) \parallel E_{ij}\}, \qquad \rho^\phi_A := \{(i,j) \in \rho : \phi(E_{ij}) \parallel E_{ji}\}.$$
		We have 
		\begin{enumerate}[(a)]
			\item $\rho^\phi_M \cup \rho^\phi_A = \rho$ and $\rho^\phi_M \cap \rho^\phi_A = \Delta_n$.
			\item $\rho^\phi_M$ and $\rho^\phi_A$ are quasi-orders on $\{1,\ldots,n\}$ and there exist transitive maps $g : \rho^\phi_M \to \C^\times$ and $h : \rho^\phi_A \to \C^\times$ such that the restrictions
			$$\phi|_{\ca{A}_{\rho^\phi_M}} : \ca{A}_{\rho^\phi_M} \to M_n, \qquad \phi|_{\ca{A}_{\rho^\phi_A}} : \ca{A}_{\rho^\phi_A} \to M_n$$
			are equal to $g^*(\cdot)$ and $h^*(\cdot)^t$, respectively. In particular, the maps $\phi|_{\ca{A}_{\rho^\phi_M}}$ and $\phi|_{\ca{A}_{\rho^\phi_A}}$ are multiplicative and antimultiplicative, respectively.
			\item Suppose $(i,j) \in \rho^\times$. Then $$
			(\{i\} \times \rho(i)) \cup (\rho^{-1}(i) \times \{i\}) \cup (\{j\} \times \rho(j)) \cup (\rho^{-1}(j) \times \{j\}) \subseteq \rho^{\phi}_M$$
			or
			$$
			(\{i\} \times \rho(i)) \cup (\rho^{-1}(i) \times \{i\}) \cup (\{j\} \times \rho(j)) \cup (\rho^{-1}(j) \times \{j\}) \subseteq \rho^{\phi}_A.$$
			\item Let $P \in \ca{D}_n$ be a diagonal idempotent defined by
			$$P_{ii} = 1 \iff \text{there exists $1 \le j \le n, j \ne i$ such that $(i,j) \in \rho^\phi_M$ or $(j,i) \in \rho^\phi_M$}.$$
			Then $P\in Z(\ca{A}_\rho)$, $PX \in \ca{A}_{\rho^\phi_M}$ and $(I-P)X \in \ca{A}_{\rho^\phi_A}$ for all $X \in \ca{A}_\rho$.
			
			\item Suppose that $(i,j),(j,k) \in \rho^\times$. Then either $(i,j),(j,k) \in \rho^\phi_M$ or $(i,j),(j,k) \in \rho^\phi_A$.
			
		\end{enumerate}
	\end{lemma}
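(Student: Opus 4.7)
The plan is to prove the parts in the order (a), (e), (c), (b), (d), using (a) and (e) as stepping stones for (c), which in turn underpins (b) and (d).

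Part (a) is immediate from Lemma \ref{le:Eij or Eji}: for each $(i,j) \in \rho^\times$ exactly one of the scalars $\alpha_{ij}, \beta_{ij}$ is zero, so such a pair lies in precisely one of $\rho^\phi_M, \rho^\phi_A$, while diagonal pairs lie in both. For (e), I would argue by contradiction: suppose $(i,j), (j,k) \in \rho^\times$ with $(i,j) \in \rho^\phi_M$ and $(j,k) \in \rho^\phi_A$, and expand $\phi(E_{ij} \circ E_{jk})$ in two ways. Using $\phi(E_{ij}) = \alpha_{ij}E_{ij}$ and $\phi(E_{jk}) = \beta_{jk}E_{kj}$, direct matrix-unit multiplication yields $\phi(E_{ij})\phi(E_{jk}) + \phi(E_{jk})\phi(E_{ij}) = 0$, whereas the Jordan relation forces this sum to equal $\phi(E_{ik}) \neq 0$ when $i \neq k$ and $E_{ii} + E_{jj} \neq 0$ when $i = k$ --- a contradiction.

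Part (c) rests on the same technique. Fix $(i,j) \in \rho^\times$ and assume without loss of generality $(i,j) \in \rho^\phi_M$. The containments $\rho^{-1}(i) \times \{i\} \subseteq \rho^\phi_M$ and $\{j\} \times \rho(j) \subseteq \rho^\phi_M$ reduce at once to (e) via composability with $(i,j)$. For the remaining set $\{i\} \times \rho(i)$, pick $(i,k) \in \rho^\times$ and assume by contradiction $(i,k) \in \rho^\phi_A$: since $E_{ij}E_{ik} = E_{ik}E_{ij} = 0$, the Jordan relation gives $\phi(E_{ij})\phi(E_{ik}) + \phi(E_{ik})\phi(E_{ij}) = 0$, yet direct computation yields $\alpha_{ij}\beta_{ik}E_{kj}$, which is nonzero --- note $k \neq j$, since $k = j$ would force $(i,j) = (i,k)$ into both $\rho^\phi_M$ and $\rho^\phi_A$, contradicting (a). The case $\rho^{-1}(j) \times \{j\}$ is dispatched analogously via $\phi(E_{ij} \circ E_{kj}) = 0 \implies \alpha_{ij}\beta_{kj}E_{ik} = 0$.

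For (b), $\rho^\phi_M, \rho^\phi_A$ are reflexive by definition, and transitivity of $\rho^\phi_M$ follows from computing $\phi(E_{ij} \circ E_{jk})$ for $(i,j),(j,k) \in \rho^\phi_M$: in the case $i \neq k$ one obtains $\phi(E_{ik}) = \alpha_{ij}\alpha_{jk}E_{ik}$, so $(i,k) \in \rho^\phi_M$ with $g(i,k) = g(i,j)g(j,k)$ for $g(i,j) := \alpha_{ij}$ extended by $1$ on $\Delta_n$; the case $i = k$ yields the consistency relation $g(i,j)g(j,i) = 1$. The transitive map $h(i,j) := \beta_{ij}$ on $\rho^\phi_A$ is obtained by an analogous argument (now with $\phi(E_{ij})\phi(E_{jk}) = 0$ and $\phi(E_{jk})\phi(E_{ij}) = \beta_{ij}\beta_{jk}E_{ki}$). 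The equalities $\phi|_{\ca{A}_{\rho^\phi_M}} = g^*(\cdot)$ and $\phi|_{\ca{A}_{\rho^\phi_A}} = h^*(\cdot)^t$ are then verified on the spanning sets of matrix units and on $\ca{D}_n$, and extend by linearity; multiplicativity and antimultiplicativity follow from those of $g^*$ and of the transpose. For (d), Remark \ref{re:central decomposition} reduces centrality of $P$ to constancy on $\tripprox$-equivalence classes, which follows from (c): if $P_{ii} = 1$ is witnessed by some $(i,l) \in \rho^\phi_M$ or $(l,i) \in \rho^\phi_M$ with $l \neq i$, and $(i,j) \in \rho^\times$, then (c) applied to the witnessing pair forces $(i,j) \in \rho^\phi_M$ and hence $P_{jj} = 1$. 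The inclusion $PX \in \ca{A}_{\rho^\phi_M}$ follows since $(PX)_{ij} = P_{ii}X_{ij}$ is supported on pairs $(i,j) \in \rho$ with $P_{ii} = 1$, which is precisely $\rho^\phi_M$ by the defining property of $P$ together with (c); the complementary statement for $(I-P)X$ is analogous.

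The main technical obstacle is (c): although the argument is uniform in spirit, one must select the correct Jordan product of matrix units in each of the four configurations so that the emerging off-diagonal entry is nonzero, and carefully invoke (a) to rule out the degenerate index collisions.
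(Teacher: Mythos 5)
Your proposal is correct and follows essentially the same route as the paper: every part rests on expanding Jordan products of pairs of matrix units in two ways, with the transitive maps $g,h$ read off from the structure constants $\alpha_{ij},\beta_{ij}$, exactly as in the paper's proof. The only organizational differences are that you prove (e) directly and feed it into (c) (the paper derives (e) from (c)), and that you spell out the shared-index configurations $E_{ij}\circ E_{ik}=0$ and $E_{ij}\circ E_{kj}=0$ needed for $\{i\}\times\rho(i)$ and $\rho^{-1}(j)\times\{j\}$ --- cases the paper compresses into ``analogous'' even though the composable configuration alone cannot reach them (e.g.\ $(1,3)$ from $(1,2)$ in $\rho=\Delta_3\cup\{(1,2),(1,3)\}$), so making them explicit is a welcome touch.
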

	\begin{proof}
		Following the notation from Lemma \ref{le:Eij or Eji}, throughout the proof for each $(i,j) \in \rho^\phi_M$ by $\alpha_{ij} \in \C^\times$ we denote the unique nonzero scalar such that $\phi(E_{ij}) = \alpha_{ij}E_{ij}$, while for each $(i,j) \in \rho_A$ by $\beta_{ij} \in \C^\times$ we denote the unique nonzero scalar such that $\phi(E_{ij}) = \beta_{ij}E_{ji}$.
		
		\begin{enumerate}[(a)]
			\item Clearly, for all $1 \le i \le n$ we have $\phi(E_{ii}) = E_{ii}$ so $(i,i)$ is contained in both $\rho^\phi_M $ and $\rho^\phi_A$. It follows that $\Delta_n \subseteq \rho^\phi_M \cap \rho^\phi_A $. On the other hand, let $(i,j) \in \rho^\times$ be arbitrary. Lemma \ref{le:Eij or Eji} directly implies that either $(i,j) \in \rho^\phi_M$ or $(i,j) \in \rho^\phi_A$. We conclude $\rho^\phi_M \cup \rho^\phi_A = \rho$ and $\rho^\phi_M \cap \rho^\phi_A \subseteq \Delta_n$.

			\item
			
			We first prove that $\rho^\phi_M$ is a quasi-order and that there exists a transitive map $g : \rho^\phi_M \to \C^\times$ such that $\phi|_{\ca{A}_{\rho^\phi_M}} = g^*(\cdot)$. The reflexivity of $\rho^\phi_M$ follows from (a). Suppose that $(i,j),(j,k) \in \rho^\phi_M$. We show $(i,k) \in \rho^\phi_M$ and $\alpha_{ik} = \alpha_{ij}\alpha_{jk}$. Both of these are clear if $i=j$ or $j=k$ so we can further assume $(i,j),(j,k) \in (\rho^\phi_M)^\times$. We have
			\begin{align*}
				\phi(E_{ik}) + \delta_{ki} E_{jj} &= \phi(E_{ik} + \delta_{ki}E_{jj}) \\
				&= \phi(E_{ij}E_{jk} + E_{jk}E_{ij}) = \phi(E_{ij})\phi(E_{jk}) + \phi(E_{jk})\phi(E_{ij}) \\
				&= \alpha_{ij}\alpha_{jk}(E_{ik} + \delta_{ki}E_{jj}),
			\end{align*}
			where, as usual, $\delta_{ij}$ denotes the Kronecker delta symbol. If $i = k$, then $(i,i) \in \rho^\phi_M$ is trivial and the above relation reduces to
			$$E_{ii} + E_{jj} = \alpha_{ij}\alpha_{ji}(E_{ii} + E_{jj})$$
			which implies $\alpha_{ij}\alpha_{ji} = 1$. On the other hand, if $i \ne k$, then the above relation reduces to
			$$\phi(E_{ik}) = \alpha_{ij}\alpha_{jk}E_{ik}$$
			which first implies $(i,k) \in \rho^{\phi}_M$ and then $\alpha_{ik} = \alpha_{ij}\alpha_{jk}$. It follows directly that the map $g : \rho^{\phi}_M \to \C^\times, g(i,j) := \alpha_{ij}$ is transitive and $\phi|_{\ca{A}_{\rho^\phi_M}} = g^*(\cdot)$.
			
			To show the second claim, consider the map
			$\phi^t : \ca{A}_{\rho} \to M_n$, given by $X \mapsto \phi(X)^t$. Obviously, $\phi^t$ satisfies the same properties as $\phi$ and $\rho^\phi_A = \rho^{\phi^t}_M.$
			By the first part of the proof it follows that $\rho^\phi_A$ is a quasi-order and that there exists a transitive map $h : \rho^{\phi^t}_M \to \C^\times$ such that $\phi^t|_{\ca{A}_{\rho^{\phi^t}_M}} = h^*(\cdot)$. Then clearly $\phi|_{\ca{A}_{\rho^{\phi}_A}} = h^*(\cdot)^t$.
			
			\item For concreteness assume that $(i,j) \in \rho^{\phi}_M$. Let $(j,k) \in \rho^\times$ be arbitrary. We claim that $(j,k) \in \rho^{\phi}_M$. Assume the contrary, that $(j,k) \in \rho^{\phi}_A$. Then we have
			\begin{align*}
				\phi(E_{ik}) + \delta_{ik}E_{jj}&= \phi(E_{ik} + \delta_{ik}E_{jj}) \\
				&= \phi(E_{ij}E_{jk} + E_{jk}E_{ij}) \\
				&= \phi(E_{ij})\phi(E_{jk}) + \phi(E_{jk})\phi(E_{ij}) \\
				&= \alpha_{ij}\beta_{jk}(E_{ij}E_{kj} + E_{kj}E_{ij}) \\
				&= 0,
			\end{align*}
			which is a contradiction. Therefore, $(j,k) \in \rho^{\phi}_M$, and consequently $(i,k) \in \rho^{\phi}_M$ by (b). It follows that $(\{i\} \times \rho(i)) \cup (\{j\} \times \rho(j)) \subseteq \rho^{\phi}_M$. The proof of $(\rho^{-1}(i) \times \{i\}) \cup (\rho^{-1}(j) \times \{j\}) \subseteq \rho^{\phi}_M$  as well as of the corresponding statement for $\rho^{\phi}_A$ is analogous.
			\item It suffices to show that for all $(i,j) \in \rho$ we have $P \leftrightarrow E_{ij}$, $PE_{ij} \in \ca{A}_{\rho^\phi_M}$ and $(I-P)E_{ij} \in \ca{A}_{\rho^\phi_A}$. Since all three claims are trivially true when $i=j$, fix $(i,j) \in \rho^\times$. By (a), we consider two cases:
			\begin{itemize}
				\item If $(i,j) \in \rho^\phi_M$, then $P_{ii} = P_{jj} = 1$ by definition, so
				$$PE_{ij} = \underbrace{E_{ij}}_{\in \ca{A}_{\rho^\phi_M}} = E_{ij}P \implies (I-P)E_{ij} = 0 \in \ca{A}_{\rho^\phi_A},$$
				which establishes all three claims.
				
				\item If $(i,j) \in \rho^\phi_A$, then $P_{ii} = P_{jj} = 0$ as a consequence of (c), so
				$$PE_{ij} = \underbrace{0}_{\in \ca{A}_{\rho^\phi_M}} = E_{ij}P \implies (I-P)E_{ij} = E_{ij} \in \ca{A}_{\rho^\phi_A},$$
				which establishes all three claims.
			\end{itemize}
			
			\item This is a direct consequence of (c).
		\end{enumerate}
	\end{proof}
	
	\begin{theorem}\label{thm:general form of phi}
		Let $\ca{A}_\rho \subseteq M_n$ be an SMA and let $\phi : \ca{A}_\rho \to M_n$ be a Jordan homomorphism such that $\phi(E_{ij}) \ne 0$ for all $(i,j)\in \rho$. Then there exists an invertible matrix $S \in M_n^\times$, a central idempotent $P \in Z(\ca{A}_\rho)$, and a transitive map $g : \rho \to \C^\times$ such that
		$$\phi(\cdot) = S(Pg^*(\cdot) + (I-P)g^*(\cdot)^t)S^{-1}.$$
		In particular, $\phi$ is injective (i.e.\ a Jordan embedding).
	\end{theorem}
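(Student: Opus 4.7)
The strategy is to reduce to the hypothesis of Lemma \ref{le:MAMP} by a preliminary similarity, then combine the two "multiplicative" and "antimultiplicative" parts coming from that lemma into a single formula using the central idempotent $P$.

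First I would invoke Lemma \ref{le:preserves diagonalizable} to obtain $S \in M_n^\times$ with $\phi(D)=SDS^{-1}$ for all $D\in\ca{D}_n$, and pass to the conjugated map $\psi(\cdot):=S^{-1}\phi(\cdot)S$. This is again a Jordan homomorphism with $\psi(E_{ij})\ne 0$ for all $(i,j)\in\rho$ and $\psi|_{\ca{D}_n}=\id$, so Lemma \ref{le:MAMP} applies to $\psi$ and yields the decomposition $\rho = \rho^\psi_M \cup \rho^\psi_A$ with $\rho^\psi_M\cap\rho^\psi_A=\Delta_n$, transitive maps $g:\rho^\psi_M\to\C^\times$ and $h:\rho^\psi_A\to\C^\times$ such that $\psi|_{\ca{A}_{\rho^\psi_M}}=g^*(\cdot)$ and $\psi|_{\ca{A}_{\rho^\psi_A}}=h^*(\cdot)^t$, and a central idempotent $P\in Z(\ca{A}_\rho)$ with $PX\in\ca{A}_{\rho^\psi_M}$ and $(I-P)X\in\ca{A}_{\rho^\psi_A}$ for every $X\in\ca{A}_\rho$.

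Next I would splice $g$ and $h$ together into a single map $\tilde{g}:\rho\to\C^\times$ defined as $g$ on $\rho^\psi_M$ and as $h$ on $\rho^\psi_A$ (the values agree on the overlap $\Delta_n$, where both equal $1$). Its transitivity on all of $\rho$ is precisely what Lemma \ref{le:MAMP}(e) guarantees: given $(i,j),(j,k)\in\rho^\times$, both pairs lie in the same piece ($\rho^\psi_M$ or $\rho^\psi_A$), so transitivity on that piece applies directly; the remaining cases involving $\Delta_n$ are trivial. Writing $X = PX + (I-P)X$ and using Lemma \ref{le:MAMP}(d), additivity of $\psi$ gives
\begin{equation*}
  \psi(X) = g^*(PX) + h^*((I-P)X)^t = \tilde{g}^*(PX) + \tilde{g}^*((I-P)X)^t,
\end{equation*}
where in the last equality I use that the support of $PX$ lies in $\rho^\psi_M$ and the support of $(I-P)X$ lies in $\rho^\psi_A$, so switching to $\tilde{g}^*$ changes nothing. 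Since $P$ is diagonal and $\tilde{g}^*$ acts diagonally on the matrix-unit basis, $\tilde{g}^*(PX)=P\tilde{g}^*(X)$ and $\tilde{g}^*((I-P)X)=(I-P)\tilde{g}^*(X)$ by direct entrywise comparison; transposing and using that $I-P$ is a self-transpose diagonal element of $Z(\ca{A}_\rho)$, which therefore commutes with every matrix in $\ca{A}_\rho$ \emph{and} its transpose (a diagonal matrix commutes with $A$ iff it commutes with $A^t$), yields $\tilde{g}^*((I-P)X)^t = (I-P)\tilde{g}^*(X)^t$. Substituting back and conjugating by $S$ gives the desired formula.

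Finally, injectivity: if $S(P\tilde{g}^*(X)+(I-P)\tilde{g}^*(X)^t)S^{-1}=0$, left-multiplying by $P$ and by $I-P$ separately yields $P\tilde{g}^*(X)=0$ and $(I-P)\tilde{g}^*(X)^t=0$; transposing the second and using again that $I-P$ commutes with $\tilde{g}^*(X)^t$ gives $(I-P)\tilde{g}^*(X)=0$, whence $\tilde{g}^*(X)=0$ and therefore $X=0$ since $\tilde{g}^*$ is a bijective scaling of the matrix-unit basis. The main obstacle in this plan is the bookkeeping needed to merge $g$ and $h$ into one transitive map on the whole of $\rho$, which is exactly where Lemma \ref{le:MAMP}(e) is indispensable; once that is in hand, the rest is essentially algebraic manipulation using the diagonality of $P$ and the centrality of $P$ inside $\ca{A}_\rho$.
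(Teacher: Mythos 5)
Your proposal is correct and follows essentially the same route as the paper's proof: conjugate by the $S$ from Lemma \ref{le:preserves diagonalizable}, apply Lemma \ref{le:MAMP}, merge $g$ and $h$ into one transitive map via part (e), decompose $X=PX+(I-P)X$ using the central idempotent from part (d), and prove injectivity by multiplying by $P$ and $I-P$ and transposing. The only cosmetic difference is that you justify moving $I-P$ past $\tilde{g}^*(X)^t$ explicitly, whereas the paper does the same implicitly via $f^*(X)^tf^*(I-P)^t=(I-P)f^*(X)^t$.
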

	\begin{proof}
		By Lemma \ref{le:preserves diagonalizable} there exists $S \in M_n^\times$ such that $\phi(D) = SDS^{-1}$ for all $D \in \ca{D}_n$. By passing onto the map $S^{-1}\phi(\cdot)S$ which satisfies the same properties, without loss of generality we assume that $\phi|_{\ca{D}_n}$ is the identity.\smallskip
		
		Let  $g : \rho^\phi_M \to \C^\times$ and $h : \rho^\phi_A \to \C^\times$ be transitive maps from Lemma \ref{le:MAMP} (b).  Define a map
		$$f : \rho \to \C^\times, \qquad f(i,j) = \begin{cases}
			g(i,j), &\quad \text{ if } (i,j) \in \rho^{\phi}_M,\\
			h(i,j), &\quad \text{ if } (i,j) \in \rho^{\phi}_A.
		\end{cases}$$
		Lemma \ref{le:MAMP} (e) directly implies that $f$ is a transitive map. Clearly, $f^*|_{\ca{A}_{\rho^{\phi}_M}} = g^*$ and $f^*|_{\ca{A}_{\rho^{\phi}_A}} = h^*$.\smallskip
		
		Let $P \in Z(\ca{A}_\rho)$ be the central idempotent defined in Lemma \ref{le:MAMP} (d). As $PX \in \ca{A}_{\rho^{\phi}_M}$ and $(I-P)X \in \ca{A}_{\rho^{\phi}_A}$ for all for $X \in \ca{A}_\rho$, we have 
		\begin{align*}
			\phi(X) &= \phi(PX) + \phi((I-P)X) = g^*(PX) + h^*((I-P)X)^t \\
			&= f^*(PX) + f^*((I-P)X)^t = f^*(P)f^*(X) + f^*(X)^t f^*(I-P)^t \\
			&= Pf^*(X) + (I-P)f^*(X)^t.
		\end{align*}

		It remains to show the injectivity of $\phi$. Let $X \in \ca{A}_\rho$ be a matrix such that $\phi(X) = 0$. By left-multiplying the expression
		$$0 = \phi(X) = Pf^*(X) + (I-P)f^*(X)^t$$
		by $P$ and $I-P$ respectively, we conclude $Pf^*(X) = (I-P)f^*(X)^t = 0$. Since $I-P \in Z(\ca{A}_\rho) \subseteq \ca{D}_n$, the latter equality can be transposed to yield $(I-P)f^*(X) = 0$. Overall, we obtain
		$$0 = Pf^*(X) + (I-P)f^*(X) = f^*(X),$$
		which implies $X = 0$ by the injectivity of $f^*$. We conclude that $\phi$ is injective.
		
	\end{proof}

	\begin{corollary}\label{cor:MAMP equivalences}
		Let $\ca{A}_\rho \subseteq M_n$ be an SMA. The following conditions are equivalent:
		\begin{enumerate}[(i)]
			\item Every Jordan embedding $\ca{A}_\rho \to M_n$ is multiplicative or antimultiplicative.
			\item The quotient set $\ca{Q}$ defined by \eqref{eq:Qkvocx} contains at most one class $C\in \ca{Q}$ with $\abs{C} \ge 2$.
		\end{enumerate}
	\end{corollary}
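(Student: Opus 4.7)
The plan is to deduce the equivalence directly from Theorem \ref{thm:general form of phi}. Any Jordan embedding $\phi: \ca{A}_\rho \to M_n$ has the form $\phi = S(Pg^*(\cdot) + (I-P)g^*(\cdot)^t)S^{-1}$, and Remark \ref{re:central decomposition} identifies the central idempotent $P$ as $\sum_{C \in T} P_C$ for some subset $T \subseteq \ca{Q}$. The guiding observation will be that on a singleton class $C=\{i\}$ the component $X_C := P_C X$ is a scalar multiple of $E_{ii}$ and hence equals its own transpose, while matrix units from distinct classes are mutually orthogonal under multiplication.

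For $(ii) \Rightarrow (i)$ I will assume at most one class $C_0 \in \ca{Q}$ has $|C_0| \ge 2$. If $C_0$ does not exist or $C_0 \in T$, then every summand of $(I-P)g^*(X)^t$ sits on a singleton, the transpose is trivial, and $\phi$ collapses to $Sg^*(\cdot)S^{-1}$, an algebra embedding. Otherwise $C_0 \notin T$ and
$$\phi(X) = S\Bigl[g^*(X_{C_0})^t + \sum_{C \ne C_0} g^*(X_C)\Bigr]S^{-1}.$$
Expanding $\phi(Y)\phi(X)$, the cross-terms between distinct classes vanish by orthogonality of the central summands, and the commutativity of each $\ca{A}_{{\rho}_C}$ for $C \ne C_0$ gives $Y_CX_C = X_CY_C$; a short manipulation then yields $\phi(Y)\phi(X) = \phi(XY)$, so $\phi$ is antimultiplicative.

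For $(i) \Rightarrow (ii)$ I will argue by contrapositive. If $\ca{Q}$ contains two distinct classes $C_1, C_2$ with $|C_1|, |C_2| \ge 2$, consider
$$\phi(X) := P_{C_1}X + (I - P_{C_1})X^t.$$
A direct computation gives $\phi(X^2) = \phi(X)^2$, and injectivity follows as at the end of the proof of Theorem \ref{thm:general form of phi}, using that the diagonal central idempotent $P_{C_1}$ commutes with both $X$ and $X^t$; hence $\phi$ is a Jordan embedding. Expanding products yields
$$\phi(XY)-\phi(X)\phi(Y) = (I-P_{C_1})(XY-YX)^t, \qquad \phi(XY)-\phi(Y)\phi(X) = P_{C_1}(XY-YX).$$
By the transitive-closure definition of $\tripprox$, every non-singleton class contains some pair $(i,j) \in \rho^\times$ with both entries inside it; choosing such a pair in $C_1$ and taking $X = E_{ij}, Y = E_{jj}$ makes $P_{C_1}(XY-YX) = E_{ij} \ne 0$, breaking antimultiplicativity, and an analogous pair in $C_2$ makes $(I-P_{C_1})(XY-YX)^t = E_{ji} \ne 0$, breaking multiplicativity. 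The main delicacy is the cross-term bookkeeping for antimultiplicativity in $(ii) \Rightarrow (i)$; everything else is routine once Theorem \ref{thm:general form of phi} and the central decomposition are in hand.
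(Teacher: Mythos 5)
Your proposal is correct and follows essentially the same route as the paper: the same counterexample $P_{C_1}(\cdot)+(I-P_{C_1})(\cdot)^t$ tested on matrix-unit pairs inside $C_1$ and $C_2$ for $(i)\Rightarrow(ii)$, and Theorem \ref{thm:general form of phi} combined with the transpose-invariance of singleton components for $(ii)\Rightarrow(i)$. The only cosmetic difference is that you verify antimultiplicativity by expanding products over the central decomposition, whereas the paper absorbs the singleton summands into a single $g^*(\cdot)^{\circ_C}$ so that the map is manifestly multiplicative or antimultiplicative.
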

	\begin{proof}
		
		\fbox{$(i) \implies (ii)$} 
		We prove the contrapositive. Suppose that there exist $C_1,C_2 \in \ca{Q}$, $C_1 \ne C_2$ such that $\abs{C_1}, \abs{C_2} \ge 2$. Let $$P := \sum_{i\in C_1} E_{ii} \in \ca{D}_n.$$
		By Remark \ref{re:central decomposition}, $P$ is a central projection and therefore, the map
		$$\phi : \ca{A}_\rho \to M_n, \qquad \phi(\cdot) = P(\cdot) + (I-P)(\cdot)^t$$
		is a Jordan embedding which is neither multiplicative nor antimultiplicative. Indeed, choose some $i,j \in C_2$ such that $(i,j) \in \rho^\times$. Then
		$$\phi(E_{ij}E_{jj}) = \phi(E_{ij}) = E_{ji} \ne 0 = E_{ji}E_{jj}=\phi(E_{ij})\phi(E_{jj})$$
		shows that $\phi$ is not multiplicative. That $\phi$ is not antimultiplicative can be shown in a similar way by choosing elements of $C_1$.
		\smallskip

		\fbox{$(ii) \implies (i)$} Suppose that there exists $C \in \ca{Q}$, $\abs{C} \ge 1$ such that $$\ca{Q} = \{C\} \cup \{\{i\} : i \in \{1,\ldots,n\}\setminus C\}.$$
		Let $\phi : \ca{A}_\rho \to M_n$ be a Jordan embedding. By Theorem \ref{thm:general form of phi} there exists an invertible matrix $S \in M_n^\times$, a central idempotent $P \in Z(\ca{A}_\rho)$, and a transitive map $g : \rho \to \C^\times$ such that
		$$\phi(\cdot) = S(Pg^*(\cdot) + (I-P)g^*(\cdot)^t)S^{-1}.$$
		Notice that for all $i \in \{1,\ldots,n\}\setminus C$ we have
		$$E_{ii}X = E_{ii}X^t, \qquad \text{ for all }X \in \ca{A}_{\rho}.$$
		Recall from Remark \ref{re:central decomposition} that both $P$ and $I-P$ are sums of $E_{ii}$ for $i \in \{1,\ldots,n\}\setminus C$ and $P_C = \sum_{j \in C}E_{jj}$. Therefore, for $C$ and each $i \in \{1,\ldots,n\}\setminus C$ there is a map $(\cdot)^\circ_C, (\cdot)^{\circ_i} \in \{\id,(\cdot)^t\}$ such that
		\begin{align*}
			\phi(\cdot) &= S\left(\sum_{i \in \{1,\ldots,n\}\setminus C} E_{ii}g^*(\cdot)^{\circ_i} + P_C g^*(\cdot)^{\circ_C}\right)S^{-1}\\
			&= S\left(\sum_{i \in \{1,\ldots,n\}\setminus C} E_{ii}g^*(\cdot)^{\circ_C} + P_C g^*(\cdot)^{\circ_C}\right)S^{-1}\\
			&= Sg^*(\cdot)^{\circ_C} S^{-1}.
		\end{align*}
		We conclude that $\phi$ is multiplicative or antimultiplicative (depending on whether $\circ_C$ is the identity map or the transposition map).
	\end{proof}
	
	Using Theorem  \ref{thm:inner diagonalization on SMA} we can now prove the following consequences of Theorem \ref{thm:general form of phi} regarding Jordan embeddings between two SMAs. Before stating them, let us introduce the following auxiliary notation. Let $\rho$ be a quasi-order on $\{1,\ldots,n\}$ and consider a subset $\ca{U} \subseteq \{1,\ldots,n\}$ expressible as a union of some equivalence classes of $\tripprox$ (i.e.\ $\ca{U}$ is a union of some classes of the quotient set $\ca{Q}_{\rho}$). Denote $\ca{U}^c := \{1,\ldots,n\} \setminus \ca{U}$ and define a relation on $\{1,\ldots,n\}$ by
	\begin{equation}\label{eq:wierd quasi order}
		\rho^{\ca{U}} := \left(\rho \cap (\ca{U}\times \ca{U}) \right) \cup \left(\rho^t \cap (\ca{U}^c \times \ca{U}^c)\right),
	\end{equation}
	which is easily seen to be a quasi-order.
	
	{\begin{corollary}\label{cor:SMA Jordan embeddings}
			
			Let $\ca{A}_\rho, \ca{A}_{\rho'} \subseteq M_n$ be SMAs. 
			Then $\ca{A}_{\rho}$ Jordan-embeds into $\ca{A}_{\rho'}$ if and only if there exists a subset \,$\ca{U} \subseteq \{1,\ldots,n\}$ expressible as a union of classes of $\ca{Q}_{\rho}$, and a $(\rho^{\mathcal{U}}, \rho')$-increasing permutation $\pi \in S_n$. Furthermore, if $\phi : \ca{A}_\rho \to \ca{A}_{\rho'}$ is a Jordan embedding, then there exists an invertible matrix $S \in \ca{A}_{\rho'}^\times$, a central idempotent $P \in Z(\ca{A}_\rho)$, a transitive map $g : \rho \to \C^\times$, and a $(\rho^{\mathcal{U}}, \rho')$-increasing permutation $\pi \in S_n$, where $\ca{U}:= \{1 \le i \le n : (i,i) \in \supp P\}$, such that
			$$\phi(\cdot) = (S R_{\pi}) (Pg^*(\cdot) + (I-P)g^*(\cdot)^t) (S R_{\pi})^{-1}.$$
		\end{corollary}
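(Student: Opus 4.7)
The plan is to combine Theorem~\ref{thm:general form of phi} with the intrinsic diagonalization result of Theorem~\ref{thm:inner diagonalization on SMA} to force the outer similarity matrix to lie in $\ca{A}_{\rho'}^\times$. Let $\phi : \ca{A}_\rho \to \ca{A}_{\rho'}$ be a Jordan embedding; since $\phi$ is injective we have $\phi(E_{ij})\ne 0$ for all $(i,j)\in\rho$, placing us in the setting of Theorem~\ref{thm:general form of phi}. The matrices $\phi(E_{11}),\ldots,\phi(E_{nn})$ form a mutually orthogonal family of nonzero rank-one idempotents in $\ca{A}_{\rho'}$ summing to $I$; by Theorem~\ref{thm:inner diagonalization on SMA} there exist $S \in \ca{A}_{\rho'}^\times$ and a permutation $\pi \in S_n$ with $\phi(E_{ii}) = (SR_\pi) E_{ii} (SR_\pi)^{-1}$ for each $i$. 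Then $\tilde\phi := (SR_\pi)^{-1} \phi(\cdot) (SR_\pi)$ is a Jordan embedding $\ca{A}_\rho \to M_n$ satisfying $\tilde\phi|_{\ca{D}_n}=\id$, and the proof of Theorem~\ref{thm:general form of phi} (whose initial diagonalizing conjugation is now trivial) directly yields a central idempotent $P \in Z(\ca{A}_\rho)$ and a transitive $g:\rho\to\C^\times$ with $\tilde\phi(X) = Pg^*(X) + (I-P)g^*(X)^t$. Conjugating back by $SR_\pi$ produces the claimed formula.

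It remains to identify the combinatorial constraint on $\pi$. Set $\ca{U} := \{1\le i \le n : P_{ii}=1\}$; by Remark~\ref{re:central decomposition}, $\ca{U}$ is a union of $\tripprox$-classes. Using that $P$ is diagonal and central in $\ca{A}_\rho$, for every $X\in\ca{A}_\rho$ the matrix $PX$ is supported in $\rho\cap(\ca{U}\times\ca{U})$ while $(I-P)X^t$ is supported in $\rho^t \cap (\ca{U}^c\times\ca{U}^c)$; bijectivity of $g^*$ on $\ca{A}_\rho$ then gives
$$\bigl\{P g^*(X) + (I-P) g^*(X)^t : X \in \ca{A}_\rho\bigr\} = \ca{A}_{\rho^{\ca{U}}}.$$
Since $S \in \ca{A}_{\rho'}^\times$, the inclusion $\phi(\ca{A}_\rho) \subseteq \ca{A}_{\rho'}$ is equivalent to $R_\pi \ca{A}_{\rho^{\ca{U}}} R_\pi^{-1} \subseteq \ca{A}_{\rho'}$, which by the discussion around~\eqref{eq:conjugation by permutation} says exactly that $\pi$ is $(\rho^{\ca{U}},\rho')$-increasing.

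For the converse, given $\ca{U}$ and $\pi$ satisfying the hypotheses, set $P := \sum_{i\in\ca{U}} E_{ii}$ (central in $\ca{A}_\rho$ by Remark~\ref{re:central decomposition}) and define $\phi(X) := R_\pi \bigl(PX + (I-P)X^t\bigr) R_\pi^{-1}$. The identities $P^2=P$, $P(I-P)=0$, and $(PY)^t = PY^t$ (the latter from centrality and diagonality of $P$) reduce the verification that $\phi$ is a Jordan homomorphism to a direct cross-term calculation, and injectivity follows from the splitting argument at the end of the proof of Theorem~\ref{thm:general form of phi}. The image lies in $R_\pi \ca{A}_{\rho^{\ca{U}}} R_\pi^{-1} \subseteq \ca{A}_{\rho'}$ by the preceding paragraph. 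The crux of the argument is the opening step: producing an $S$ inside $\ca{A}_{\rho'}^\times$ rather than merely in $M_n^\times$ is exactly what the intrinsic diagonalization theorem provides, and without it one would only recover the form of a Jordan embedding $\ca{A}_\rho \to M_n$, not the refined statement that the conjugator can be drawn from the target SMA.
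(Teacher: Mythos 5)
Your proof is correct and takes essentially the same route as the paper: the only difference is one of ordering, in that you intrinsically diagonalize the idempotent family $\phi(E_{11}),\dots,\phi(E_{nn})$ first (as the paper does for algebra embeddings in Corollary \ref{cor:SMA embedding}) and then rerun the argument of Theorem \ref{thm:general form of phi} with $\tilde\phi|_{\ca{D}_n}=\id$, whereas the paper applies Theorem \ref{thm:general form of phi} first and then corrects the conjugator $T$ to $SR_\pi$ with $S\in\ca{A}_{\rho'}^\times$ by intrinsically diagonalizing $\phi(\diag(1,\dots,n))$. Both versions rest on exactly the same two ingredients (Theorem \ref{thm:general form of phi} and Theorem \ref{thm:inner diagonalization on SMA}), and your identification of the image with $\ca{A}_{\rho^{\ca{U}}}$ and the resulting $(\rho^{\ca{U}},\rho')$-increasing condition matches the paper's conclusion.
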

		\begin{proof}
			\fbox{$\implies$} Suppose that $\phi : \ca{A}_\rho \to \ca{A}_{\rho'}$ is a Jordan embedding. By Theorem \ref{thm:general form of phi}, there exists an invertible matrix $T \in M_n^\times$, a central idempotent $P \in Z(\ca{A}_\rho)$ and a transitive map $g : \rho \to \C^\times$ such that
			$$\phi(\cdot) = T (Pg^*(\cdot) + (I-P)g^*(\cdot)^t) T^{-1}.$$
			Denote $\Lambda_n:=\diag(1, \ldots, n) \in \ca{D}_n$. Obviously, the matrix $\phi(\Lambda_n) = T\Lambda_n T^{-1}$
			has eigenvalues $1, \ldots, n$ so by Theorem \ref{thm:inner diagonalization on SMA}, there exists $S\in \mathcal{A}_{\rho'}^\times$ and a permutation $\pi \in S_n$ such that $\phi(\Lambda_n) = (SR_{\pi})\Lambda_n (SR_{\pi})^{-1}$. We have 
			$$T\Lambda_n T^{-1} = (SR_{\pi})\Lambda_n (SR_{\pi})^{-1} \implies (SR_{\pi})^{-1} T \leftrightarrow \Lambda_n$$
			and hence we conclude that there exists a diagonal matrix $D \in \mathcal{D}_n^\times$ such that $$T = SR_{\pi} D = S\underbrace{(R_{\pi} D R_{\pi}^{-1})}_{\in \mathcal{D}_n^\times} R_{\pi}.$$
			By absorbing $R_{\pi} D R_{\pi}^{-1}$ into $S$, without loss of generality we can write $T = SR_{\pi}$ and therefore 
			$$\phi(\cdot) = (S R_{\pi}) (Pg^*(\cdot) + (I-P)g^*(\cdot)^t) (S R_{\pi})^{-1}.$$
			Note that, by Remark \ref{re:central decomposition}, $\ca{U}= \{1 \le i \le n : (i,i) \in \supp P\}$ is a union of certain classes of $\ca{Q}_{\rho}$. Consider the quasi-order $\rho^{\ca{U}}$ from \eqref{eq:wierd quasi order} with respect to this $\mathcal{U}$.  We in particular obtain $R_{\pi} \mathcal{A}_{\rho^{\ca{U}}} R_{\pi}^{-1} \subseteq \mathcal{A}_{\rho'}$, thus concluding that $\pi$ is $(\rho^{\mathcal{U}}, \rho')$-increasing.
			
			\smallskip
			
			\noindent \fbox{$\impliedby$} We have $R_{\pi} \mathcal{A}_{\rho^{\ca{U}}} R_{\pi}^{-1} \subseteq \mathcal{A}_{\rho'}$ for some subset $\ca{U} \subseteq \{1,\ldots,n\}$ expressible as a union of classes of $\ca{Q}_{\rho}$. Let $P \in Z(\ca{A}_{\rho})$ be a central idempotent corresponding to $\mathcal{U}$, i.e.\ $P := \sum_{i \in \ca{U}} E_{ii}$. It easily follows that
			$$R_{\pi} (P(\cdot) + (I-P)(\cdot)^t) R_{\pi}^{-1}$$
			is a Jordan embedding $\ca{A}_\rho \to \ca{A}_{\rho'}$.
		\end{proof}
		
		By plugging $\rho' = \rho$ into Corollary \ref{cor:SMA Jordan embeddings}, we obtain the description of Jordan automorphisms of SMAs:
		\begin{corollary}\label{cor:JAUT-SMA}
			Let $\ca{A}_\rho\subseteq M_n$ be an SMA. A map $\phi : \ca{A}_\rho \to \ca{A}_\rho$ is a Jordan automorphism if and only if there exists an invertible matrix $S \in \ca{A}_{\rho}^\times$, a central idempotent $P \in Z(\ca{A}_\rho)$, a transitive map $g : \rho \to \C^\times$, and a $(\rho^{\mathcal{U}}, \rho)$-increasing permutation $\pi \in S_n$, where $\ca{U}:= \{1 \le i \le n : (i,i) \in \supp P\}$, such that
			$$\phi(\cdot) = (S R_{\pi}) (Pg^*(\cdot) + (I-P)g^*(\cdot)^t) (S R_{\pi})^{-1}.$$
		\end{corollary}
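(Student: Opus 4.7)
The plan is to deduce this statement as a direct specialization of Corollary \ref{cor:SMA Jordan embeddings} with $\rho' = \rho$. The only additional ingredient is the elementary observation that, since $\ca{A}_\rho$ is finite-dimensional, a Jordan embedding $\ca{A}_\rho \to \ca{A}_\rho$ is automatically bijective, so the notions of Jordan embedding and Jordan automorphism coincide in this self-map setting.

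For the forward direction, any Jordan automorphism $\phi : \ca{A}_\rho \to \ca{A}_\rho$ is in particular injective and hence a Jordan embedding. Applying the forward direction of Corollary \ref{cor:SMA Jordan embeddings} with $\rho' = \rho$ yields an invertible $S \in \ca{A}_\rho^\times$, a central idempotent $P \in Z(\ca{A}_\rho)$, a transitive map $g : \rho \to \C^\times$, and a $(\rho^{\ca{U}}, \rho)$-increasing permutation $\pi \in S_n$ (with $\ca{U} = \{1 \le i \le n : (i,i) \in \supp P\}$) satisfying
$$\phi(\cdot) = (S R_{\pi}) (P g^*(\cdot) + (I-P) g^*(\cdot)^t) (S R_{\pi})^{-1},$$
which is exactly the claimed form.

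Conversely, if $\phi$ is given by the formula in the statement, then the backward direction of Corollary \ref{cor:SMA Jordan embeddings} (again specialized to $\rho' = \rho$) asserts that $\phi$ is a Jordan embedding $\ca{A}_\rho \to \ca{A}_\rho$. Since $\dim \ca{A}_\rho < \infty$, any injective linear self-map of $\ca{A}_\rho$ must be surjective as well, so $\phi$ is a bijective Jordan homomorphism, i.e.\ a Jordan automorphism. There is essentially no genuine obstacle in this proof beyond Corollary \ref{cor:SMA Jordan embeddings} itself; all the hard work has already been carried out in Theorem \ref{thm:general form of phi} (the general representation of Jordan embeddings $\ca{A}_\rho \to M_n$) and in its refinement to codomain $\ca{A}_{\rho'}$, which relies crucially on the intrinsic diagonalization result of Theorem \ref{thm:inner diagonalization on SMA}.
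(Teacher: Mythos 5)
Your proposal is correct and matches the paper's approach exactly: the paper obtains this corollary by "plugging $\rho'=\rho$ into Corollary \ref{cor:SMA Jordan embeddings}" with no further argument. Your extra remark that an injective linear self-map of the finite-dimensional space $\ca{A}_\rho$ is automatically surjective is the right (and only) detail needed to pass from ``Jordan embedding'' to ``Jordan automorphism,'' and it is handled correctly.
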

		
		\smallskip
		
		At the end of this section, we also provide a complete answer to the first part of \cite[Problem 1.2]{GogicPetekTomasevic} for the class of SMAs. More precisely, we provide necessary and sufficient conditions for an SMA $\mathcal{A}_\rho \subseteq M_n$ such that each Jordan embedding $\phi : \mathcal{A}_\rho \to M_n$ extends to a Jordan automorphism of $M_n$.
		\begin{corollary}
			Let $\ca{A}_\rho\subseteq M_n$ be an SMA. The following conditions are equivalent:
			\begin{enumerate}[(i)]
				\item Each Jordan embedding $\phi : \mathcal{A}_\rho \to M_n$ extends to a Jordan automorphism of $M_n$.
				\item All transitive maps $g : \rho \to \C^\times$ are trivial and there is at most one class $C \in \ca{Q}$ such that $\abs{C} \ge 2$.
			\end{enumerate}
		\end{corollary}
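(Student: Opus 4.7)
The plan is to establish the two implications separately, with $(ii) \implies (i)$ being the easier one. For this direction I would start with an arbitrary Jordan embedding $\phi : \mathcal{A}_\rho \to M_n$ and invoke Theorem \ref{thm:general form of phi} to write
$$\phi(\cdot) = S\bigl(Pg^*(\cdot) + (I-P)g^*(\cdot)^t\bigr)S^{-1}$$
for some $S \in M_n^\times$, central idempotent $P \in Z(\mathcal{A}_\rho)$, and transitive map $g : \rho \to \C^\times$. The class condition in $(ii)$ triggers Corollary \ref{cor:MAMP equivalences}, which collapses the expression to either $\phi(\cdot) = Sg^*(\cdot)S^{-1}$ or $\phi(\cdot) = Sg^*(\cdot)^t S^{-1}$. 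Triviality of $g$, together with Lemma \ref{le:g trivial iff induced map is inner automorphism}, then produces a diagonal $D \in \mathcal{D}_n^\times$ implementing $g^*$ as conjugation by $D$ on $\mathcal{A}_\rho$. Since this formula is meaningful on all of $M_n$, setting $T := SD$ exhibits $T(\cdot)T^{-1}$ or $T(\cdot)^t T^{-1}$ as a Jordan automorphism of $M_n$ extending $\phi$.

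For $(i) \implies (ii)$ I would prove the contrapositive of each of the two conditions separately. If there exist distinct classes $C_1, C_2 \in \mathcal{Q}$ with $|C_1|, |C_2| \ge 2$, then the proof of Corollary \ref{cor:MAMP equivalences} already hands me a specific Jordan embedding $\phi = P(\cdot) + (I-P)(\cdot)^t$, with $P := \sum_{i \in C_1} E_{ii}$, which is neither multiplicative nor antimultiplicative. Since every Jordan automorphism of $M_n$ is of the form $A(\cdot)A^{-1}$ or $A(\cdot)^t A^{-1}$, and hence is automatically multiplicative or antimultiplicative, no extension of this $\phi$ to $M_n$ can exist.

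For the remaining part of the contrapositive, I would pick a non-trivial transitive map $g : \rho \to \C^\times$ (which in particular forces $\rho^\times \ne \emptyset$, since otherwise $g \equiv 1$ is trivial) and consider the induced algebra automorphism $g^* : \mathcal{A}_\rho \to \mathcal{A}_\rho$, viewed as a Jordan embedding into $M_n$ fixing $\mathcal{D}_n$ pointwise. Supposing it extends to a Jordan automorphism $\Phi$ of $M_n$, the constraint $\Phi|_{\mathcal{D}_n} = \mathrm{id}$ forces the matrix implementing $\Phi$ to be diagonal in both cases $\Phi = A(\cdot)A^{-1}$ and $\Phi = A(\cdot)^t A^{-1}$. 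In the multiplicative case I would then read off $g(i,j) = A_{ii}/A_{jj}$ for all $(i,j) \in \rho$, so that $g$ separates through $i \mapsto A_{ii}$, contradicting non-triviality. In the antimultiplicative case, evaluating $\Phi$ at any $E_{ij}$ with $(i,j) \in \rho^\times$ gives a nonzero multiple of $E_{ji}$, which cannot equal the nonzero multiple $g(i,j)E_{ij}$ of $E_{ij}$ by linear independence of $E_{ij}$ and $E_{ji}$ for $i \ne j$.

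The bulk of the heavy lifting is already packaged in Theorem \ref{thm:general form of phi} and Corollary \ref{cor:MAMP equivalences}; once they are invoked, the proof amounts mostly to bookkeeping. The main subtlety I expect to have to handle carefully is in the antimultiplicative sub-case of $(i) \implies (ii)$, where one must first exclude $\rho^\times = \emptyset$ (an immediate but easy-to-overlook trivial case in which $\mathcal{A}_\rho = \mathcal{D}_n$ and every transitive map is automatically trivial) in order to guarantee the existence of an off-diagonal matrix unit on which the contradiction is produced.
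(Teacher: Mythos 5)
Your proof is correct and follows exactly the route the paper intends: the paper's own proof is a one-line citation of Theorem \ref{thm:general form of phi}, Lemma \ref{le:g trivial iff induced map is inner automorphism} and Corollary \ref{cor:MAMP equivalences}, which are precisely the ingredients you assemble (together with the standard fact, recorded in the introduction, that every nonzero Jordan endomorphism of $M_n$ has the form $A(\cdot)A^{-1}$ or $A(\cdot)^{t}A^{-1}$). The only nit is that in the antimultiplicative sub-case of $(ii)\implies(i)$ the conjugating matrix should be $T=SD^{-1}$ rather than $SD$, since $Sg^{*}(X)^{t}S^{-1}=SD^{-1}X^{t}DS^{-1}$ for diagonal $D$; this is pure bookkeeping and does not affect the argument.
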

		\begin{proof}
			This follows directly from Theorem \ref{thm:general form of phi}, Lemma \ref{le:g trivial iff induced map is inner automorphism} and Corollary \ref{cor:MAMP equivalences}.
		\end{proof}
		
		\section{Rank-one preserver}\label{sec:rank-one preserver}
		
		\subsection{On transitive maps and rank-one preservers} 
		
		We start this section by examining the connection between transitive maps and rank-one preservers on SMAs. We first state two elementary and well-known facts regarding the representation of rank-one matrices.   
		
		\begin{lemma}\label{le:uniqueness of rank-one representation}
			\phantom{x}
			\begin{enumerate}[(a)]
				\item A matrix $A \in M_n$ is rank-one if and only if it can be represented as $A=uv^*$ for some nonzero vectors $u,v \in \C^n$.
				\item Suppose that a rank-one matrix $A\in M_n$ has two distinct representations $A = u_1v_1^*= u_2v_2^*$ where $u_1, u_2, v_1, v_2 \in \C^n$. Then $u_1 \parallel u_2$ and $v_1 \parallel v_2$.
				\item  Let $u_1, u_2, v_1, v_2 \in \C^n$ be nonzero vector such that 
				$u_1v_1^* + u_2v_2^*$ is a rank-one matrix. Then $u_1 \parallel u_2$ or $v_1 \parallel v_2$.
			\end{enumerate}
		\end{lemma}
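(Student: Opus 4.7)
My plan is to handle the three parts sequentially, each reducing to a short argument about the image of the matrix.

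For (a), I would start from the standard fact that $r(A)=1$ is equivalent to $\mathrm{Im}\,A$ being one-dimensional. Picking a nonzero generator $u$ of the image, each column $Ae_j$ equals $\overline{v_j}\,u$ for a unique scalar $v_j \in \C$; collecting these into a vector $v \in \C^n$ gives $A = uv^*$, with $v \neq 0$ because otherwise $A=0$. The converse is immediate from $uv^*x = \inner{x}{v}u \in \spn\{u\}$, which shows $\mathrm{Im}(uv^*) = \spn\{u\}$ provided $v \neq 0$.

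For (b), with (a) in hand, I would simply note that $\mathrm{Im}(u_1v_1^*) = \spn\{u_1\}$ and $\mathrm{Im}(u_2v_2^*) = \spn\{u_2\}$, so $u_1v_1^* = u_2v_2^*$ forces $\spn\{u_1\} = \spn\{u_2\}$, i.e.\ $u_1 \parallel u_2$. To obtain $v_1 \parallel v_2$ I would apply the same reasoning to the conjugate transpose $v_1u_1^* = v_2u_2^*$, which is again a rank-one matrix written in two ways.

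For (c), I would argue the contrapositive: if both $\{u_1,u_2\}$ and $\{v_1,v_2\}$ are linearly independent, then $r(u_1v_1^* + u_2v_2^*) = 2$. The matrix acts on $x \in \C^n$ by $x \mapsto \inner{x}{v_1}u_1 + \inner{x}{v_2}u_2$, so its range sits inside $\spn\{u_1,u_2\}$. Linear independence of $v_1,v_2$ makes the evaluation map $x \mapsto (\inner{x}{v_1}, \inner{x}{v_2}) \in \C^2$ surjective (its transpose has trivial kernel), so both $u_1$ and $u_2$ lie in the range; combined with linear independence of $\{u_1,u_2\}$ this gives a $2$-dimensional image, contradicting rank one. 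I do not anticipate a genuine obstacle here, since each part is a direct range-dimension calculation; the only care needed is in (c), to invoke surjectivity of the evaluation map cleanly.
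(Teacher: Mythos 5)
Your argument is correct and complete; note, however, that the paper deliberately states this lemma \emph{without proof}, labelling it as ``elementary and well-known,'' so there is no proof in the paper to compare against. Your range-dimension approach is the standard one: part (a) via $\mathrm{Im}(uv^*)=\spn\{u\}$, part (b) by comparing images of $A$ and $A^*$, and part (c) by the contrapositive, using surjectivity of $x\mapsto(\inner{x}{v_1},\inner{x}{v_2})$ to force a two-dimensional range. The only point worth making explicit in (b) is that all four vectors are automatically nonzero because a rank-one matrix is nonzero, which is what licenses writing $\mathrm{Im}(u_iv_i^*)=\spn\{u_i\}$; everything else is airtight.
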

		
		As before, we make use of relations $\tripprox_0$ and $\tripprox$ (which are defined in Remark \ref{re:central decomposition}).
		\begin{lemma}\label{le:extending g trivially}
			Let $\ca{A}_\rho \subseteq M_n$ be an SMA and let $g : \rho \to \C^\times$ be a transitive map. Suppose that $$g|_{\rho \cap \{1,\ldots,n-1\}^2} \equiv 1.$$
			If $\mathop{\tripprox} \subseteq \{1,\ldots,n-1\}^2$ denotes the equivalence relation corresponding to the quasi-order $\rho \cap \{1,\ldots,n-1\}^2$, then we have the equivalence:
			$$\text{$g$ is trivial} \iff
			\begin{cases}
				(\forall i,j \in (\rho^\times)^{-1}(n))(i \tripprox j  \implies g(i,n) = g(j,n)),\\
				(\forall i,j \in (\rho^\times)(n))(i \tripprox j  \implies g(n,i) = g(n,j)).
			\end{cases}$$
		\end{lemma}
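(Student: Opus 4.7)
The plan is to prove both implications by directly manipulating a separator $s : \{1,\ldots,n\} \to \C^\times$, constructing one explicitly for the backward direction and reading off the conditions from a given one for the forward direction. For the forward implication, if $g$ is trivial via $s$, the hypothesis $g|_{\rho \cap \{1,\ldots,n-1\}^2} \equiv 1$ forces $s(i)=s(j)$ for every $(i,j) \in \rho \cap \{1,\ldots,n-1\}^2$, and taking the transitive closure makes $s$ constant on each equivalence class of $\tripprox$. The two stated conditions then follow immediately from $g(i,n)=s(i)/s(n)$ and $g(n,i)=s(n)/s(i)$.

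For the converse, I would set $s(n):=1$ and, for each equivalence class $C$ of $\tripprox$ in $\{1,\ldots,n-1\}$, define $s|_C$ as a single scalar chosen by the following rule: if some $i_0 \in C$ satisfies $(i_0,n) \in \rho^\times$, set $s|_C := g(i_0,n)$; otherwise, if some $j_0 \in C$ satisfies $(n,j_0) \in \rho^\times$, set $s|_C := 1/g(n,j_0)$; otherwise, set $s|_C := 1$. Once $s$ is shown to be well-defined, verifying $g(i,j) = s(i)/s(j)$ for all $(i,j) \in \rho$ is routine: for $(i,j) \in \rho \cap \{1,\ldots,n-1\}^2$ both sides equal $1$ since $i$ and $j$ lie in a common class of $\tripprox$, while the pairs $(i,n),(n,i) \in \rho^\times$ are covered directly by the defining formula for $s|_{[i]_{\tripprox}}$, using transitivity $g(i,n)g(n,i)=g(i,i)=1$ when both occur.

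The crux is the well-definedness of $s|_C$, which breaks into three compatibility checks on a fixed class $C$. If two elements $i_0, i_0' \in C$ both satisfy $(i_0,n),(i_0',n) \in \rho^\times$, hypothesis (i) forces $g(i_0,n) = g(i_0',n)$; symmetrically, hypothesis (ii) handles two backward-type representatives. The main obstacle is the mixed case, where $i_0, j_0 \in C$ satisfy $(i_0,n) \in \rho^\times$ and $(n,j_0) \in \rho^\times$. Here transitivity of $\rho$ forces $(i_0,j_0) \in \rho$; if $i_0 \ne j_0$, then $(i_0,j_0) \in \rho^\times \cap \{1,\ldots,n-1\}^2$, so by hypothesis $g(i_0,j_0)=1$, and transitivity of $g$ yields $g(i_0,n)g(n,j_0) = g(i_0,j_0) = 1$, identifying the two candidate values $g(i_0,n)$ and $1/g(n,j_0)$. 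The case $i_0 = j_0$ reduces to $g(i_0,i_0)=1$ and gives the same conclusion, so the construction is consistent and the proof is complete.
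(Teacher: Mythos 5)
Your proposal is correct and follows essentially the same route as the paper's proof: read off constancy of $s$ on $\tripprox$-classes for the forward direction, and for the converse set $s(n)=1$, define $s$ classwise via $g(i,n)$ or $1/g(n,i)$, and check well-definedness in the same three cases (two forward representatives, two backward representatives, and the mixed case resolved by $g(i_0,n)g(n,j_0)=g(i_0,j_0)=1$). The only cosmetic difference is that you single out the subcase $i_0=j_0$ explicitly, which the paper leaves implicit.
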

		\begin{proof}
			\fbox{$\implies$} Suppose that $g$ is trivial and separates through the map $s : \{1,\ldots,n\} \to \C^\times$. Note that for all $(i,j) \in \rho \cap \{1,\ldots,n-1\}^2$ we have $$1 = g(i,j) =\frac{s(i)}{s(j)} \implies s(i) =s(j).$$
			In particular, for all $1 \le i,j \le n-1$ we conclude 
			$$i \tripprox_0 j \implies s(i) = s(j)$$
			and then inductively
			\begin{equation}\label{eq:s is constant on equivalence classes}
				i \tripprox j \implies s(i) = s(j).
			\end{equation}
			Let $i,j \in (\rho^\times)^{-1}(n)$ such that $i \tripprox j$. We claim that $g(i,n) = g(j,n)$. We have
			$$g(i,n) = \frac{s(i)}{s(n)} \stackrel{\eqref{eq:s is constant on equivalence classes}}= \frac{s(j)}{s(n)} = g(j,n).$$
			This proves the first implication. The second implication is proved similarly.
			
			\smallskip
			
			\fbox{$\impliedby$} Suppose that the two implications hold true. Define $s : \{1,\ldots,n\} \to \C^\times$ by first setting $s(n) := 1$. For $1 \le j \le n-1$, let $[j]$ be the equivalence class of $\tripprox$ containing $j$, and set
			$$s(j) := \begin{cases}
				g(i,n), &\text{ if $i \in [j] \cap (\rho^\times)^{-1}(n)$},\\
				\frac1{g(n,i)}, &\text{ if $i \in [j] \cap (\rho^\times)(n)$},\\
				1, &\text{ if $[j] \cap (\rho^\times)^{-1}(n) = [j] \cap (\rho^\times)(n) = \emptyset$}.
			\end{cases}$$
			Note that $s$ is well-defined. Indeed, if $i_1,i_2 \in [j] \cap (\rho^\times)^{-1}(n)$, we have $i_1 \tripprox i_2$ and therefore $g(i_1,n) = g(i_2,n)$. The case $i_1,i_2 \in [j] \cap (\rho^\times)(n)$ is similar. Suppose now that $i_1 \in [j] \cap (\rho^\times)^{-1}(n)$ and also $i_2 \in [j] \cap (\rho^\times)(n)$. Then $(i_1,n),(n,i_2) \in \rho$ imply $(i_1,i_2) \in \rho$ and hence by transitivity
			$$g(i_1,n)g(n,i_2) = g(i_1,i_2) = 1 \implies g(i_1,n) = \frac1{g(n,i_2)},$$
			which is exactly what we wanted to show.\smallskip
			
			In particular, $s$ is constant on each equivalence class of $\tripprox$. Now we prove that $g$ separates through $s$. If $(i,j) \in \rho \cap \{1,\ldots,n-1\}^2$, then clearly $i \tripprox_0 j$ and hence $s(i) = s(j)$ which implies
			$$g(i,j) = 1 = \frac{s(i)}{s(j)}.$$
			If $(j,n) \in \rho^\times$, then $s(j) = g(j,n)$ by definition and therefore
			$$g(j,n) = \frac{s(j)}{s(n)}.$$
			Similarly we cover the case $(n,j) \in \rho^\times$.
		\end{proof}
		\begin{lemma}\label{le:rank one minors criterion}
			Let $X \in M_n$. Then $X$ has rank one if and only if $X \ne 0$ and for all $1 \le i,j,k,l \le n$ with $i \ne k$, $j \ne l$ holds
			$$\begin{vmatrix}
				X_{ij} & X_{il} \\ X_{kj} & X_{kl}
			\end{vmatrix} = 0.$$
		\end{lemma}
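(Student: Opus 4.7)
The claim is the classical fact that rank one is detected by the vanishing of all $2\times 2$ minors, so the plan is to verify the two directions separately, starting from the rank-one representation provided by Lemma \ref{le:uniqueness of rank-one representation} (a).

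For the forward direction, assume $r(X) = 1$. Then $X \neq 0$ and by Lemma \ref{le:uniqueness of rank-one representation} (a) we may write $X = uv^*$ for some nonzero $u, v \in \C^n$, so that $X_{ij} = u_i \overline{v_j}$ for all $1 \le i, j \le n$. A direct computation then gives
$$X_{ij}X_{kl} - X_{il}X_{kj} = u_i \overline{v_j} u_k \overline{v_l} - u_i \overline{v_l} u_k \overline{v_j} = 0$$
for any indices (regardless of whether $i=k$ or $j=l$), which establishes the vanishing of every such $2\times 2$ determinant.

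For the converse, assume $X \neq 0$ and that all the displayed $2\times 2$ determinants vanish. Since $X \neq 0$, there exist indices $i_0, j_0$ with $X_{i_0 j_0} \neq 0$. The plan is to recover a rank-one factorization $X = u v^*$ explicitly. Define $u \in \C^n$ by $u_i := X_{i j_0}$ and $v \in \C^n$ by $\overline{v_j} := X_{i_0 j}/X_{i_0 j_0}$. For any $(i,j)$, if $i = i_0$ or $j = j_0$ the identity $X_{ij} = u_i \overline{v_j}$ follows immediately from the definitions; otherwise $i \neq i_0$ and $j \neq j_0$, so the minor hypothesis applied to $(i, j, i_0, j_0)$ yields $X_{ij} X_{i_0 j_0} = X_{i j_0} X_{i_0 j}$, i.e. $X_{ij} = u_i \overline{v_j}$. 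Hence $X = u v^*$, and since $u \neq 0$ (as $u_{i_0} = X_{i_0 j_0} \neq 0$) and $v \neq 0$ (similarly), Lemma \ref{le:uniqueness of rank-one representation} (a) gives $r(X) = 1$.

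There is no real obstacle here: the statement is essentially a repackaging of the elementary fact that the rank equals the maximal size of a nonvanishing minor, and the only mild care needed is in the converse, where one should verify the factorization $X_{ij} = u_i \overline{v_j}$ also in the degenerate rows/columns $i = i_0$ or $j = j_0$ (where the $2\times 2$ minor hypothesis does not directly apply because the statement requires $i \neq k$ and $j \neq l$).
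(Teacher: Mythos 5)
Your proof is correct and complete. The paper does not actually prove this lemma --- it simply cites the section on minors and cofactors in Prasolov's book --- so there is no argument to compare against; your explicit two-direction verification (factoring $X=uv^*$ for the forward direction, and reconstructing $u_i := X_{ij_0}$, $\overline{v_j} := X_{i_0j}/X_{i_0j_0}$ from a nonzero entry for the converse) is a clean, self-contained substitute, and you correctly handle the only delicate point, namely that the minor hypothesis is stated only for $i\ne k$, $j\ne l$, so the rows and columns through $(i_0,j_0)$ must be checked directly from the definitions of $u$ and $v$.
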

		\begin{proof}
			See for example the section on Minors and cofactors in \cite{Prasolov}.
		\end{proof}
		
		
		Let $\rho$ be a quasi-order on $\{1,\ldots,n\}$. Let $1 \le i,j,k,l \le n$. We say that $(i,j), (i,l),(k,j),(k,l)\in \rho$ form a \emph{rectangle} of the SMA $\ca{A}_\rho$ if $i \ne k$, $j \ne l$.
		
		\begin{lemma}\label{le:rank preserver rectangles}
			Let $\rho$ be a quasi-order on $\{1,\ldots,n\}$ and let $g : \rho \to \C^\times$ be a transitive map. Then the induced automorphism $g^* : \ca{A}_\rho \to \ca{A}_\rho$ is a rank-one preserver if and only if for every rectangle $(i,j), (i,l),(k,j),(k,l) \in \rho$ we have
			\begin{equation}\label{eq:g-minor is zero}
				\begin{vmatrix}
					g(i,j) & g(i,l) \\ g(k,j) & g(k,l)
				\end{vmatrix} = 0.
			\end{equation}
		\end{lemma}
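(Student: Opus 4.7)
The plan is to prove both directions by translating rank-one membership into the vanishing of all $2\times 2$ minors via Lemma~\ref{le:rank one minors criterion}, and then comparing the corresponding minors of $X$ and of $g^*(X)$ entry by entry.

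For the forward direction, I would fix a rectangle $(i,j), (i,l), (k,j), (k,l) \in \rho$ and consider the rank-one test matrix
$$
X := E_{ij} + E_{il} + E_{kj} + E_{kl} = (e_i + e_k)(e_j + e_l)^*,
$$
which lies in $\ca{A}_\rho$ because all four supporting pairs are in $\rho$. Applying $g^*$ gives
$$
g^*(X) = g(i,j)E_{ij} + g(i,l)E_{il} + g(k,j)E_{kj} + g(k,l)E_{kl}.
$$
Since $g^*$ is a rank-one preserver, $g^*(X)$ has rank one, so by Lemma~\ref{le:rank one minors criterion} its $2\times 2$ minor at rows $i,k$ and columns $j,l$ must vanish, which is exactly \eqref{eq:g-minor is zero}.

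For the converse, take an arbitrary rank-one $X \in \ca{A}_\rho$ and fix indices $i \ne k$, $j \ne l$; I need to show the $2 \times 2$ minor of $g^*(X)$ at rows $i,k$, columns $j,l$ vanishes. Observe that $(g^*(X))_{pq} = g(p,q) X_{pq}$ if $(p,q) \in \rho$, and $(g^*(X))_{pq} = 0 = X_{pq}$ otherwise; in particular $(g^*(X))_{pq} = 0$ if and only if $X_{pq} = 0$. If all four positions $(i,j), (i,l), (k,j), (k,l)$ lie in $\rho$, i.e.\ form a rectangle, the minor of $g^*(X)$ equals
$$
g(i,j)g(k,l) X_{ij}X_{kl} - g(i,l)g(k,j) X_{il}X_{kj},
$$
which by the hypothesis \eqref{eq:g-minor is zero} equals $g(i,j)g(k,l)\bigl(X_{ij}X_{kl} - X_{il}X_{kj}\bigr) = 0$, since the corresponding minor of $X$ vanishes by Lemma~\ref{le:rank one minors criterion}.

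The only mildly delicate step, which is where I expect the main (if minor) obstacle to be, is the case where at least one of the four positions is not in $\rho$: then \eqref{eq:g-minor is zero} gives no direct information. Say $(i,j) \notin \rho$, so $X_{ij} = 0 = (g^*(X))_{ij}$. The rank-one condition on $X$ forces $X_{il} X_{kj} = 0$, hence one of $(g^*(X))_{il}$, $(g^*(X))_{kj}$ is also zero, and the minor of $g^*(X)$ again vanishes. The remaining subcases are handled identically by symmetry. Combined with $g^*(X) \ne 0$ (which holds because $g^*$ is an automorphism and $X \ne 0$), Lemma~\ref{le:rank one minors criterion} then gives that $g^*(X)$ has rank one.
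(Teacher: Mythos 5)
Your proposal is correct and follows essentially the same route as the paper: both directions test the rank-one criterion of Lemma~\ref{le:rank one minors criterion} on the matrix $E_{ij}+E_{il}+E_{kj}+E_{kl}$ and on the entrywise-rescaled minors of a general rank-one $X$. The only cosmetic difference is in the case where a position falls outside $\rho$: the paper observes that the $2\times 2$ submatrix of $X$ then has a zero row or column, while you deduce $X_{il}X_{kj}=0$ directly from the vanishing minor of $X$ — both arguments are valid and equivalent in substance.
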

		\begin{proof}
			\fbox{$\implies$} Suppose that $g^*$ is a rank-one preserver and let  $(i,j), (i,l),(k,j),(k,l) \in \rho$ be a rectangle of $\ca{A}_\rho$. The matrix
			$$g^*(E_{ij} + E_{il} + E_{kj} + E_{kl}) = g(i,j)E_{ij} + g(i,l)E_{il} + g(k,j)E_{kj} + g(k,l)E_{kl}$$
			has rank one, which by Lemma \ref{le:rank one minors criterion} implies the desired result.\smallskip
			
			\fbox{$\impliedby$} Conversely, suppose that $g$ satisfies the stated condition and let $X\in\ca{A}_\rho$ be a rank-one matrix. We wish to prove that $g^*(X)$ is a rank-one matrix. Since clearly $g^*(X) \ne 0$, by Lemma \ref{le:rank one minors criterion} it remains to verify the determinant condition. Fix $1 \le i,j,k,l \le n$ where $i \ne k$, $j \ne l$. If $\{(i,j), (i,l),(k,j),(k,l)\} \subsetneq \rho$, then the submatrix
			$\begin{bmatrix}
				X_{ij} & X_{il} \\ X_{kj} & X_{kl}
			\end{bmatrix}$
			has at least one zero-row or zero-column, so the same holds for $\begin{bmatrix}
				g^*(X)_{ij} & g^*(X)_{il} \\ g^*(X)_{kj} & g^*(X)_{kl}
			\end{bmatrix}.$ On the other hand, if $(i,j), (i,l),(k,j),(k,l)$ is a rectangle of $\rho$, we have
			$$\begin{vmatrix}
				X_{ij} & X_{il} \\ X_{kj} & X_{kl}
			\end{vmatrix} = 0 \implies X_{ij}X_{kl} = X_{il}X_{kj}$$
			and therefore
			\begin{align*}
				\begin{vmatrix}
					g^*(X)_{ij} & g^*(X)_{il} \\ g^*(X)_{kj} & g^*(X)_{kl}
				\end{vmatrix} &=
				\begin{vmatrix}
					g(i,j)X_{ij} & g(i,l)X_{il} \\ g(k,j)X_{kj} & g(k,l)X_{kl}
				\end{vmatrix} = \begin{vmatrix}
					g(i,j) & g(i,l) \\ g(k,j) & g(k,l)
				\end{vmatrix}(X_{ij}X_{kl}) \stackrel{\eqref{eq:g-minor is zero}}= 0,
			\end{align*}
			which completes the proof.
		\end{proof}
		
		\begin{corollary}
			Let $\ca{A}_\rho \subseteq M_n$ be an SMA without rectangles. Suppose that $g : \rho \to \C^\times$ is a transitive map. Then the induced automorphism $g^* : \ca{A}_\rho \to \ca{A}_\rho$ is a rank-one preserver.
		\end{corollary}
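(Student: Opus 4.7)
The plan is to invoke the immediately preceding Lemma \ref{le:rank preserver rectangles} directly. That lemma gives a necessary and sufficient condition for the induced automorphism $g^*$ to be a rank-one preserver: for every rectangle $(i,j),(i,l),(k,j),(k,l)\in\rho$ (with $i\ne k$, $j\ne l$), the $2\times 2$ determinantal identity
\[
\begin{vmatrix} g(i,j) & g(i,l) \\ g(k,j) & g(k,l) \end{vmatrix}=0
\]
must hold.

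Under the hypothesis that $\ca{A}_\rho$ contains no rectangles, the set of quadruples to be checked is empty, so the condition is vacuously satisfied. Hence $g^*$ is a rank-one preserver.

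I don't anticipate any obstacle here; the corollary is simply the vacuous case of Lemma \ref{le:rank preserver rectangles}. The only thing to be careful about is to make sure that the definition of \emph{rectangle} used in the statement of the corollary agrees with the one set up just before Lemma \ref{le:rank preserver rectangles} (i.e., four indices $(i,j),(i,l),(k,j),(k,l)\in\rho$ with $i\ne k$ and $j\ne l$), which it does by construction. Therefore the proof reduces to a single sentence citing the previous lemma.
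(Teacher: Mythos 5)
Your proof is correct and matches the paper's intent exactly: the paper states this corollary without proof precisely because it is the vacuous case of Lemma \ref{le:rank preserver rectangles}, which is the one-line argument you give.
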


		\begin{lemma}\label{le:rank of canonical Jordan homomorphism}
			Let $\ca{A}_{\rho} \subseteq M_n$ be an SMA. For every central idempotent $P \in Z(\ca{A}_\rho)$ and $X \in \ca{A}_\rho$ we have the following equality of ranks:
			$$r(X) = r(PX + (I-P)X^t).$$
		\end{lemma}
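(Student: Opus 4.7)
The plan is to exploit the structural rigidity of central idempotents in an SMA: I claim that $P$ forces $X$ itself to split as a block-diagonal sum along a partition of $\{1,\ldots,n\}$, and the map $X \mapsto PX + (I-P)X^t$ merely transposes one of the two blocks---an operation that preserves rank.

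First, by Remark \ref{re:central decomposition}, the set $\mathcal{U} := \{i : P_{ii} = 1\}$ is a union of $\tripprox$-equivalence classes, where $\tripprox$ is the transitive closure of $\rho \cup \rho^t$. The key consequence is that for any $(i,j) \in \rho$ with $i \neq j$, one has $i \tripprox_0 j$, hence $i \tripprox j$, so $i$ and $j$ lie in the same class and therefore both belong to $\mathcal{U}$ or both to $\mathcal{U}^c := \{1,\ldots,n\}\setminus \mathcal{U}$. Since $X \in \mathcal{A}_\rho$ is supported in $\rho$, this observation yields the decomposition
$$X = PXP + (I-P)X(I-P),$$
which can be checked entrywise (diagonal entries split via $P_{ii} + (1-P_{ii}) = 1$, and off-diagonal entries live in exactly one of the two blocks).

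From here, using that $P$ is a diagonal idempotent (so $P^t = P$ and $P(I-P) = 0$), a direct manipulation gives
$$PX + (I-P)X^t = PXP + (I-P)X^t(I-P).$$
Both summands on the right have their nonzero entries confined to the block of rows and columns indexed by $\mathcal{U}$ and $\mathcal{U}^c$, respectively, so their row-supports and column-supports are pairwise disjoint. Rank is additive over such direct sums; therefore
$$r(PX + (I-P)X^t) = r(PXP) + r((I-P)X^t(I-P)).$$
Applying the same rank additivity to the decomposition of $X$ and combining with $r((I-P)X^t(I-P)) = r(((I-P)X(I-P))^t) = r((I-P)X(I-P))$ closes the argument.

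The entire proof reduces to the single structural input from Remark \ref{re:central decomposition}; everything else is routine bookkeeping, so I do not anticipate any genuine obstacle.
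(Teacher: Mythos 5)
Your proof is correct and follows essentially the same route as the paper's: both arguments reduce to the additivity of rank for matrices supported on disjoint sets of rows and columns, combined with the invariance of rank under transposition, and both ultimately rest on the centrality of $P$. The only cosmetic difference is that you use the two-sided block decomposition $X = PXP + (I-P)X(I-P)$, whereas the paper splits one-sidedly as $X = PX + (I-P)X$ and notes $(I-P)X^t = \left((I-P)X\right)^t$.
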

		\begin{proof}
			It is easy to show that if two matrices $A,B \in M_n$ are supported on mutually disjoint sets of rows or columns, then $$r(A+B) = r(A)+r(B).$$
			Having this in mind, since $Z(\ca{A}_{\rho}) \subseteq \ca{D}_n$, we have $(I-P)^t = I-P \in Z(\ca{A}_{\rho})$. We obtain
			\begin{align*}
				r(X) &= r(PX + (I-P)X) = r(PX) + r((I-P)X) = r(PX) + r((I-P)X^t) \\
				&= r(PX + (I-P)X^t).
			\end{align*}
		\end{proof}
		\subsection{Main result}
		
		We are now ready to prove the main result of this section.
		
		\begin{theorem}\label{thm:rank-one preserver}
			Let $\ca{A}_\rho \subseteq M_n$ be an SMA.
			\begin{enumerate}[(a)]
				\item Let $\phi : \ca{A}_\rho \to M_n$ be a linear unital map preserving rank-one matrices. Then $\phi$ is a Jordan embedding.
				\item Let $\phi : \ca{A}_\rho \to M_n$ be a Jordan homomorphism which satisfies $\phi(E_{ij}) \ne 0$ for all $(i,j) \in \rho$. Then $\phi$ is a rank-one preserver if and only if the associated transitive map $g : \rho \to \C^\times$ obtained from Theorem \ref{thm:general form of phi} satisfies
				$$
				\begin{vmatrix}
					g(i,j) & g(i,l) \\ g(k,j) & g(k,l)
				\end{vmatrix} = 0
				$$
				for every rectangle $(i,j), (i,l),(k,j),(k,l)$ of $\ca{A}_\rho$.
			\end{enumerate}
		\end{theorem}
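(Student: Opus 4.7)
For \textbf{part (a)}, my first move is to exploit unitality. Setting $P_i := \phi(E_{ii})$ and writing each $P_i = u_i v_i^*$, the relation $\sum_{i=1}^n P_i = I$ translates, via the matrix $U \in M_n$ whose columns are the $u_i$, into $UV^* = I$, where $V$ has columns $v_i$. Hence $U$ is invertible, $V^* = U^{-1}$, and $v_i^* u_j = \delta_{ij}$, so the $P_i$'s form a complete family of mutually orthogonal rank-one idempotents. Conjugating $\phi$ by $U$ (which preserves linearity, unitality, and rank-one preservation) reduces to the case $\phi(E_{ii}) = E_{ii}$ for all $i$.

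Next, I plan to show that $\phi(E_{ij}) \in \C^\times E_{ij} \cup \C^\times E_{ji}$ for every $(i,j) \in \rho^\times$. Writing $\phi(E_{ij}) = uv^*$ and using that $E_{ii} + \lambda E_{ij}$ is rank one for every $\lambda \in \C$, so is $E_{ii} + \lambda uv^*$. A $2 \times 2$ minor at rows $i, i'$ and columns $i, \ell'$ (with $i', \ell' \ne i$) evaluates to $\lambda u_{i'}\overline{v_{\ell'}}$, forcing $u_{i'}\overline{v_{\ell'}} = 0$, and hence $u \parallel e_i$ or $v \parallel e_i$. Repeating with $E_{jj} + \lambda E_{ij}$ yields the analogous parallelism with $e_j$, and since $i \ne j$ the only compatible options are $\phi(E_{ij}) \in \C^\times E_{ij}$ or $\C^\times E_{ji}$.

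The heart of the proof is then to reproduce Lemma \ref{le:MAMP}'s consistency and transitivity properties using only the rank-one hypothesis. Defining $\rho^\phi_M$ and $\rho^\phi_A$ as in Lemma \ref{le:MAMP}, the key rank-one input is $(e_i + e_j)(e_j + e_k)^* = E_{ij} + E_{ik} + E_{jj} + E_{jk}$, which lies in $\ca{A}_\rho$ whenever $(i,j),(j,k) \in \rho^\times$ (and then $(i,k) \in \rho$ by transitivity of $\rho$). A case analysis on the directions of $\phi(E_{ij}), \phi(E_{jk}), \phi(E_{ik})$ shows that mixed choices produce two nonzero entries in distinct rows and columns, contradicting rank-one preservation; in the consistent cases, proportionality of columns $j$ and $k$ of the image forces the multiplicativity of the scaling constants. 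Analogous arguments applied to $e_i(e_j + e_\ell)^*$ and $(e_i + e_k)e_j^*$ handle the row- and column-wise cases of Lemma \ref{le:MAMP}(c). This packages into a central idempotent $P \in Z(\ca{A}_\rho)$ and a transitive $g : \rho \to \C^\times$ with $\phi(\cdot) = Pg^*(\cdot) + (I-P)g^*(\cdot)^t$, which is directly verified to be a Jordan homomorphism (since $P$ is diagonal, commutes with both $g^*(X)$ and $g^*(X)^t$, and is orthogonal to $I-P$) and is injective by the injectivity of $g^*$. The main obstacle is exactly this recycling of Lemma \ref{le:MAMP}'s internal logic---originally driven by the Jordan identity---purely through rank-one arguments on a carefully chosen family of test matrices.

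For \textbf{part (b)}, since $\phi$ is a Jordan homomorphism with $\phi(E_{ij}) \ne 0$ for all $(i,j) \in \rho$, Theorem \ref{thm:general form of phi} immediately gives $\phi(\cdot) = S(Pg^*(\cdot) + (I-P)g^*(\cdot)^t)S^{-1}$ for some $S \in M_n^\times$, central idempotent $P \in Z(\ca{A}_\rho)$, and transitive $g$. Conjugation preserves rank, and Lemma \ref{le:rank of canonical Jordan homomorphism} reduces $r(Pg^*(X) + (I-P)g^*(X)^t)$ to $r(g^*(X))$. Thus $\phi$ preserves rank one if and only if $g^*$ does, which by Lemma \ref{le:rank preserver rectangles} is equivalent to the stated determinant condition on every rectangle of $\ca{A}_\rho$.
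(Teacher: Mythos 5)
Your proposal is correct, and part (b) coincides with the paper's argument, but your part (a) takes a genuinely different route. The paper never normalizes the diagonal: it keeps general rank-one factorizations $\phi(E_{ij})=u_{ij}v_{ij}^*$ throughout, establishes a long chain of parallelism and orthogonality claims about the vectors $u_{ij},v_{ij}$ (including the ``rectangle inner products'' identity), and then verifies the Jordan identity $\phi(E_{ij}\circ E_{kl})=\phi(E_{ij})\circ\phi(E_{kl})$ directly by a five-case analysis, after which injectivity follows from Theorem \ref{thm:general form of phi}. You instead conjugate by the invertible matrix $U$ built from the $u_{ii}$ to reduce to $\phi(E_{ii})=E_{ii}$, and then the $2\times 2$ minors of the rank-one images of $E_{ii}+\lambda E_{ij}$ and $E_{jj}+\lambda E_{ij}$ (the $\lambda^2$ terms cancel, leaving $\lambda u_{i'}\overline{v_{\ell'}}$) pin down $\phi(E_{ij})\in\C^\times E_{ij}\cup\C^\times E_{ji}$ at once; from there you rerun the combinatorics of Lemma \ref{le:MAMP} with rank-one test matrices such as $(e_i+e_j)(e_j+e_k)^*$, $e_i(e_j+e_\ell)^*$ and $(e_i+e_k)e_j^*$ in place of the Jordan identity (I checked the key minors: mixed multiplicative/antimultiplicative choices always produce a nonzero $2\times 2$ minor against the $E_{jj}$ entry, including the degenerate case $i=k$, and the vanishing minor in the consistent case yields exactly the transitivity $\alpha_{ij}\alpha_{jk}=\alpha_{ik}$). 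What your normalization buys is rigidity up front and a much shorter path to the explicit form $Pg^*(\cdot)+(I-P)g^*(\cdot)^t$; what the paper's route buys is that it never has to re-derive the quasi-order/central-idempotent bookkeeping of Lemma \ref{le:MAMP} from rank-one data, since it lands directly on the Jordan identity and lets Theorem \ref{thm:general form of phi} do that packaging. Your sketch of the consistency step is compressed, but every case I tested closes, so I see no genuine gap.
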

		The following remark justifies the claim of Theorem \ref{thm:rank-one preserver} (b).
		\begin{remark}
			If $\phi : \ca{A}_\rho \to M_n$ is a Jordan embedding, then the associated transitive map $g : \rho \to \C^\times$ obtained from Theorem \ref{thm:general form of phi} is not unique. However, if $g, h : \rho \to \C^\times$ are two such maps, then there exists a function $s : \{1,\ldots,n\} \to \C^\times$ such that $h(i,j) = \frac{s(i)}{s(j)}g(i,j)$ for all $(i,j) \in \rho$. Indeed, suppose that
			$$S(Pg^*(\cdot) + (I-P)g^*(\cdot)^t)S^{-1}  = T(Qh^*(\cdot) + (I-Q)h^*(\cdot)^t)T^{-1}$$
			for some $S,T \in M_n^\times$, central idempotents $P,Q \in Z(\ca{A}_\rho)$. Denote $\Lambda_n :=\diag(1,\ldots,n) \in \ca{D}_n$ and note that
			\begin{align*}
				S\Lambda_n S^{-1}&= S(Pg^*(\Lambda_n) + (I-P)g^*(\Lambda_n)^t)S^{-1}  = T(Qh^*(\Lambda_n) + (I-Q)h^*(\Lambda_n)^t)T^{-1} \\
				&= T\Lambda_n T^{-1},
			\end{align*}
			which implies $T^{-1}S \leftrightarrow \Lambda_n$ and hence $S = TD$ for some diagonal matrix $D \in \ca{D}_n^\times$. By plugging this in and cancelling $T$ from both sides, we obtain
			$$D(P(g^*(\cdot) + (I-P)g^*(\cdot)^t)D^{-1}  = Qh^*(\cdot) + (I-Q)h^*(\cdot)^t.$$
			Now let $(i,j) \in \rho^\times$ be arbitrary. Since $P$ and $Q$ are central, by Remark \ref{re:central decomposition} we certainly have $P_{ii} = P_{jj}$ and $Q_{ii} = Q_{jj}$. We consider four cases:
			\begin{itemize}
				\item If $P_{ii} = P_{jj} = Q_{ii} = Q_{jj} = 1$, then we obtain
				$$\frac{D_{ii}}{D_{jj}}g(i,j)E_{ij} = D(g(i,j)E_{ij})D^{-1} = h(i,j)E_{ij} \implies h(i,j) = \frac{D_{ii}}{D_{jj}}g(i,j).$$
				\item If $P_{ii} = P_{jj} = 1$ and $Q_{ii} = Q_{jj} = 0$, then we obtain
				$$\frac{D_{ii}}{D_{jj}}g(i,j)E_{ij} = D(g(i,j)E_{ij})D^{-1} =(h(i,j)E_{ij})^t = h(i,j)E_{ji},$$
				which is a contradiction. 
				\item The same argument also shows that the case $P_{ii} = P_{jj} = 0$ and $Q_{ii} = Q_{jj} = 1$ is also not possible.
				\item If $P_{ii} = P_{jj} = Q_{ii} = Q_{jj} = 0$, then we obtain
				$$\frac{D_{jj}}{D_{ii}}g(i,j)E_{ji} = D(g(i,j)E_{ij})^t D^{-1} = (h(i,j)E_{ij})^t = h(i,j)E_{ji} \implies h(i,j) = \frac{D_{jj}}{D_{ii}}g(i,j).$$
				Overall, if we define $s : \{1,\ldots,n\} \to \C^\times$ by $$
				s(i) :=\begin{cases}  D_{ii}, & \quad \mbox{ if } P_{ii}=1,\\
					\frac{1}{D_{ii}}, &  \quad \mbox{ otherwise}, 
				\end{cases}
				$$ 
				we conclude
				$$h(i,j) = \frac{s(i)}{s(j)}g(i,j), \qquad \text{ for all }(i,j) \in \rho.$$
				In particular, 
				$$\begin{vmatrix}
					h(i,j) & h(i,l) \\ h(k,j) & h(k,l)
				\end{vmatrix} = \begin{vmatrix}
					\frac{s(i)}{s(j)}g(i,j) & \frac{s(i)}{s(l)} g(i,l) \\ \frac{s(k)}{s(j)} g(k,j) & \frac{s(k)}{s(l)} g(k,l)
				\end{vmatrix} = \underbrace{\frac{s(i)s(k)}{s(j)s(l)}}_{\ne 0}\begin{vmatrix}
					g(i,j) & g(i,l) \\ g(k,j) & g(k,l)
				\end{vmatrix},$$
				which shows that the condition from Theorem \ref{thm:rank-one preserver} (b) is unambiguously defined (i.e.\ it is independent of the choice of the particular transitive map). 
				
			\end{itemize}
		\end{remark}
		\begin{proof}[Proof of Theorem \ref{thm:rank-one preserver}]
			First we prove $(a)$. By Lemma \ref{le:uniqueness of rank-one representation}, for each $(i,j) \in \rho$ we can choose $u_{ij}, v_{ij} \in \C^n$ such that $\phi(E_{ij}) = u_{ij}v_{ij}^*$.
			\begin{claim}\label{cl:ljiljana}\phantom{x}
				\begin{enumerate}[(a)]
					\item For all $1 \le i \le n$ and distinct $j,k \in \rho(i)$ we have either $u_{ij} \parallel u_{ik}$ or $v_{ij} \parallel v_{ik}$.
					\item For all $1 \le i \le n$ we have 
					$$\dim \spn\{u_{ij} : j \in \rho(i)\} = 1 \qquad \text{ or }\qquad \dim \spn\{v_{ij} : j \in \rho(i)\} = 1.$$
					If $(\rho^\times)(i)$ is nonempty, then the disjunction is exclusive.
					\item For all $1 \le i \le n$ we have the implications
					\begin{itemize}
						\item $\dim\spn\{u_{ij} : j \in \rho(i)\} = 1 \implies \{v_{ij} : j \in \rho(i)\}$ is linearly independent in $\C^n$.
						\item $\dim\spn\{v_{ij} : j \in \rho(i)\} = 1 \implies \{u_{ij} : j \in \rho(i)\}$ is linearly independent in $\C^n$.
					\end{itemize}
				\end{enumerate}
				These are rowwise versions; the columnwise versions of the claims also hold true:
				\begin{enumerate}[(a')]
					\item For all $1 \le j \le n$ and distinct $i,k \in \rho^{-1}(j)$ we have either $u_{ij} \parallel u_{kj}$ or $v_{ij} \parallel v_{kj}$.
					\item For all $1 \le j \le n$ we have 
					$$\dim\spn\{u_{ij} : i \in \rho^{-1}(j)\} = 1 \qquad \text{ or }\qquad \dim \spn\{v_{ij} : i \in \rho^{-1}(j)\} = 1.$$
					If $(\rho^\times)^{-1}(j)$ is nonempty, then the disjunction is exclusive.
					\item For all $1 \le i \le n$ we have the implications
					\begin{itemize}
						\item $\dim\spn\{u_{ij} : i \in \rho^{-1}(j)\} = 1 \implies \{v_{ij} : i \in \rho^{-1}(j)\}$ is linearly independent in $\C^n$.
						\item $\dim\spn\{v_{ij} : i \in \rho^{-1}(j)\} = 1 \implies \{u_{ij} : i \in \rho^{-1}(j)\}$ is linearly independent in $\C^n$.
					\end{itemize}
				\end{enumerate}
			\end{claim}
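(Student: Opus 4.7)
The plan is to derive all six statements of the Claim from two simple inputs: the fact that $\ca{A}_\rho$ contains a rich supply of rank-one matrices supported in each single row $\{i\}\times \rho(i)$, and Lemma~\ref{le:uniqueness of rank-one representation}. The key observation is that for any distinct $j,k \in \rho(i)$, the matrix $E_{ij} + E_{ik} = e_i(e_j + e_k)^*$ has rank one, so $\phi(E_{ij}) + \phi(E_{ik}) = u_{ij}v_{ij}^* + u_{ik}v_{ik}^*$ is rank one, and Lemma~\ref{le:uniqueness of rank-one representation}(c) yields (a) immediately. The columnwise (a') follows by the same argument applied to $E_{ij} + E_{kj} = (e_i + e_k)e_j^*$.

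For (b), I would argue by contradiction, assuming both spans have dimension at least $2$. Using (a), first pick $j_1, j_2 \in \rho(i)$ with $u_{ij_1} \not\parallel u_{ij_2}$, which forces $v_{ij_1} \parallel v_{ij_2}$. For an arbitrary $k \in \rho(i)$, two cases arise: if $u_{ik} \parallel u_{ij_1}$, then $u_{ik} \not\parallel u_{ij_2}$ (transitivity of $\parallel$ on nonzero vectors), so (a) applied to $\{j_2, k\}$ forces $v_{ik} \parallel v_{ij_2}$; otherwise (a) on $\{j_1, k\}$ directly yields $v_{ik} \parallel v_{ij_1}$. Either way $v_{ik}$ lies on the line through $v_{ij_1}$, forcing $\dim\spn\{v_{ij} : j \in \rho(i)\} = 1$ and contradicting the assumption.

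For the exclusivity in (b) when $(\rho^\times)(i) \ne \emptyset$, if both spans were one-dimensional then every $\phi(E_{ij})$ ($j \in \rho(i)$) would lie in a common one-dimensional subspace $\spn\{uv^*\}$. Since $|\rho(i)| \ge 2$, the restriction of $\phi$ to $\spn\{E_{ij} : j \in \rho(i)\}$ would have a nontrivial kernel; but any nonzero element of this kernel is of the form $e_iw^*$ with $w \ne 0$, hence rank one, contradicting rank-one preservation. The implication (c) uses the same idea: after absorbing scalars so that $u_{ij} = u$ for all $j \in \rho(i)$, a nontrivial dependence relation $\sum_j \mu_j v_{ij} = 0$ produces a nonzero rank-one matrix $A = e_i(\sum_j \mu_j e_j)^* \in \spn\{E_{ij} : j \in \rho(i)\}$ with $\phi(A) = u(\sum_j \mu_j v_{ij})^* = 0$, again violating rank-one preservation.

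The columnwise statements (a'), (b'), (c') are proved by exactly the same arguments with rows and columns interchanged (using $E_{ij} + E_{kj}$ in place of $E_{ij} + E_{ik}$, and swapping the roles of the $u$'s and $v$'s). The most delicate point is the case analysis in the non-exclusive part of (b), where one has to carefully exploit transitivity of ``$\parallel$'' on nonzero vectors in order to collapse every $v_{ik}$ onto the common line through $v_{ij_1}$ regardless of whether $u_{ik}$ aligns with $u_{ij_1}$ or not.
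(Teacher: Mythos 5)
Your proposal is correct and follows essentially the same route as the paper's proof: part (a) via Lemma \ref{le:uniqueness of rank-one representation}(c), part (b) via the parallelism case analysis, and the exclusivity statement and part (c) via the observation that any nonzero matrix supported in a single row (or column) of $\ca{A}_\rho$ has rank one and therefore cannot lie in the kernel of $\phi$. The only cosmetic difference is that the paper builds the ``not both'' half of the dichotomy directly into (a) (using the same kernel argument you invoke later), whereas you defer that content to (b); either arrangement works.
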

			We only prove (a), (b) and (c), as the proofs of (a'), (b') and (c') are analogous (or one can just pass to the map $\phi(\cdot)^t$).
			\begin{enumerate}
				\item[(a)] The matrices $\phi(E_{ij})$ and $\phi(E_{ik})$ are rank-one and their sum $\phi(E_{ij}+E_{ik})$ is rank-one as well so Lemma \ref{le:uniqueness of rank-one representation} (c) applies. Suppose now that $u_{ij} \parallel u_{ik}$ and $v_{ij} \parallel v_{ik}$. Let $\alpha,\beta \in \C^\times$ such that $u_{ik} = \alpha u_{ij}$ and $v_{ik} = \beta v_{ij}$. Then $$\phi(\alpha\overline{\beta} E_{ij} -E_{ik}) = \alpha\overline{\beta} u_{ij}v_{ij}^* - u_{ik}v_{ik}^* = \alpha\overline{\beta} u_{ij}v_{ij}^* - \alpha\overline{\beta} u_{ij}v_{ij}^* = 0,$$
				which is a contradiction.
				
				\item[(b)] For a fixed $1 \le i \le n$, we need to show that the same option from $(a)$ arises for all $1 \le j \ne k \le n$ such that $(i,j), (i,k) \in \rho$. Suppose the contrary, for example that $1 \le j,k,l \le n$ are distinct indices such that $(i,j), (i,k),(i,l) \in \rho$ and  $u_{ij} \parallel u_{ik} \not\parallel u_{il}.$ Then by $(a)$ it follows that $v_{ij} \not\parallel v_{ik} \parallel v_{il}$ and hence we have $u_{ij} \not\parallel u_{il}$ and $v_{ij} \not\parallel v_{il}$,  which is a contradiction with $(a)$. This proves the first part of the claim. To prove the second part, assume that $(i,j) \in \rho^\times$, but both dimensions are $1$. Then $u_{ii} \parallel u_{ij}$ and $v_{ii} \parallel v_{ij}$, which is a contradiction with $(a)$.
				
				\item[(c)] We will prove the first claim as the second one is very similar. Fix $1 \le i \le n$ and assume $\dim\spn\{u_{ij} : j \in \rho(i)\} = 1$. For each $j \in \rho(i)$ we can choose a scalar $\lambda_{ij} \in \C^\times$ such that $u_{ij} = \lambda_{ij}u_{ii}$. Then we have
				$$\phi(E_{ij}) = u_{ij}v_{ij}^* = (\lambda_{ij}u_{ii})v_{ij}^* = u_{ii}(\overline{\lambda_{ij}}v_{ij})^*,$$
				so we can replace $v_{ij}$ with $\overline{\lambda_{ij}}v_{ij}$ and assume that $\phi(E_{ij}) = u_{ii}v_{ij}^*$ for all $j \in \rho(i)$. Let $\alpha_{ij} \in \C, j \in \rho(i)$ be scalars such that $$\sum_{j \in \rho(i)}\alpha_{ij}v_{ij} = 0.$$
				Then $$\phi\left(\sum_{j \in \rho(i)}\alpha_{ij}E_{ij}\right) = u_{ii}\sum_{j \in \rho(i)}\alpha_{ij}v_{ij}^* = 0.$$
				This implies $\alpha_{ij}= 0$ for all $j \in \rho(i)$, as otherwise the rank-one matrix $\sum_{j \in \rho(i)}\alpha_{ij}E_{ij}$ is in the kernel of $\phi$.

			\end{enumerate}
			
			\begin{claim}\label{cl:diagonal elements are linearly 
					independent} The sets $\{u_{11}, \ldots, u_{nn}\}$ and $\{v_{11}, \ldots, v_{nn}\}$ are linearly independent. Furthermore, they satisfy the orthogonality relations
				$$\inner{u_{ii}}{v_{jj}} = \delta_{ij}, \qquad 1\le i,j \le n.$$
			\end{claim}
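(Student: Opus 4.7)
The plan is to exploit the unitality of $\phi$ in a single clean stroke. Since $\phi(I)=I$ and $I = \sum_{i=1}^n E_{ii}$, linearity gives
$$ I = \sum_{i=1}^n \phi(E_{ii}) = \sum_{i=1}^n u_{ii}\,v_{ii}^*. $$
Applying this operator identity to an arbitrary $x \in \mathbb{C}^n$ yields $x = \sum_{i=1}^n (v_{ii}^*x)\,u_{ii}$, which shows that $\{u_{11},\ldots,u_{nn}\}$ spans $\mathbb{C}^n$. Being $n$ vectors spanning an $n$-dimensional space, they form a basis and are in particular linearly independent. Taking the Hermitian adjoint of the displayed identity gives $I = \sum_{i=1}^n v_{ii}\,u_{ii}^*$, and the same argument then applies to show that $\{v_{11},\ldots,v_{nn}\}$ is also linearly independent (indeed, a basis).

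For the biorthogonality relations, the idea is to plug a basis vector back into the identity. Applying $I = \sum_{i=1}^n u_{ii}\,v_{ii}^*$ to the specific vector $u_{jj}$ gives
$$ u_{jj} = \sum_{i=1}^n (v_{ii}^* u_{jj})\,u_{ii} = \sum_{i=1}^n \langle u_{jj}, v_{ii}\rangle\,u_{ii}. $$
Since $\{u_{11},\ldots,u_{nn}\}$ is a basis of $\mathbb{C}^n$, the coefficients in such an expansion are unique, so comparing with $u_{jj} = \sum_i \delta_{ij}\,u_{ii}$ forces $\langle u_{jj}, v_{ii}\rangle = \delta_{ij}$ for all $1\le i,j\le n$, which is exactly the claimed orthogonality (after renaming indices).

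There is no real obstacle here: the claim is essentially a two-line consequence of unitality, and it does not require any of the rank-one preserving structure beyond what is used to set up the representation $\phi(E_{ii}) = u_{ii}v_{ii}^*$. The reason the statement deserves to be isolated as a separate claim is that the linear independence and biorthogonality of the $u_{ii}$'s and $v_{ii}$'s will serve as the backbone for the rest of the proof of Theorem~\ref{thm:rank-one preserver}(a), where one extracts the Jordan-embedding structure from the row/column alternatives established in Claim~\ref{cl:ljiljana}.
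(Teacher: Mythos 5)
Your proof is correct and is essentially the same argument as the paper's: both derive everything from the unitality identity $I=\sum_{i=1}^n u_{ii}v_{ii}^*$, the paper by reading it as $UV^*=I$ for the square matrices $U=[u_{11}\cdots u_{nn}]$, $V=[v_{11}\cdots v_{nn}]$ (whence both are invertible and $V^*U=I$ gives the biorthogonality), and you by the equivalent vector-level reformulation via spanning and uniqueness of basis coordinates.
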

			
			We have $$I = \phi(I) = \sum_{i=1}^n u_{ii}v_{ii}^* = \begin{bmatrix}u_{11} & \cdots & u_{nn}\end{bmatrix}\begin{bmatrix}v_{11} & \cdots & v_{nn}\end{bmatrix}^*.$$
			In particular, the matrices $\begin{bmatrix}u_{11} & \cdots & u_{nn}\end{bmatrix}$ and $\begin{bmatrix}v_{11} & \cdots & v_{nn}\end{bmatrix}$ are invertible. We also have
			$$[\delta_{ij}]_{i,j=1}^n = I = \begin{bmatrix}v_{11} & \cdots & v_{nn}\end{bmatrix}^*\begin{bmatrix}u_{11} & \cdots & u_{nn}\end{bmatrix} = [v_{ii}^*u_{jj}]_{i,j=1}^n = [\inner{u_{jj}}{v_{ii}}]_{i,j=1}^n.$$
			
			Denote $$\ca{U}_R := \{1 \le i \le n : \dim \spn\{u_{ij} : j \in \rho(i)\} = 1\},$$
			$$\ca{V}_R := \{1 \le i \le n : \dim \spn\{v_{ij} : j \in \rho(i)\} 
			= 1\}.$$
			By Claim \ref{cl:ljiljana} (b), it is clear that
			\begin{equation}\label{eq:union and intersection rows}
				\ca{U}_R \cup \ca{V}_R = \{1, \ldots, n\}, \qquad \ca{U}_R \cap \ca{V}_R = \{1 \le i \le n : (\rho^\times)(i) = \emptyset\}.
			\end{equation}
			Similarly, for the columns, we define
			$$\ca{U}_C := \{1 \le j \le n : \dim \spn\{u_{ij} : i \in \rho^{-1}(j)\} = 1\},$$
			$$\ca{V}_C := \{1 \le j \le n : \dim \spn\{v_{ij} : i \in \rho^{-1}(j)\} = 1\}$$
			and we can make similar observations as above to conclude
			\begin{equation}\label{eq:union and intersection columns}
				\ca{U}_C \cup \ca{V}_C = \{1, \ldots, n\}, \qquad \ca{U}_C \cap \ca{V}_C = \{1 \le j \le n : \rho^{-1}(j) = \emptyset\}.
			\end{equation}
			
			\begin{claim}\label{cl:interlaced products}
				We have
				$$\rho^\times \subseteq ((\ca{U}_R\setminus \ca{V}_R) \times (\ca{V}_C\setminus \ca{U}_C)) \sqcup ((\ca{V}_R\setminus \ca{U}_R) \times (\ca{U}_C\setminus \ca{V}_C)),$$
				where $\sqcup$ stands for the disjoint union.
			\end{claim}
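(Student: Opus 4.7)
The plan is to derive Claim \ref{cl:interlaced products} by combining the decompositions recorded in \eqref{eq:union and intersection rows} and \eqref{eq:union and intersection columns} with the linear independence of $\{u_{11},\ldots,u_{nn}\}$ and $\{v_{11},\ldots,v_{nn}\}$ established in Claim \ref{cl:diagonal elements are linearly independent}. Fix $(i,j)\in\rho^\times$.

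First I would argue that the index $i$ must lie in the symmetric difference of $\ca{U}_R$ and $\ca{V}_R$. Indeed, $(\rho^\times)(i)\neq\emptyset$ since $j\in(\rho^\times)(i)$, so by \eqref{eq:union and intersection rows} we have $i\notin\ca{U}_R\cap\ca{V}_R$, and combined with $\ca{U}_R\cup\ca{V}_R=\{1,\ldots,n\}$ this gives $i\in(\ca{U}_R\setminus\ca{V}_R)\cup(\ca{V}_R\setminus\ca{U}_R)$. The same argument, now using $(\rho^\times)^{-1}(j)\neq\emptyset$ and \eqref{eq:union and intersection columns}, places $j\in(\ca{U}_C\setminus\ca{V}_C)\cup(\ca{V}_C\setminus\ca{U}_C)$.

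The main step is then the casework showing that the ``row-type'' of $i$ forces the \emph{opposite} ``column-type'' of $j$. Suppose $i\in\ca{U}_R\setminus\ca{V}_R$, so by definition of $\ca{U}_R$ we have $u_{ii}\parallel u_{ij}$. I want to rule out $j\in\ca{U}_C$: if this were the case, then since $i,j\in\rho^{-1}(j)$, the definition of $\ca{U}_C$ would give $u_{ij}\parallel u_{jj}$; transitivity of parallelism would then yield $u_{ii}\parallel u_{jj}$, contradicting the linear independence of $\{u_{11},\ldots,u_{nn}\}$ from Claim \ref{cl:diagonal elements are linearly independent}. Hence $j\notin\ca{U}_C$, and \eqref{eq:union and intersection columns} forces $j\in\ca{V}_C\setminus\ca{U}_C$. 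The remaining case $i\in\ca{V}_R\setminus\ca{U}_R$ is treated verbatim with the roles of $u$ and $v$ swapped, invoking the independence of $\{v_{11},\ldots,v_{nn}\}$ to conclude $j\in\ca{U}_C\setminus\ca{V}_C$.

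I do not expect a genuine obstacle here; the argument hinges on a single observation, namely that the vector $u_{ij}$ (resp.\ $v_{ij}$) is shared between the row constraint at $i$ and the column constraint at $j$, so that Claim \ref{cl:diagonal elements are linearly independent} forces the two ``types'' to be of opposite kinds. The disjointness of the union on the right-hand side of the claim is then automatic from the membership of $i$ in exactly one of $\ca{U}_R\setminus\ca{V}_R$ and $\ca{V}_R\setminus\ca{U}_R$.
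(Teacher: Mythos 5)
Your proposal is correct and follows essentially the same route as the paper: both reduce to the observation that $i$ and $j$ lie in the respective symmetric differences (via \eqref{eq:union and intersection rows} and \eqref{eq:union and intersection columns}), and both derive the contradiction $u_{ii}\parallel u_{ij}\parallel u_{jj}$ (resp.\ the $v$-version) from Claim \ref{cl:diagonal elements are linearly independent} when the row-type and column-type agree. The only cosmetic difference is that you phrase the contradiction as ruling out $j\in\ca{U}_C$ rather than assuming $(i,j)\in(\ca{U}_R\setminus\ca{V}_R)\times(\ca{U}_C\setminus\ca{V}_C)$ outright, which is equivalent.
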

			
			Let $(i,j) \in \rho^\times$. Since $(\rho^\times)(i)$  and $(\rho^\times)^{-1}(j)$ are both nonempty, by \eqref{eq:union and intersection rows} and \eqref{eq:union and intersection columns} we have
			$$i \in (\ca{U}_R\setminus \ca{V}_R) \sqcup  (\ca{V}_R\setminus \ca{U}_R) \quad \text{ and } \quad j \in (\ca{U}_C\setminus \ca{V}_C) \sqcup  (\ca{V}_C\setminus \ca{U}_C).$$
			Suppose by way of contradiction that $(i,j) \in (\ca{U}_R\setminus \ca{V}_R)\times (\ca{U}_C\setminus \ca{V}_C)$. It follows
			$u_{ii} \parallel u_{ij} \parallel u_{jj},$
			which contradicts Claim \ref{cl:diagonal elements are linearly independent}. The case $(i,j) \in (\ca{V}_R\setminus \ca{U}_R)\times (\ca{V}_C\setminus \ca{U}_C)$ is similarly shown to be impossible.

			\begin{claim}\label{cl:transitive elements}
				Suppose that $(i,j)\in \ca{\rho}^\times$. If there exists $(j,k)\in \ca{\rho}^\times$ then
				$$(i,j) \in ((\ca{U}_R\setminus \ca{V}_R) \times (\ca{U}_R\setminus \ca{V}_R)) \sqcup ((\ca{V}_R\setminus \ca{U}_R) \times (\ca{V}_R\setminus \ca{U}_R)).$$
			\end{claim}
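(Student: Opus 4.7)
The plan is to leverage Claim \ref{cl:interlaced products} together with the transitivity of $\rho$, reducing the statement to a single obstructive subcase that I will handle by a tailored rank-one argument. From Claim \ref{cl:interlaced products} applied to $(i,j)$ and $(j,k)$, both $i$ and $j$ are forced into $(\ca{U}_R \setminus \ca{V}_R) \sqcup (\ca{V}_R \setminus \ca{U}_R)$. Thus to prove the claim I only need to rule out the mixed placement in which $i$ and $j$ lie in different row-type sets; say, $i \in \ca{U}_R \setminus \ca{V}_R$ and $j \in \ca{V}_R \setminus \ca{U}_R$ (the opposite mixed placement is fully symmetric).

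The easy part will be when $k \neq i$. In this case transitivity of $\rho$ gives $(i,k) \in \rho^\times$, so Claim \ref{cl:interlaced products} applied to $(i,k)$ forces $k \in \ca{V}_C \setminus \ca{U}_C$, whereas applied to $(j,k)$ it forces $k \in \ca{U}_C \setminus \ca{V}_C$. Since these two column-type sets are disjoint, this already yields a contradiction.

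The main obstacle is the degenerate subcase $k = i$, i.e.\ $(i,j), (j,i) \in \rho^\times$, where transitivity is uninformative. Here Claim \ref{cl:interlaced products} applied to both $(i,j)$ and $(j,i)$ further places $j \in \ca{V}_C \setminus \ca{U}_C$ and $i \in \ca{U}_C \setminus \ca{V}_C$. Unravelling the definitions of $\ca{U}_R, \ca{V}_R, \ca{U}_C, \ca{V}_C$, I would note that $u_{ij}$ and $u_{ji}$ are both parallel to $u_{ii}$, while $v_{ij}$ and $v_{ji}$ are both parallel to $v_{jj}$. Consequently, $\phi(E_{ij})$ and $\phi(E_{ji})$ are both scalar multiples of $u_{ii} v_{jj}^*$, say with scalars whose sum is some $c \in \C$. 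Now I would consider the rank-one matrix $X := E_{ii} + E_{ij} + E_{ji} + E_{jj} \in \ca{A}_\rho$ (the all-ones $2 \times 2$ block in rows and columns $i,j$). Its image under $\phi$ can then be rewritten as $u_{ii} (v_{ii} + \overline{c} v_{jj})^* + u_{jj} v_{jj}^*$. Since $\phi(X)$ has rank one, Lemma \ref{le:uniqueness of rank-one representation} (c) forces either $u_{ii} \parallel u_{jj}$, or $v_{ii} + \overline{c} v_{jj}$ to be a scalar multiple of $v_{jj}$ (including the possibility that it vanishes). The first option directly contradicts Claim \ref{cl:diagonal elements are linearly independent}, and the remaining possibilities yield $v_{ii} \parallel v_{jj}$, again contradicting Claim \ref{cl:diagonal elements are linearly independent}. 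This rules out the mixed placement and completes the proof.
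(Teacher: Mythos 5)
Your proposal is correct and follows essentially the same route as the paper: the case $k\neq i$ is dispatched by transitivity and two applications of Claim \ref{cl:interlaced products}, and the degenerate case $k=i$ by applying Lemma \ref{le:uniqueness of rank-one representation} (c) to $\phi(E_{ii}+E_{ij}+E_{ji}+E_{jj})$ and contradicting Claim \ref{cl:diagonal elements are linearly independent}. The only cosmetic difference is that you collapse both cross terms $\phi(E_{ij})+\phi(E_{ji})$ into a single multiple of $u_{ii}v_{jj}^*$ before splitting into the two branches, whereas the paper absorbs one cross term into each summand; both groupings lead to the same contradiction.
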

			
			As in the proof of Claim \ref{cl:interlaced products}, first note that $i,j \in (\ca{U}_R\setminus \ca{V}_R) \sqcup  (\ca{V}_R\setminus \ca{U}_R)$. We shall assume that $i \in \ca{U}_R\setminus \ca{V}_R$ as the other case is similar. Assume first that $k \ne i$. Then by Claim \ref{cl:interlaced products} from $(i,k) \in \rho^\times$ it follows
			$$i \in \ca{U}_R\setminus \ca{V}_R \implies k \in \ca{V}_C\setminus \ca{U}_C \implies j \in \ca{U}_R\setminus \ca{V}_R.$$
			Now assume that $k = i$. Then $(i,i),(i,j),(j,i),(j,j) \in \rho$ so the matrix \begin{align*}
				\phi(E_{ii} + E_{ij} + E_{ji} + E_{jj}) &= u_{ii}v_{ii}^* + u_{ij}v_{ij}^* + u_{ji}v_{ji}^* + u_{jj}v_{jj}^*
			\end{align*}
			has rank one. By \eqref{eq:union and intersection rows}, $j \in \ca{U}_R \setminus \ca{V}_R$ or $j \in \ca{V}_R\setminus \ca{U}_R$ so by way of contradiction suppose the latter. Then there exist scalars $\alpha,\beta \in \C^\times$ such that $u_{ij} = \alpha u_{ii}$ and $v_{ji} = \beta v_{jj}$. Therefore
			$$u_{ii}v_{ii}^* + u_{ij}v_{ij}^* + u_{ji}v_{ji}^* + u_{jj}v_{jj}^* = u_{ii}(v_{ii}+\overline{\alpha}v_{ij})^* + (\overline{\beta}u_{ji} + u_{jj})v_{jj}^*$$
			so by Lemma \ref{le:uniqueness of rank-one representation} (c) it follows that
			$$u_{ii} \parallel (\overline{\beta}u_{ji} + u_{jj}) \qquad \text{ or }\qquad \left(v_{ii}+\overline{\alpha}v_{ij}\right) \parallel v_{jj}.$$
			By Claim \ref{cl:interlaced products}, since $(j,i) \in \rho^\times$, from $j \in \ca{V}_R\setminus \ca{U}_R$ we obtain $i \in \ca{U}_C\setminus \ca{V}_C$ and therefore
			$$u_{ii} \parallel \left(\overline{\beta}\underbrace{u_{ji}}_{\parallel u_{ii}} + u_{jj}\right)$$
			is a contradiction with Claim \ref{cl:diagonal elements are linearly independent}. Similarly, from $i \in \ca{U}_R\setminus \ca{V}_R$ we obtain $j \in \ca{V}_C\setminus \ca{U}_C$ and therefore
			$$\left(v_{ii}+\overline{\alpha}\underbrace{v_{ij}}_{\parallel v_{jj}}\right) \parallel v_{jj}$$
			is a contradiction with Claim \ref{cl:diagonal elements are linearly independent}.

			\begin{claim}\label{cl:inner product scalars}
				Suppose that two nonzero vectors $v,w \in \C^n$ satisfy $v\parallel w$. Then
				$$v = \frac{\inner{v}{w}}{\norm{w}^2}w = \frac{\norm{v}^2}{\inner{w}{v}}w.$$
			\end{claim}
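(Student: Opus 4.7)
The claim is elementary and the plan is immediate. Since $v, w \in \C^n$ are nonzero and $v \parallel w$, by definition the set $\{v, w\}$ is linearly dependent, so there exists a scalar $\lambda \in \C^\times$ with $v = \lambda w$. The goal is then simply to express $\lambda$ in the two forms given in the statement.

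First, using the convention $\inner{u}{w} = w^*u$ fixed in Section \S\ref{sec:preliminaries}, I would compute
$$\inner{v}{w} = w^*(\lambda w) = \lambda\, w^*w = \lambda \norm{w}^2,$$
which, since $w \ne 0$ (so $\norm{w}^2 \ne 0$), yields $\lambda = \inner{v}{w}/\norm{w}^2$ and hence the first equality $v = \frac{\inner{v}{w}}{\norm{w}^2} w$.

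For the second equality, I would similarly compute
$$\inner{w}{v} = v^*w = (\lambda w)^* w = \overline{\lambda}\, \norm{w}^2, \qquad \norm{v}^2 = v^*v = |\lambda|^2 \norm{w}^2,$$
so that $\norm{v}^2/\inner{w}{v} = |\lambda|^2 \norm{w}^2/(\overline{\lambda}\norm{w}^2) = \lambda$, provided $\inner{w}{v} \ne 0$; but this holds since $\lambda \ne 0$ and $\norm{w}^2 \ne 0$. This gives $v = \frac{\norm{v}^2}{\inner{w}{v}} w$ and completes the proof.

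There is no real obstacle here; the only thing to be careful about is keeping the sesquilinearity convention straight (the inner product in the paper is conjugate-linear in the second argument), which is why $\inner{v}{w} = \lambda \norm{w}^2$ while $\inner{w}{v} = \overline{\lambda}\norm{w}^2$. Both denominators are nonzero, so both expressions are well-defined.
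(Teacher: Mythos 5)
Your proof is correct and is exactly the ``easy computation'' the paper alludes to (the paper gives no further detail): writing $v=\lambda w$ with $\lambda\in\C^\times$ and unwinding the convention $\inner{u}{v}=v^*u$ gives both coefficients equal to $\lambda$, with all denominators visibly nonzero. Nothing to add.
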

			
			This follows from an easy computation.

			\begin{claim}\label{cl:rectangle inner products}
				Suppose that $(p,q),(p,s),(r,q),(r,s)\in\rho$ form a rectangle in $\ca{A}_\rho$. Then
				$$\inner{u_{pq}}{u_{ps}}\inner{u_{rs}}{u_{rq}}\inner{v_{rq}}{v_{pq}}\inner{v_{ps}}{v_{rs}} = \norm{u_{pq}}^2\norm{u_{rs}}^2\norm{v_{pq}}^2\norm{v_{rs}}^2.$$
			\end{claim}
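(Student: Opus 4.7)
The starting point is the observation that $E_{pq}+E_{ps}+E_{rq}+E_{rs}=(e_p+e_r)(e_q+e_s)^*$ has rank one, so by the rank-one-preserving hypothesis on $\phi$, so does
$$M:=\phi(E_{pq}+E_{ps}+E_{rq}+E_{rs})=u_{pq}v_{pq}^*+u_{ps}v_{ps}^*+u_{rq}v_{rq}^*+u_{rs}v_{rs}^*.$$
I will split according to Claim \ref{cl:interlaced products}: when the four rectangle entries lie in $\rho^\times$, either $p,r\in \ca{U}_R\setminus\ca{V}_R$ with $q,s\in\ca{V}_C\setminus\ca{U}_C$, or the opposite scenario holds. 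The two cases are handled by symmetric arguments (one applies the same reasoning to the map $\phi(\cdot)^t$), so I will detail only the first.

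In this case there exist scalars $\alpha,\alpha',\beta,\beta'\in\C^\times$ with $u_{ps}=\alpha u_{pq}$, $u_{rs}=\alpha' u_{rq}$, $v_{rq}=\beta v_{pq}$, and $v_{rs}=\beta' v_{ps}$. Substituting these into $M$ and factoring out $u_{pq}$ and $u_{rq}$ yields the form $u_{pq}A^* + u_{rq}B^*$, where $A=v_{pq}+\overline{\alpha}v_{ps}$ and $B=\beta v_{pq}+\overline{\alpha'}\beta' v_{ps}$ lie in $\spn\{v_{pq},v_{ps}\}$. Since $u_{pq}, u_{rq}$ are linearly independent (by Claim \ref{cl:ljiljana}~(c') applied at the column $q\in\ca{V}_C\setminus\ca{U}_C$), and $v_{pq},v_{ps}$ are linearly independent (by Claim \ref{cl:ljiljana}~(c) at the row $p$), the rank-one constraint on $M$ combined with Lemma \ref{le:uniqueness of rank-one representation}~(c) forces $A\parallel B$; written in the basis $\{v_{pq},v_{ps}\}$, this yields the scalar identity $\overline{\alpha}\beta=\overline{\alpha'}\beta'$.

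The remainder is computational. Using Claim \ref{cl:inner product scalars}, each of the four inner products in the statement is expressed in terms of the $\alpha$'s, $\beta$'s and the norms of the four ``base'' vectors $u_{pq},u_{rq},v_{pq},v_{ps}$; likewise, on the right-hand side, $\norm{u_{rs}}^2=|\alpha'|^2\norm{u_{rq}}^2$ and $\norm{v_{rs}}^2=|\beta'|^2\norm{v_{ps}}^2$. After substitution, both sides share a common nonzero factor of the form $\alpha'\overline{\beta'}\norm{u_{pq}}^2\norm{u_{rq}}^2\norm{v_{pq}}^2\norm{v_{ps}}^2$, and the claimed identity collapses to exactly $\overline{\alpha}\beta=\overline{\alpha'}\beta'$, established above.

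The main technical obstacle will be managing the conjugations carefully when rearranging outer products (each rescaling $u\mapsto\lambda u$ translates to $v\mapsto\overline{\lambda}^{-1}v$ in the dual slot when $uv^*$ is kept fixed), and verifying that the case analysis genuinely reduces to the single scenario treated above — in particular covering degenerate rectangles where some of the indices coincide diagonally, in which case Claim \ref{cl:interlaced products} still classifies the off-diagonal entries, while the diagonal ones are pinned down directly via Claim \ref{cl:diagonal elements are linearly independent} and Claim \ref{cl:ljiljana}.
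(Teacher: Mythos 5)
Your proposal is correct and follows essentially the same route as the paper's proof: both rewrite $\phi(E_{pq}+E_{ps}+E_{rq}+E_{rs})$ as a sum of two rank-one terms using the parallelism relations coming from $p,r\in\ca{U}_R\setminus\ca{V}_R$ and $q,s\in\ca{V}_C\setminus\ca{U}_C$, invoke Lemma \ref{le:uniqueness of rank-one representation}~(c) together with $u_{pq}\not\parallel u_{rq}$ to force the two $v$-parts to be parallel, and read the identity off the resulting $2\times 2$ determinant (your relation $\overline{\alpha}\beta=\overline{\alpha'}\beta'$ is exactly the paper's vanishing determinant after substituting the inner-product expressions of Claim \ref{cl:inner product scalars}). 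The one place where the paper does genuine extra work that you defer is the classification step for degenerate rectangles with diagonal corners, where showing $r\in\ca{U}_R\setminus\ca{V}_R$ requires Claim \ref{cl:transitive elements} (whose proof is where Claim \ref{cl:diagonal elements are linearly independent} actually enters) rather than Claim \ref{cl:interlaced products} alone; this is routine but should be written out.
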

			
			By definition we have $p\ne r$ and $q \ne s$ which implies that all sets $(\rho^\times)(p)$, $(\rho^\times)(r)$, $(\rho^\times)^{-1}(q)$, $(\rho^\times)^{-1}(s)$ are nonempty, so by \eqref{eq:union and intersection rows} we may assume that $p \in \ca{U}_R\setminus \ca{V}_R$ (as the other case $p \in \ca{V}_R\setminus \ca{U}_R$ is similar). We first show that $r \in \ca{U}_R\setminus \ca{V}_R$ as well.
			If $r=q$, then in particular $r\ne s$ so $(p,r),(r,s) \in \rho^\times$ and hence $r \in \ca{U}_R\setminus \ca{V}_R$ follows from Claim \ref{cl:transitive elements}. Assume $r \ne q$.
			\begin{itemize}
				\item If $p =q$, then $r \ne p \ne s$ and therefore $(r,p),(p,s) \in \rho^\times$ so by Claim \ref{cl:transitive elements} it follows that $r \in \ca{U}_R\setminus \ca{V}_R$.
				\item If $p \ne q$, then $$p \in \ca{U}_R\setminus \ca{V}_R \stackrel{\text{Claim }\ref{cl:interlaced products}}\implies q \in \ca{V}_C\setminus \ca{U}_C \stackrel{\text{Claim }\ref{cl:interlaced products}}\implies r \in \ca{U}_R\setminus \ca{V}_R.$$
			\end{itemize}
			Putting it all together, since at least one of $(p,q),(r,q)$ is in $\rho^\times$, and similarly for $(p,s),(r,s)$, we have
			$$p,r \in \ca{U}_R\setminus \ca{V}_R \stackrel{\text{Claim }\ref{cl:interlaced products}}\implies q,s \in \ca{V}_C\setminus \ca{U}_C$$
			and hence, by Claim \ref{cl:inner product scalars},
			$$p \in \ca{U}_R\setminus \ca{V}_R \implies u_{ps} = \frac{\inner{u_{ps}}{u_{pq}}}{\norm{u_{pq}}^2}u_{pq},\qquad r \in \ca{U}_R\setminus \ca{V}_R \implies u_{rq} = \frac{\inner{u_{rq}}{u_{rs}}}{\norm{u_{rs}}^2}u_{rs},$$
			$$q \in \ca{V}_C\setminus \ca{U}_C \implies v_{rq} = \frac{\inner{v_{rq}}{v_{pq}}}{\norm{v_{pq}}^2}v_{pq},\qquad s \in \ca{V}_C\setminus \ca{U}_C \implies v_{rs} = \frac{\norm{v_{rs}}^2}{\inner{v_{ps}}{v_{rs}}}v_{ps}.$$
			The matrix
			\begin{align*}
				&\phi(E_{pq}+E_{ps}+E_{rq}+E_{rs}) = u_{pq}v_{pq}^* + u_{ps}v_{ps}^* + u_{rq}v_{rq}^* + u_{rs}v_{rs}^*\\
				&\qquad= u_{pq}\left(v_{pq} +\frac{\inner{u_{pq}}{u_{ps}}}{\norm{u_{pq}}^2}v_{ps}\right)^* + u_{rs}\left(\frac{\inner{u_{rs}}{u_{rq}}}{\norm{u_{rs}}^2} v_{rq} + v_{rs}\right)^*\\
				&\qquad= u_{pq}\left(v_{pq} + \frac{\inner{u_{pq}}{u_{ps}}}{\norm{u_{pq}}^2}v_{ps}\right)^* + u_{rs}\left(\frac{\inner{u_{rs}}{u_{rq}}}{\norm{u_{rs}}^2} \frac{\inner{v_{rq}}{v_{pq}}}{\norm{v_{pq}}^2}v_{pq} + \frac{\norm{v_{rs}}^2}{\inner{v_{ps}}{v_{rs}}}v_{ps}\right)^*
			\end{align*}
			has rank one. Since $q \in \ca{V}_C\setminus \ca{U}_C$ and $r \in \ca{U}_R\setminus \ca{V}_R$, by Claim \ref{cl:ljiljana} (a) we have $u_{pq} \not\parallel u_{rq} \parallel u_{rs}$. Claim \ref{le:uniqueness of rank-one representation} (c) now yields
			$$\left(v_{pq} + \frac{\inner{u_{pq}}{u_{ps}}}{\norm{u_{pq}}^2}v_{ps}\right) \parallel \left(\frac{\inner{u_{rs}}{u_{rq}}}{\norm{u_{rs}}^2} \frac{\inner{v_{rq}}{v_{pq}}}{\norm{v_{pq}}^2}v_{pq} + \frac{\norm{v_{rs}}^2}{\inner{v_{ps}}{v_{rs}}}v_{ps}\right).$$
			In particular, we have
			$$0 = \begin{vmatrix}
				1 & \frac{\inner{u_{pq}}{u_{ps}}}{\norm{u_{pq}}^2} \\ 
				\frac{\inner{u_{rs}}{u_{rq}}}{\norm{u_{rs}}^2} \frac{\inner{v_{rq}}{v_{pq}}}{\norm{v_{pq}}^2} & \frac{\norm{v_{rs}}^2}{\inner{v_{ps}}{v_{rs}}} = 
			\end{vmatrix} = \frac{\norm{v_{rs}}^2}{\inner{v_{ps}}{v_{rs}}} - \frac{\inner{u_{rs}}{u_{rq}}}{\norm{u_{rs}}^2} \frac{\inner{v_{rq}}{v_{pq}}}{\norm{v_{pq}}^2}\frac{\inner{u_{pq}}{u_{ps}}}{\norm{u_{pq}}^2}$$
			which is exactly what we desired to show.
			
			\smallskip
			
			In the next claim we prove that $\phi$ is a Jordan homomorphism.
			\begin{claim}
				For all $(i,j), (k,l) \in \rho$ we have
				\begin{equation}\label{eq:Jordan product}
					\phi(E_{ij} \circ E_{kl}) = \phi(E_{ij}) \circ \phi(E_{kl}).
				\end{equation}
			\end{claim}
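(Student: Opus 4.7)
The plan is to verify \eqref{eq:Jordan product} by a direct case analysis on the pairs $(i,j), (k,l)\in\rho$, comparing the two expressions
\[
\phi(E_{ij}\circ E_{kl}) = \delta_{jk}\phi(E_{il}) + \delta_{li}\phi(E_{kj}),
\quad
\phi(E_{ij})\circ\phi(E_{kl}) = \inner{u_{kl}}{v_{ij}}u_{ij}v_{kl}^* + \inner{u_{ij}}{v_{kl}}u_{kl}v_{ij}^*.
\]
Throughout I will use the structural information developed in Claims \ref{cl:ljiljana}--\ref{cl:rectangle inner products}. After an admissible rescaling of the pairs $u_{ij}, v_{ij}$, I may write, for each $(i,j)\in\rho^\times$, $\phi(E_{ij}) = c(i,j)u_{ii}v_{jj}^*$ when $i\in\ca{U}_R\setminus\ca{V}_R$ (the \emph{M-branch} of Claim \ref{cl:interlaced products}) and $\phi(E_{ij}) = c(i,j)u_{jj}v_{ii}^*$ when $i\in\ca{V}_R\setminus\ca{U}_R$ (the \emph{A-branch}), for uniquely determined scalars $c(i,j)\in\C^\times$, with the convention $c(i,i)=1$.

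The diagonal cases (both $(i,j), (k,l)$ in $\Delta_n$, or exactly one of them in $\Delta_n$) follow from the biorthogonality $\inner{u_{ii}}{v_{jj}} = \delta_{ij}$ of Claim \ref{cl:diagonal elements are linearly independent}, after expressing the relevant $u$'s and $v$'s as multiples of diagonal vectors via the M/A classification. In the fully off-diagonal, disjoint case ($j\ne k$ and $l\ne i$) the left-hand side vanishes; the two inner products on the right-hand side also vanish because the Kronecker $\delta$'s produced by biorthogonality involve indices that Claim \ref{cl:interlaced products} rules out from coinciding (since $\ca{U}_C\setminus\ca{V}_C$ and $\ca{V}_C\setminus\ca{U}_C$ are disjoint, and analogously for the row sets).

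The substantive step is the shared-index case $j=k$, $i\ne l$: Claim \ref{cl:transitive elements} forces $(i,j)$ and $(j,l)$ (and consequently $(i,l)$) to be of the same branch, so the identity reduces to the multiplicative scalar relation $c(i,l) = c(i,j)c(j,l)$. To extract it I apply rank-one preservation to the matrix
\[
(e_i+e_j)(e_j+e_l)^* = E_{ij}+E_{il}+E_{jj}+E_{jl},
\]
whose image decomposes (in the M-branch) as
\[
u_{ii}\bigl(\overline{c(i,j)}\,v_{jj}+\overline{c(i,l)}\,v_{ll}\bigr)^{\!*} + u_{jj}\bigl(v_{jj}+\overline{c(j,l)}\,v_{ll}\bigr)^{\!*}.
\]
Since $u_{ii}\not\parallel u_{jj}$ by Claim \ref{cl:diagonal elements are linearly independent}, Lemma \ref{le:uniqueness of rank-one representation}(c) forces the two column vectors to be proportional; matching coefficients against the linearly independent $v_{jj}, v_{ll}$ yields exactly $c(i,l) = c(i,j)c(j,l)$. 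The A-branch and the symmetric case $l=i$, $j\ne k$ are analogous.

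The remaining \emph{reciprocal} case $(k,l) = (j,i)$ with $i\ne j$, where $E_{ij}\circ E_{ji} = E_{ii}+E_{jj}$, reduces, after a direct computation in the M/A normalization, to the identity $c(i,j)c(j,i) = 1$; this follows directly from Claim \ref{cl:rectangle inner products} applied to the rectangle $(i,i),(i,j),(j,i),(j,j)$, upon substituting the parallelism relations $u_{ij}\parallel u_{ii}$, $v_{ij}\parallel v_{jj}$, and so on. The main obstacle is the shared-index case: identifying the correct rank-one test matrix and tracking the scalars cleanly through the M/A normalization to recover the transitivity relation for $c$. Once this is in hand, the other cases are routine bookkeeping via biorthogonality.
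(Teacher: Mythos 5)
Your proof is correct and follows essentially the same route as the paper: the same case division on coinciding indices, with the biorthogonality $\inner{u_{ii}}{v_{jj}}=\delta_{ij}$ handling the vanishing and diagonal cases, and rank-one preservation applied to the four-term matrix $E_{ij}+E_{il}+E_{jj}+E_{jl}$ supplying the multiplicative scalar relation in the shared-index case (the paper packages exactly this step as Claim \ref{cl:rectangle inner products}, which you re-derive inline in the two instances you need, namely $(p,q)=(i,l),(r,s)=(j,j)$ and $(p,q)=(i,i),(r,s)=(j,j)$). Your explicit normalization $\phi(E_{ij})=c(i,j)u_{ii}v_{jj}^*$ is a presentational simplification of the paper's repeated appeals to Claim \ref{cl:inner product scalars}, not a different argument.
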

			
			We will assume throughout that $i \in \ca{U}_R$ (as the case $i \in \ca{V}_R$ can be treated similarly). We have several cases to consider. 
			\begin{enumerate}
				\item[($1^\circ$)] Suppose $i = l = k = j$. Then
				$$\phi(E_{ii} \circ E_{ii}) = 2\phi(E_{ii})=  2u_{ii}v_{ii}^* \stackrel{\text{Claim } \ref{cl:diagonal elements are linearly independent}}= 2u_{ii}(v_{ii}^*u_{ii})v_{ii}^* = 2\phi(E_{ii})^2 = \phi(E_{ii}) \circ \phi(E_{ii})$$
				so \eqref{eq:Jordan product} holds.
				
				\item[($2^\circ$)] Suppose $i = l \ne k = j$. Then
				$$\phi(E_{ij} \circ E_{ji}) = \phi(E_{ii}) +\phi(E_{jj}) = u_{ii}v_{ii}^* + u_{jj}v_{jj}^*.$$
				On the other hand,               
				\begin{align*}
					\phi(E_{ij})\phi(E_{ji})+ \phi(E_{ji})\phi(E_{ij}) &= u_{ij}(v_{ij}^*u_{ji})v_{ji}^* + u_{ji}(v_{ji}^*u_{ij})v_{ij}^* \\
					&= \inner{u_{ji}}{v_{ij}} u_{ij}v_{ji}^* + \inner{u_{ij}}{v_{ji}} u_{ji}v_{ij}^*
				\end{align*}
				Since $(i,j), (j,i) \in \rho^\times$, by \eqref{eq:union and intersection rows} we have $i \in \ca{U}_R\setminus \ca{V}_R$ and hence
				\begin{align*}
					&i \in \ca{U}_R\setminus \ca{V}_R \stackrel{\text{Claim }\ref{cl:transitive elements}}\implies j \in \ca{U}_R\setminus \ca{V}_R \implies u_{ji} \parallel u_{jj} \stackrel{\text{Claim } \ref{cl:inner product scalars}}\implies u_{ji} = \frac{\inner{u_{ji}}{u_{jj}}}{\norm{u_{jj}}^2}u_{jj},\\
					&j \in \ca{U}_R\setminus \ca{V}_R \stackrel{\text{Claim }\ref{cl:interlaced products}}\implies i \in  \ca{V}_C\setminus \ca{U}_C \implies v_{ji} \parallel v_{ii} \stackrel{\text{Claim } \ref{cl:inner product scalars}}\implies v_{ji} = \frac{\inner{v_{ji}}{v_{ii}}}{\norm{v_{ii}}^2}v_{ii},\\
					&i \in \ca{U}_R\setminus \ca{V}_R \stackrel{\text{Claim }\ref{cl:interlaced products}}\implies j \in \ca{V}_C\setminus \ca{U}_C \implies v_{ij} \parallel v_{jj} \stackrel{\text{Claim } \ref{cl:inner product scalars}}\implies v_{ij} = \frac{\inner{v_{ij}}{v_{jj}}}{\norm{v_{jj}}^2}v_{jj}.
				\end{align*}
				We also have
				$$i \in \ca{U}_R\setminus \ca{V}_R \implies u_{ij} \parallel u_{ii} \stackrel{\text{Claim } \ref{cl:inner product scalars}}\implies u_{ij} = \frac{\inner{u_{ij}}{u_{ii}}}{\norm{u_{ii}}^2}u_{ii}.$$
				Therefore,
				\begin{align*}
					\inner{u_{ji}}{v_{ij}} u_{ij}v_{ji}^* &= \inner{\frac{\inner{u_{ji}}{u_{jj}}}{\norm{u_{jj}}^2}u_{jj}}{\frac{\inner{v_{ij}}{v_{jj}}}{\norm{v_{jj}}^2}v_{jj}} \left(\frac{\inner{u_{ij}}{u_{ii}}}{\norm{u_{ii}}^2}u_{ii}\right)\left(\frac{\inner{v_{ji}}{v_{ii}}}{\norm{v_{ii}}^2}v_{ii}\right)^*\\
					&\leftstackrel{\text{Claim } \ref{cl:diagonal elements are linearly independent}}= \frac{\inner{u_{ji}}{u_{jj}}\inner{v_{jj}}{v_{ij}}\inner{u_{ij}}{u_{ii}}\inner{v_{ii}}{v_{ji}}}{\norm{u_{jj}}^2\norm{v_{jj}}^2\norm{u_{ii}}^2\norm{v_{ii}}^2}  u_{ii}v_{ii}^*\\
					&= u_{ii}v_{ii}^*,
				\end{align*}
				where the last equality follows by conjugation from Claim \ref{cl:rectangle inner products} for $p=q=i$ and $r=s=j$. By exchanging $i$ and $j$ in the same way we obtain
				$$\inner{u_{ij}}{v_{ji}} u_{ji}v_{ij}^* = u_{jj}v_{jj}^*.$$
				This proves \eqref{eq:Jordan product}.

				\item[($3^\circ$)] Suppose $k = j$ but $i \ne l$. Then $(i,j), (j,l) \in\rho$ so $(i,l) \in \rho^\times$ (in particular, $i \in \ca{U}_R\setminus \ca{V}_R$ by \eqref{eq:union and intersection rows}). We need to prove that
				$$\phi(E_{ij}E_{jl} + E_{jl}E_{ij}) = \phi(E_{il}) = u_{il}v_{il}^*$$
				equals
				\begin{align*}
					\phi(E_{ij})\phi(E_{jl}) + \phi(E_{jl})\phi(E_{ij}) &= u_{ij}(v_{ij}^*u_{jl})v_{jl}^* + u_{jl}(v_{jl}^*u_{ij})v_{ij}^*\\
					&= \inner{u_{jl}}{v_{ij}}u_{ij}v_{jl}^* + \inner{u_{ij}}{v_{jl}}u_{jl}v_{ij}^*.
				\end{align*}
				For the time being, we additionally assume that $j \ne l$. First note that we always have $j \in \ca{U}_R\setminus \ca{V}_R$. Indeed, if $i = j$, then this is obvious, while if $i \ne j$ then $(i,j),(j,l) \in \rho^\times$, so this follows from Claim \ref{cl:transitive elements}. In any case, since $(j,l) \in \rho^\times$, Claim \ref{cl:interlaced products} also implies $l \in \ca{V}_C \setminus \ca{U}_C$. We now have 
				$$i \in \ca{U}_R\setminus \ca{V}_R \implies u_{ij} \parallel u_{ii},\qquad l \in \ca{V}_C\setminus \ca{U}_C \implies v_{jl} \parallel v_{ll}$$
				and hence $\inner{u_{ij}}{v_{jl}}=0$ by Claim \ref{cl:diagonal elements are linearly independent}. Therefore, the second term of the right hand size is zero. Now we focus on the first term.
				
				\smallskip
				
				We have
				$$j \in \ca{U}_R\setminus \ca{V}_R \implies u_{jl} \parallel u_{jj},\qquad l \in \ca{V}_C\setminus \ca{U}_C \implies v_{jl} \parallel v_{il},$$
				$$i\in \ca{U}_R\setminus \ca{V}_R \implies u_{ij} \parallel u_{il},\qquad (i = j \text{ or } (i \ne j \stackrel{\text{Claim } \ref{cl:interlaced products}}\implies j \in \ca{V}_C\setminus \ca{U}_C)) \implies v_{ij} \parallel v_{jj}$$
				
				and therefore (again invoking Claims \ref{cl:inner product scalars} and \ref{cl:diagonal elements are linearly independent}),
				\begin{align*}
					\inner{u_{jl}}{v_{ij}}u_{ij}v_{jl}^* &= \inner{\frac{\inner{u_{jl}}{u_{jj}}}{\norm{u_{jj}}^2} u_{jj}} {\frac{\inner{v_{ij}}{v_{jj}}}{\norm{v_{jj}}^2} v_{jj}}\left(\frac{\inner{u_{ij}}{u_{il}}}{\norm{u_{il}}^2} u_{il}\right)\left(\frac{\inner{v_{jl}}{v_{il}}}{\norm{v_{il}}^2} v_{il}\right)^*\\
					&= \frac{\inner{u_{jl}}{u_{jj}} \inner{v_{jj}}{v_{ij}} \inner{u_{ij}}{u_{il}} \inner{v_{il}}{v_{jl}}}{\norm{u_{jj}}^2\norm{v_{jj}}^2\norm{u_{il}}^2\norm{v_{il}}^2} u_{il}v_{il}^*\\
					&= u_{il}v_{il}^*,
				\end{align*}
				where the last equality is obtained by conjugating the equality from Claim \ref{cl:rectangle inner products} for $(p,q) = (i,l)$ and $(r,s) = (j,j)$. This proves \eqref{eq:Jordan product}.
				
				\smallskip
				
				It remains to consider the case $j = l$ so we are looking at $(i,l),(l,l)$. We need to prove that
				$$\phi(E_{il}E_{ll} + E_{ll}E_{il}) = \phi(E_{il}) = u_{il}v_{il}^*$$
				equals
				\begin{align}\label{eq:Eil and Ell}
					\phi(E_{il})\phi(E_{ll}) + \phi(E_{ll})\phi(E_{il}) &= u_{il}(v_{il}^*u_{ll})v_{ll}^* + u_{ll}(v_{ll}^*u_{il})v_{il}^*\notag \\
					&= \inner{u_{ll}}{v_{il}}u_{il}v_{ll}^* + \inner{u_{il}}{v_{ll}}u_{ll}v_{il}^*.
				\end{align}
				Since $i \in \ca{U}_R\setminus \ca{V}_R$, we have $u_{il}\parallel u_{ii}$ and thus $\inner{u_{il}}{v_{ll}} = 0$ by Claim \ref{cl:diagonal elements are linearly independent}, rendering the second term of \eqref{eq:Eil and Ell} zero. We have 
				$$i \in \ca{U}_R\setminus \ca{V}_R \stackrel{\text{Claim } \ref{cl:interlaced products}}\implies l \in \ca{V}_C\setminus \ca{U}_C \implies v_{ll} \parallel v_{il} $$
				and therefore (by Claims \ref{cl:inner product scalars} and \ref{cl:diagonal elements are linearly independent}), the first term of \eqref{eq:Eil and Ell} is
				
				$$\inner{u_{ll}}{v_{il}}u_{il}v_{ll}^* = \inner{u_{ll}}{\frac{\inner{v_{il}}{v_{ll}}}{\norm{v_{ll}}^2} v_{ll}}u_{il}\left(\frac{\norm{v_{ll}}^2}{\inner{v_{il}}{v_{ll}}} v_{il}\right)^* = u_{il}v_{il}^*$$
				which proves \eqref{eq:Jordan product}.
				
				\item[($4^\circ$)] Suppose $i = l$ but $k \ne j$. By the commutativity of the Jordan product, this case reduces to $(3^\circ)$ by exchanging $E_{ij}$ and $E_{kl}$.
				\item[($5^\circ$)] Suppose $k \ne j$ and $i \ne l$  (so, as before, $i \in \ca{U}_R\setminus \ca{V}_R$). Then the left hand side of \eqref{eq:Jordan product} is zero, while the right hand side equals
				\begin{align*}
					\phi(E_{ij})\phi(E_{kl}) + \phi(E_{kl})\phi(E_{ij}) &= u_{ij}(v_{ij}^* u_{kl})v_{kl}^* + u_{kl}(v_{kl}^* u_{ij})v_{ij}^* \\
					&= \inner{u_{kl}}{v_{ij}} u_{ij}v_{kl}^* + \inner{u_{ij}}{v_{kl}} u_{kl}v_{ij}^*.
				\end{align*}
				Note that
				\begin{equation}\label{eq:vij parallel vjj}
					(i = j \text{ or } (i \ne j \stackrel{\text{Claim } \ref{cl:interlaced products}}\implies j \in \ca{V}_C\setminus \ca{U}_C)) \implies v_{ij} \parallel v_{jj}.
				\end{equation}
				Suppose that $k \in \ca{U}_R$. Then
				$$(k = l \text{ or } (k \ne l \implies k \in \ca{U}_R\setminus\ca{V}_R \stackrel{\text{Claim } \ref{cl:interlaced products}}\implies l \in \ca{V}_C\setminus \ca{U}_C)) \implies v_{kl} \parallel v_{ll}.$$
				We also have
				$$i \in \ca{U}_R \implies u_{ii} \parallel u_{ij}, \qquad k \in \ca{U}_R \implies u_{kk} \parallel u_{kl},$$
				so $\inner{u_{kl}}{v_{ij}} = 0$ and  $\inner{u_{ij}}{v_{kl}} = 0$ by Claim \ref{cl:diagonal elements are linearly independent}.
				
				\smallskip
				
				Suppose now $k \in \ca{V}_R\setminus \ca{U}_R$. Then in particular $i 
				\ne k$. Assume first that $j \ne l$. We have
				$$i \in \ca{U}_R \implies u_{ii} \parallel u_{ij}, \qquad k \in \ca{V}_R \implies v_{kl} \parallel v_{kk},$$
				$$(k = l \text{ or } (k \ne l \stackrel{\text{Claim } \ref{cl:interlaced products}}\implies l \in \ca{U}_C\setminus \ca{V}_C)) \implies u_{ll} \parallel u_{kl} $$
				so by \eqref{eq:vij parallel vjj} we have $\inner{u_{kl}}{v_{ij}} = \inner{u_{ij}}{v_{kl}} = 0$ by Claim \ref{cl:diagonal elements are linearly independent}. Now assume $j = l$. Then $i \ne l = j$ so $(i,j), (k,j) \in \rho^\times$ and therefore
				$$i \in \ca{U}_R \setminus \ca{V}_R \stackrel{\text{Claim }\ref{cl:interlaced products}}\implies j \in \ca{V}_C \setminus \ca{U}_C \stackrel{\text{Claim }\ref{cl:interlaced products}}\implies k \in \ca{U}_R \setminus \ca{V}_R,$$
				which is a contradiction. Therefore $j = l$ is impossible.
				
				\smallskip
				
				In either case, this proves $\phi(E_{ij})\phi(E_{kl}) + \phi(E_{kl})\phi(E_{ij}) = 0$.
				
			\end{enumerate}
			
			\begin{claim}
				$\phi$ is injective.
			\end{claim}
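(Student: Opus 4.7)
The plan is to reduce this claim to Theorem \ref{thm:general form of phi}. Since the preceding claim has already established that $\phi$ is a Jordan homomorphism, the only remaining hypothesis of that theorem to verify is the non-vanishing condition $\phi(E_{ij}) \neq 0$ for every $(i,j) \in \rho$. But this is automatic from the standing setup: each $E_{ij}$ is a rank-one matrix, so by the rank-one preserving assumption $\phi(E_{ij})$ also has rank one and is in particular nonzero (which is precisely why the representation $\phi(E_{ij}) = u_{ij}v_{ij}^*$ with $u_{ij},v_{ij} \neq 0$ was legitimate in the first place). Theorem \ref{thm:general form of phi} then concludes injectivity directly, giving the desired Jordan embedding.

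I expect no real obstacle here: all the hard combinatorial and structural work has been done in the earlier claims (producing the biorthogonality $\langle u_{ii},v_{jj}\rangle=\delta_{ij}$ and the parallelism relations) and, even more essentially, in the proof of Theorem \ref{thm:general form of phi}, where injectivity is obtained from the decomposition $\phi(\cdot)=S(Pg^*(\cdot)+(I-P)g^*(\cdot)^t)S^{-1}$ together with the injectivity of $g^*$. A direct proof is of course also available, extracting individual coefficients $X_{kl}$ from an equation $\sum X_{ij}u_{ij}v_{ij}^* = 0$ by pairing with the dual basis vectors from Claim \ref{cl:diagonal elements are linearly independent}, but invoking Theorem \ref{thm:general form of phi} is strictly shorter and reuses work already in the paper.
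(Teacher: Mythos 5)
Your proposal matches the paper's proof exactly: the paper also notes that $\phi$ is a Jordan homomorphism (by the preceding claim) with $\phi(E_{ij}) \neq 0$ for all $(i,j)\in\rho$ (automatic since $\phi$ preserves rank one), and then invokes Theorem \ref{thm:general form of phi} to conclude injectivity. Nothing to add.
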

			
			Indeed, $\phi$ is a Jordan homomorphism which clearly satisfies $\phi(E_{ij}) \ne 0$ for all $(i,j) \in \rho$. Now Theorem \ref{thm:general form of phi} directly implies the claim.
			
			\smallskip

			This concludes the proof of (a). Now we prove (b). In view of Lemma \ref{le:rank preserver rectangles}, it suffices to prove that $\phi$ is a rank-one preserver if and only if $g^*$ is a rank-one preserver.
			
			\smallskip
			
			Theorem \ref{thm:general form of phi} implies that there exists an invertible matrix $T \in M_n^\times$, a central idempotent $P \in Z(\ca{A}_\rho)$, and a transitive map $g : \rho \to \C^\times$ such that
			$$\phi(\cdot) = T(Pg^*(\cdot) + (I-P)g^*(\cdot)^t)T^{-1}.$$
			For each $X \in \ca{A}_{\rho}$ we have
			\begin{align*}
				r(X) &= r(\phi(X)) = r(T(Pg^*(X) + (I-P)g^*(X)^t)T^{-1}) = r(Pg^*(X) + (I-P)g^*(X)^t) \\
				&\leftstackrel{\text{Lemma } \ref{le:rank of canonical Jordan homomorphism}} = r(g^*(X))
			\end{align*}
			This implies that $\phi(X)$ is a rank-one matrix if and only if $g^*(X)$ is. Since $X \in \ca{A}_\rho$ was an arbitrary rank-one matrix, the claim follows.
		\end{proof}
		
		\begin{remark}\label{rem:nontrivial transitive map}
			For a concrete example showing that the converse of Theorem \ref{thm:rank-one preserver} (a) does not hold in general, consider the SMA $\ca{A}_\rho \subseteq M_4$, where 
			$$\rho := \Delta_4 \cup \{(1,3),(1,4),(2,3),(2,4)\}$$
			and the transitive map $$g : \rho \to \C^\times, \qquad g(i,j) = \begin{cases}
				1, \quad &\text{ if } (i,j) \ne (1,4),\\
				2, \quad &\text{ if } (i,j)=(1,4).
			\end{cases}$$
			Then the induced automorphism
			$$g^* : \ca{A}_{\rho} \to \ca{A}_\rho, \qquad \begin{bmatrix}
				x_{11} & 0 & x_{13} & x_{14} \\
				0 & x_{22} & x_{23} & x_{24} \\
				0 & 0 & x_{33} & 0 \\
				0 & 0 & 0 & x_{44}
			\end{bmatrix} \mapsto \begin{bmatrix}
				x_{11} & 0 & x_{13} & 2x_{14} \\
				0 & x_{22} & x_{23} & x_{24} \\
				0 & 0 & x_{33} & 0 \\
				0 & 0 & 0 & x_{44}
			\end{bmatrix}$$
			is clearly not a rank-one preserver. \smallskip
			
			However, by direct computations, one easily shows that the converse of Theorem \ref{thm:rank-one preserver} (a) does hold if $n \le 3$. Moreover, for general $n \in \N$ and an SMA $\ca{A}_\rho \subseteq M_n$, the same  holds true if we only assume $\abs{C} \le 3$ for all $C \in \ca{Q}$ (following the notation from Section 3). It can be easily shown that this condition, combined  with the  central decomposition of $\mathcal{A}_\rho$ from Remark \ref{re:central decomposition},  implies that all transitive maps $g: \rho \to \C^\times$ are  necessarily trivial. Moreover, using Lemma \ref{le:rank of canonical Jordan homomorphism}, it turns out that in this case all Jordan embeddings $\mathcal{A}_\rho \to M_n$ are in fact rank preservers, which will be a topic of the next section.
		\end{remark}

		\section{Rank preserver}\label{sec:rank preserver}
		
		\subsection{Rank preservers} As previously announced, in this section for an arbitrary SMA  $\mathcal{A}_\rho \subseteq M_n$ we fully describe the form of all rank preservers $\mathcal{A}_\rho \to M_n$ (Theorem \ref{thm:rank preserver}). We start with the following lemma.

		\begin{lemma}\label{le:extending transitive rank preserver}
			Let $\ca{A}_\rho \subseteq M_n$ be an SMA. Assume that $g : \rho \to \C^\times$ is a transitive map such that $$g|_{\rho \cap \{1,\ldots,n-1\}^2} \equiv 1$$ and that the induced automorphism $g^* : \ca{A}_\rho \to \ca{A}_\rho$ is a rank preserver. Then $g$ is trivial. 
		\end{lemma}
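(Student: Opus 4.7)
The plan is to invoke Lemma \ref{le:extending g trivially}, which reduces triviality of $g$ to two symmetric conditions: $g(i,n) = g(j,n)$ whenever $i, j \in (\rho^\times)^{-1}(n)$ satisfy $i \tripprox j$ in $\{1,\ldots,n-1\}$, and the dual statement for $(\rho^\times)(n)$. I focus on the first, as the second follows by an entirely analogous argument with rows and columns swapped (or, equivalently, by passing to the reverse quasi-order $\rho^t$).

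Fixing such $i, j$ and choosing a chain $i = k_0, k_1, \ldots, k_m = j$ in $\{1,\ldots,n-1\}$ with each consecutive pair $\tripprox_0$-related (with respect to $\rho \cap \{1,\ldots,n-1\}^2$), I induct on $m$. For $m = 1$, Lemma \ref{le:rank preserver rectangles} applies — a rank preserver is in particular a rank-one preserver — and the rectangle $(i,j),(i,n),(j,j),(j,n)$ (respectively $(j,i),(j,n),(i,i),(i,n)$) combined with $g \equiv 1$ on $\rho \cap \{1,\ldots,n-1\}^2$ immediately forces $g(i,n) = g(j,n)$.

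For $m \ge 2$, I normalize the chain: two consecutive edges of the same $\rho$-orientation can be collapsed via transitivity of $\rho$ (shortening $m$ and invoking induction), so without loss of generality the chain is an alternating zigzag. If its first edge is backward, then $(k_1,k_0), (k_0,n) \in \rho$ force $(k_1,n) \in \rho$, so $k_1 \in (\rho^\times)^{-1}(n)$ and I split at $k_1$; symmetrically if the last edge is forward. The only case that survives is a zigzag of even length $m = 2r$ of shape $k_0 \to k_1 \leftarrow k_2 \to k_3 \leftarrow \cdots \to k_{2r-1} \leftarrow k_{2r}$.

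In this final configuration I consider the parametric matrix
\[
X \;=\; \sum_{l=1}^{r} \bigl(a_l\, E_{k_{2l-2}\,k_{2l-1}} + b_l\, E_{k_{2l}\,k_{2l-1}}\bigr) + e\, E_{k_0\, n} + f\, E_{k_{2r}\, n} \;\in\; \ca{A}_\rho,
\]
with arbitrary parameters $a_l, b_l, e, f \in \C^\times$. The only nonzero rows are those indexed by the peaks $k_0, k_2, \ldots, k_{2r}$, and solving the valley constraints $\alpha_{m-1} a_m + \alpha_m b_m = 0$ together with the column-$n$ constraint $\alpha_0 e + \alpha_r f = 0$ shows that the $r+1$ peak rows admit a nontrivial linear dependence — so the rank drops by exactly one — precisely when $e \prod_{l=1}^{r} b_l + (-1)^{r} f \prod_{l=1}^{r} a_l = 0$. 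The same computation for $g^*(X)$ changes only the column-$n$ coefficients, replacing $e \mapsto g(k_0,n)\, e$ and $f \mapsto g(k_{2r},n)\, f$; since the two monomials $e\prod_l b_l$ and $(-1)^{r} f \prod_l a_l$ vary independently over $\C^\times$, matching the two rank-drop conditions forces $g(k_0,n) = g(k_{2r},n)$. The main obstacle is this zigzag case: pinning down the dependence space among the peak rows precisely enough to extract the ratio $g(k_0,n)/g(k_{2r},n)$ is the only nontrivial step, since every other sub-case reduces cleanly to one of smaller chain length.
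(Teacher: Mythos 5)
Your proposal is correct and follows essentially the same route as the paper: reduce via Lemma \ref{le:extending g trivially}, settle adjacent pairs with rectangles and Lemma \ref{le:rank preserver rectangles}, normalize the chain to an alternating even-length zigzag (which the paper isolates as Lemma \ref{le:chain of alternating pairs}), and compare the rank-drop condition of an explicit matrix supported on the zigzag with that of its image under $g^*$ (the paper fixes the parameters to $\pm 1$; your parametric version is equivalent). The one point to make explicit is that the indices $k_0,\ldots,k_{2r}$ in the surviving zigzag must be pairwise distinct for the peak-row/valley-column rank computation to be valid --- but any repetition yields a strictly shorter chain, so your induction on $m$ already absorbs this.
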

		
		To motivate the proof of Lemma \ref{le:extending transitive rank preserver}, which is rather technical, we first illustrate it on a concrete example.
		
		\begin{example}
			Consider the SMA $\ca{A}_\rho \subseteq M_{10}$ given by
			$$\ca{A}_\rho = \begin{bmatrix}
				(1,1) & (1,2) & 0 & 0 & 0 & 0 & 0 & 0 & 0 & (1,10)\\
				0 & (2,2) & 0 & 0 & 0 & 0 & 0 & 0 & 0 & 0 \\
				0 & (3,2) & (3,3) & (3,4) & 0 & 0 & 0 & 0 & 0 & 0 \\
				0 & 0 & 0 & (4,4) & 0 & 0 & 0 & 0 & 0 & 0 \\
				0 & 0 & 0 & (5,4) & (5,5) & (5,6) & 0 & 0 & 0 & 0 \\
				0 & 0 & 0 & 0 & 0 & (6,6) & 0 & 0 & 0 & 0 \\
				0 & 0 & 0 & 0 & 0 & (7,6) & (7,7) & (7,8) & 0 & 0 \\
				0 & 0 & 0 & 0 & 0 & 0 & 0 & (8,8) & 0 & 0 \\
				0 & 0 & 0 & 0 & 0 & 0 & 0 & (9,8) & (9,9) & (9,10) \\
				0 & 0 & 0 & 0 & 0 & 0 & 0 & 0 & 0 & (10,10)\\
			\end{bmatrix}.$$
			
			If $\tripprox$ denotes the equivalence relation on $\{1,\ldots,9\}$ with respect to the quasi-order $\rho \cap \{1,\ldots,9\}^2$, then by Lemma \ref{le:extending g trivially} we have to show the implications
			$$\begin{cases}
				(\forall i,j \in (\rho^\times)^{-1}(10))(i \tripprox j  \implies g(i,10) = g(j,10)),\\
				(\forall i,j \in (\rho^\times)(10))(i \tripprox j  \implies g(10,i) = g(10,j)).
			\end{cases}$$
			The second implication is vacuously true, and to prove the first one we only have to consider $1, 9 \in (\rho^\times)^{-1}(10)$. Their equivalence $1 \tripprox 9$ (in $\rho \cap \{1,\ldots,9\}^2$) manifests through the chain of length $8$ given by
			$$(1,2),(3,2),(3,4),(5,4),(5,6),(7,6),(7,8),(9,8).$$
			Consider the following matrix $A \in \ca{A}_\rho$, which has $\pm 1$ at those exact positions:
			$$A = \begin{bmatrix}
				0 & 1 & 0 & 0 & 0 & 0 & 0 & 0 & 0 & 1\\
				0 & 0 & 0 & 0 & 0 & 0 & 0 & 0 & 0 & 0 \\
				0 & 1 & 0 & -1 & 0 & 0 & 0 & 0 & 0 & 0 \\
				0 & 0 & 0 & 0 & 0 & 0 & 0 & 0 & 0 & 0 \\
				0 & 0 & 0 & -1 & 0 & 1 & 0 & 0 & 0 & 0 \\
				0 & 0 & 0 & 0 & 0 & 0 & 0 & 0 & 0 & 0 \\
				0 & 0 & 0 & 0 & 0 & 1 & 0 & -1 & 0 & 0 \\
				0 & 0 & 0 & 0 & 0 & 0 & 0 & 0 & 0 & 0 \\
				0 & 0 & 0 & 0 & 0 & 0 & 0 & -1 & 0 & -1\\
				0 & 0 & 0 & 0 & 0 & 0 & 0 & 0 & 0 & 0
			\end{bmatrix}.$$
			Clearly, the first four nonzero columns of $A$ are linearly independent, while the last one is precisely the sum of all the previous ones. Therefore, $r(A) = 4$ and hence the rank of
			$$g^*(A) = \begin{bmatrix}
				0 & 1 & 0 & 0 & 0 & 0 & 0 & 0 & 0 & g(1,10)\\
				0 & 0 & 0 & 0 & 0 & 0 & 0 & 0 & 0 & 0 \\
				0 & 1 & 0 & -1 & 0 & 0 & 0 & 0 & 0 & 0 \\
				0 & 0 & 0 & 0 & 0 & 0 & 0 & 0 & 0 & 0 \\
				0 & 0 & 0 & -1 & 0 & 1 & 0 & 0 & 0 & 0 \\
				0 & 0 & 0 & 0 & 0 & 0 & 0 & 0 & 0 & 0 \\
				0 & 0 & 0 & 0 & 0 & 1 & 0 & -1 & 0 & 0 \\
				0 & 0 & 0 & 0 & 0 & 0 & 0 & 0 & 0 & 0 \\
				0 & 0 & 0 & 0 & 0 & 0 & 0 & -1 & 0 & -g(9,10)\\
				0 & 0 & 0 & 0 & 0 & 0 & 0 & 0 & 0 & 0
			\end{bmatrix}$$
			is $4$ as well. Now it is easy to arrive at $g(1,10) = g(9,10)$, which is the desired conclusion.\smallskip
		\end{example}
		
		In general, every pair $a \tripprox b$ for which the implications from Lemma \ref{le:extending g trivially} apply requires us to produce a similar chain of consecutive positions which establishes the equivalence. Hence, first we prove this auxiliary lemma:
		
		\begin{lemma}\label{le:chain of alternating pairs}
			Suppose $\rho$ is a quasi-order on $\{1,\ldots,n\}$ and assume that distinct $a,b \in \{1,\ldots,n\}$ satisfy $a \tripprox b$ (where $\tripprox$ corresponds to $\rho$). Then there exist $m \in \N$ and distinct $a = i_0, i_1,\ldots,i_{m-1},i_m = b \in \{1,\ldots,n\}$ such that at least one of the following is true:
			\begin{enumerate}
				\item $(a,i_1),(i_2,i_1), (i_2,i_3),(i_4,i_3),\ldots,(i_{m-1},i_{m-2}),(i_{m-1},b) \in \rho^\times$,
				\item $(a,i_1),(i_2,i_1), (i_2,i_3),(i_4,i_3),\ldots,(i_{m-2},i_{m-1}),(b,i_{m-1}) \in \rho^\times$,
				\item $(i_1,a),(i_1,i_2), (i_3,i_2),(i_3,i_4),\ldots,(i_{m-1},i_{m-2}),(i_{m-1},b) \in \rho^\times$,
				\item $(i_1,a),(i_1,i_2), (i_3,i_2),(i_3,i_4),\ldots,(i_{m-2},i_{m-1}),(b,i_{m-1}) \in \rho^\times$.
			\end{enumerate}
		\end{lemma}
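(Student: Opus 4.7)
The plan is to produce the desired chain by taking a \emph{signed} chain of minimal length witnessing $a \tripprox b$ and then using minimality to force both distinctness of the vertices and strict alternation of the edge directions; this is exactly the content of the four cases.

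First, since $a \ne b$ and $a \tripprox b$, by definition of the transitive closure of $\tripprox_0 = \rho \cup \rho^t$ there exist an integer $m \ge 1$ and a sequence $a = c_0, c_1, \ldots, c_m = b$ with $(c_{j-1}, c_j) \in \rho$ or $(c_j, c_{j-1}) \in \rho$ for each $j$. After discarding any indices with $c_{j-1} = c_j$, every remaining edge is off-diagonal, so I may assign to it a sign $\varepsilon_j \in \{+, -\}$ recording which of $(c_{j-1}, c_j), (c_j, c_{j-1})$ lies in $\rho^\times$ (choosing arbitrarily if both do). Among all such \emph{signed chains} from $a$ to $b$, I pick one of minimal length $m$.

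Two minimality reductions then finish the argument. First, the vertices $c_0, \ldots, c_m$ are pairwise distinct: if $c_i = c_j$ with $i < j$, deleting $c_{i+1}, \ldots, c_j$ together with the corresponding signs produces a strictly shorter signed chain from $a$ to $b$, a contradiction. Second, consecutive signs alternate: if $\varepsilon_j = \varepsilon_{j+1} = +$, then $(c_{j-1}, c_j), (c_j, c_{j+1}) \in \rho^\times$, so by transitivity of $\rho$ we have $(c_{j-1}, c_{j+1}) \in \rho$, and by distinctness $(c_{j-1}, c_{j+1}) \in \rho^\times$, allowing me to collapse the two edges into one and contradict minimality. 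The case $\varepsilon_j = \varepsilon_{j+1} = -$ is symmetric, applying transitivity of $\rho$ to $(c_{j+1}, c_j), (c_j, c_{j-1}) \in \rho^\times$.

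Setting $i_j := c_j$, the constructed chain falls into exactly one of the four cases, determined by the pair $(\varepsilon_1, \varepsilon_m) \in \{+,-\}^2$: $(+,+)$ yields (1), $(+,-)$ yields (2), $(-,+)$ yields (3), and $(-,-)$ yields (4); the parity of $m$ (odd for (1) and (4), even for (2) and (3)) follows automatically from the alternation. The substantive content lies in the two minimality reductions above; the remaining step is merely verifying that the alternating pattern prescribed by each of the four cases is reproduced by the corresponding choice of $(\varepsilon_1, \varepsilon_m)$, which I do not expect to present any real obstacle.
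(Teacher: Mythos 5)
Your proposal is correct and follows essentially the same route as the paper: take a $\tripprox_0$-chain from $a$ to $b$ of minimal length, get distinctness of the vertices from minimality, and force strict alternation of edge directions because two composable consecutive edges could be collapsed by transitivity of $\rho$, again contradicting minimality. The only cosmetic difference is bookkeeping: you track a sign per edge and read off the case from $(\varepsilon_1,\varepsilon_m)$, whereas the paper fixes the alternating pattern from the direction of the first edge, verifies it by induction along the chain, and obtains the remaining two cases by passing to the reverse quasi-order $\rho^t$.
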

		\begin{proof}
			By definition of $\tripprox$, there exists $m \in \N$ and $a = i_0, i_1,\ldots,i_{m-1},i_m = b \in \{1,\ldots,n\}$ such that
			\begin{equation*}
				a = i_0 \tripprox_0 i_1 \tripprox_0 i_2 \tripprox_0 \cdots \tripprox i_{m-1} \tripprox_0 i_m = b.
			\end{equation*}
			We further assume that $m$ is chosen to be minimal (in particular, the elements are distinct). Certainly $m \ge 1$ since $a \ne b$ so $i_1$ exists. \smallskip
			
			By definition of $a \tripprox_0 i_1$, we have $(a,i_1) \in \rho$ or $(i_1,a)\in \rho$. First we assume the former. Define a sequence $(s_j)_{j=1}^m$ of length $m$ in $\{1,\ldots,n\}^2$ by
			$$s_j = \begin{cases}
				(i_{j-1},i_j), &\quad\text{ if $j$ is odd},\\
				(i_{j},i_{j-1}), &\quad\text{ if $j$ is even}.
			\end{cases}$$
			Note that $(s_j)_{j=1}^m$ is precisely a sequence of the form $(1)$ or $(2)$ if $m$ is odd or even, respectively. It therefore suffices to prove that $s_j \in \rho$ for all $1 \le j \le m$. We prove this fact by induction on $j$. For $j=1$ we have $s_1 = (a,i_1) \in \rho$ by assumption. Assume that $m \ge 2$ and that $s_j \in \rho$ for some $1 \le j \le m-1$. We aim to prove $s_{j+1} \in \rho$. If $j$ is even, then by the inductive hypothesis, we have $s_j = (i_{j},i_{j-1}) \in \rho$. Since $i_{j} \tripprox_0 i_{j+1}$, by definition we have $(i_j,i_{j+1}) \in \rho$ or $(i_{j+1},i_{j}) \in \rho$. If the latter is true, by transitivity we would conclude $(i_{j+1},i_{j-1}) \in \rho$, and hence $$a = i_0 \tripprox_0 \cdots \tripprox_0 i_{j-1} \tripprox_0 i_{j+1} \tripprox_0 \cdots \tripprox_0 i_m = b$$
			which contradicts the minimality of $m$. Therefore, $s_{j+1} = (i_j,i_{j+1}) \in \rho$, which is what we wanted to show. On the other hand, if $j$ is odd, then by the inductive hypothesis we have $s_j = (i_{j-1},i_j) \in \rho$. Since $i_{j} \tripprox_0 i_{j+1}$, by definition we have $(i_j,i_{j+1}) \in \rho$ or $(i_{j+1},i_{j}) \in \rho$, but the former would, by transitivity, again contradict the minimality of $m$. Therefore, we conclude $s_{j+1} = (i_{j+1},i_{j}) \in \rho$, thus completing the inductive step.
			
			The other case $(i_1,a)\in \rho$ follows from the above argument applied on the reverse quasi-order $\rho^t$, yielding a sequence of the form $(3)$ or $(4)$.
		\end{proof}
		
		\begin{proof}[Proof of Lemma \ref{le:extending transitive rank preserver}]
			If $\tripprox$ denotes the equivalence relation on $\{1,\ldots,n-1\}$ with respect to the quasi-order $\rho \cap \{1,\ldots,n-1\}^2$, then by Lemma \ref{le:extending g trivially} we have to show the implications
			$$\begin{cases}
				(\forall i,j \in  (\rho^\times)^{-1}(n))(i \tripprox j  \implies g(i,n) = g(j,n)),\\
				(\forall i,j \in  (\rho^\times)(n))(i \tripprox j  \implies g(n,i) = g(n,j)).
			\end{cases}$$
			We will show the first one as the second one will then follow by considering the reverse quasi-order ${\rho^t}$ and the corresponding algebra $\ca{A}_{\rho^t}$.\smallskip
			
			Suppose that distinct $a,b \in (\rho^\times)^{-1}(n)$ satisfy $a \tripprox b$. Then by Lemma \ref{le:chain of alternating pairs} applied on the quasi-order $\rho \cap \{1,\ldots,9\}^2$ there exists $m \in \N$ and distinct $a = i_0, i_1,\ldots,i_{m-1},i_m = b \in \{1,\ldots,n-1\}$ such that at least one of the conditions $(1)-(4)$ is true.\smallskip
			
			Assume first that $m=1$. Then $(a,b) \in \rho$ or $(b, a) \in \rho$. In the first case we notice that $(a,b), (a,n), (b,b), (b,n)$ form a rectangle in $\ca{A}_\rho$ so by Lemma \ref{le:rank preserver rectangles} it follows
			$$0 = \begin{vmatrix}
				g(a,b) & g(a,n) \\ g(b,b) & g(b,n)
			\end{vmatrix} = \begin{vmatrix}
				1 & g(a,n) \\ 1 & g(b,n)
			\end{vmatrix} = g(b,n) - g(a,n).$$
			In the second case we proceed similarly with the rectangle $(b,a), (b,n), (a,a), (a,n)$. Therefore in the remainder of the proof we can assume that $m \ge 2$.
			
			\smallskip
			
			Assume that we are in the case $(2)$. In particular, $m$ is even. Consider the matrix $A \in \ca{A}_{\rho}$ given by
			$$A := (E_{i_0i_1} + E_{i_2i_1}) - (E_{i_2i_3} + E_{i_4i_3}) + \cdots + (-1)^{\frac{m}2-1} (E_{i_{m-2}i_{m-1}}+E_{i_m i_{m-1}}) + E_{an}+(-1)^{\frac{m}2-1}E_{bn}.$$
			Note that $A$ has rank exactly equal to $\frac{m}2$. Namely, the $\frac{m}2$ columns $i_1,i_3,\ldots,i_{m-1}$ are linearly independent, while its $n$-th column is their sum:
			$$\sum_{\substack{0 \le j \le m-2\\j \text{ even}}} \underbrace{(-1)^{\frac{j}2} (e_{i_j} + e_{i_{j+2}})}_{\text{$i_{j+1}$-th column of $A$}} = \underbrace{e_{a} + (-1)^{\frac{m}2-1} e_{b}}_{\text{$n$-th column of $A$}}.$$
			The map $g^*$ maps the matrix $A$ to the matrix
			\begin{align*}
				g^*(A) &= (E_{i_0i_1} + E_{i_2i_1}) - (E_{i_2i_3} + E_{i_4i_3}) + \cdots + (-1)^{\frac{m}2-1} (E_{i_{m-2}i_{m-1}}+E_{i_m i_{m-1}}) \\
				&\quad+ g(a,n)E_{an}+(-1)^{\frac{m}2-1}g(b,n)E_{bn}
			\end{align*}
			which, by assumption, also has rank $\frac{m}2$. Hence there exist scalars $\alpha_0,\alpha_2,\ldots,\alpha_{m-2} \in \C$ such that
			$$\sum_{\substack{0 \le j \le m-2\\j \text{ even}}} \alpha_j \underbrace{(-1)^{\frac{j}2} (e_{i_j} + e_{i_{j+2}})}_{\text{$i_{j+1}$-th column of $g^*(A)$}} =  \underbrace{g(a,n)e_{a} + (-1)^{\frac{m}2-1}g(b,n) e_{b}}_{\text{$n$-th column of $g^*(A)$}}.$$
			Comparing coefficients of $e_{i_j}$ yields
			$$\begin{cases}
				i_0 = a:&\qquad \alpha_0 = g(a,n),\\
				i_j \text{ for even } 2 \le j \le m-2:&\qquad (-1)^{\frac{j-2}2}\alpha_{j-2} + (-1)^{\frac{j}2} \alpha_{j} = 0 \implies \alpha_j = \alpha_{j-2},\\
				i_m = b:&\qquad (-1)^{\frac{m-2}2}\alpha_{m-2} = (-1)^{\frac{m}2-1} g(b,n) \implies \alpha_{m-2} = g(b,n).\\
			\end{cases}$$
			Inductively it follows
			$$g(a,n) = \alpha_0 = \alpha_2 = \cdots = \alpha_{m-2} = g(b,n),$$
			which is precisely what we desired.
			
			\smallskip
			
			Assume that we are in the case $(1)$. Then by transitivity $(i_{m-1},b), (b, n) \in \rho^\times$ implies $(i_{m-1},n) \in \rho^\times$. Then $a$ and $i_{m-1}$ are connected by a sequence of the form $(2)$ and hence $g(a,n) = g(i_{m-1},n)$. Furthermore, we have $i_{m-1} \tripprox_0 b$ so by the $m=1$ case it follows $g(i_{m-1},n) = g(b,n)$.\smallskip
			
			Cases $(3)$ and $(4)$ are treated similarly.
			
			
		\end{proof}
		
		\begin{lemma}\label{le:transitive rank preserver is trivial}
			Let $\ca{A}_\rho \subseteq M_n$ be an SMA and let $g : \rho \to \C^\times$ be a transitive map. Suppose that the induced automorphism $g^* : \ca{A}_\rho \to \ca{A}_\rho$ is a rank preserver. Then $g$ is trivial. 
		\end{lemma}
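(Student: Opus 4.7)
The plan is to prove this by induction on $n$, using Lemma \ref{le:extending transitive rank preserver} as the inductive step. The base case $n=1$ is immediate: the only transitive map on $\{(1,1)\}$ is constantly $1$, which is trivial via $s(1) := 1$.

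For the inductive step, assume the result for all SMAs on $\{1,\ldots,n-1\}$ and let $g : \rho \to \C^\times$ be a transitive map such that $g^*$ is a rank preserver on $\ca{A}_\rho \subseteq M_n$. Set $\rho' := \rho \cap \{1,\ldots,n-1\}^2$, so $\ca{A}_{\rho'} \subseteq M_{n-1}$ is an SMA. I first verify that $(g|_{\rho'})^* : \ca{A}_{\rho'} \to \ca{A}_{\rho'}$ is a rank preserver: any $X \in \ca{A}_{\rho'}$ lifts to $\tilde{X} \in \ca{A}_{\rho}$ by padding with a zero row and column, and $g^*(\tilde X)$ is precisely the analogous lift of $(g|_{\rho'})^*(X)$; since $r(\tilde X)=r(X)$ and $r(g^*(\tilde X))= r(\tilde X)$ by hypothesis, we are done. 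By the inductive hypothesis, $g|_{\rho'}$ is trivial, so there exists $s' : \{1,\ldots,n-1\} \to \C^\times$ such that $g|_{\rho'}(i,j) = s'(i)/s'(j)$ on $\rho'$.

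Extend $s'$ to $s : \{1,\ldots,n\} \to \C^\times$ by setting $s(n) := 1$, and define
$$g'(i,j) := \frac{s(j)}{s(i)} g(i,j), \qquad (i,j) \in \rho.$$
A direct check shows that $g'$ is transitive, and by construction $g'|_{\rho'} \equiv 1$. Moreover, letting $D := \diag(s(1),\ldots,s(n)) \in \ca{D}_n^\times$, one computes $(g')^*(E_{ij}) = D^{-1} g^*(E_{ij}) D$ for all $(i,j) \in \rho$, so $(g')^* = D^{-1} g^*(\cdot) D$ on $\ca{A}_\rho$. Conjugation by an invertible matrix preserves rank, so $(g')^*$ is still a rank preserver.

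The hypothesis of Lemma \ref{le:extending transitive rank preserver} is now satisfied by $g'$, and we conclude that $g'$ is trivial: there exists $s'' : \{1,\ldots,n\} \to \C^\times$ with $g'(i,j) = s''(i)/s''(j)$ on $\rho$. Consequently,
$$g(i,j) = \frac{s(i)}{s(j)} g'(i,j) = \frac{s(i)s''(i)}{s(j)s''(j)}, \qquad (i,j) \in \rho,$$
so $g$ separates through the map $i \mapsto s(i)s''(i)$ and is therefore trivial. The main substantive content here is already packaged in Lemma \ref{le:extending transitive rank preserver}; the only care needed in this argument is the verification that restricting and modifying $g$ by diagonal conjugation preserves the rank-preserver property, both of which are transparent.
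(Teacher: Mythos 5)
Your proposal is correct and follows essentially the same route as the paper: induction on $n$, applying the inductive hypothesis to the restriction $g|_{\rho\cap\{1,\ldots,n-1\}^2}$, normalizing by a diagonal conjugation so that the modified transitive map is identically $1$ on $\rho\cap\{1,\ldots,n-1\}^2$, and then invoking Lemma \ref{le:extending transitive rank preserver}. The only difference is that you spell out the (routine) verification that restriction and diagonal conjugation preserve the rank-preserver property, which the paper leaves implicit.
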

		\begin{proof}
			We prove the claim by induction on $n$. For $n=1$ the claim is clear. Suppose that $n \ge 2$ and that the claim holds for all SMAs contained in $M_{n-1}$. As the automorphism 
			$$(g|_{\rho \cap \{1,\ldots,n-1\}^2})^*: \ca{A}_{\rho \cap \{1,\ldots,n-1\}^2} \to \ca{A}_{\rho \cap \{1,\ldots,n-1\}^2}$$
			induced by the transitive map $g|_{\rho \cap \{1,\ldots,n-1\}^2}$ coincides with the restriction of $g^*$ on $\ca{A}_{\rho \cap \{1,\ldots,n-1\}^2}$, which is a rank preserver, by the induction hypothesis $g|_{\rho \cap \{1,\ldots,n-1\}^2}$
			is trivial. Hence, there exists a map $s : \{1,\ldots,n-1\} \to \C^\times$ such that $$g(i,j) = \frac{s(i)}{s(j)}, \qquad \text{ for all } (i,j) \in \rho \cap \{1,\ldots,n-1\}^2.$$
			Additionally set $s(n) := 1$ and define a new transitive map $$h : \rho \to \C^\times, \qquad h(i,j) := \frac{s(j)}{s(i)}g(i,j).$$
			Note that $h|_{\rho \cap \{1,\ldots,n-1\}^2} \equiv 1$. Furthermore, if we denote $D := \diag(s(1),\ldots,s(n)) \in \ca{D}_n^\times$, then one easily verifies that
			$$h^* = D^{-1}g^*(\cdot)D$$
			so $h^*$ is also a rank-one preserver. Lemma \ref{le:extending transitive rank preserver} implies that $h$ is trivial, which implies that $g$ is trivial as well.
		\end{proof}

		\begin{theorem}\label{thm:rank preserver}
			Let $\ca{A}_\rho \subseteq M_n$ be an SMA. A map $\phi : \ca{A}_\rho \to M_n$ is a linear unital rank preserver if and only if there exists an invertible matrix $T \in M_n^\times$ and a central idempotent $P \in Z(\ca{A}_\rho)$ such that
			\begin{equation}\label{eq:rank preserver}
				\phi(\cdot) = T\left(P(\cdot) + (I-P)(\cdot)^t\right)T^{-1}.
			\end{equation}
		\end{theorem}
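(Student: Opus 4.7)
The proof splits naturally into two directions.

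For the easy direction ($\Leftarrow$), assume $\phi$ is of the form $\phi(\cdot) = T(P(\cdot) + (I-P)(\cdot)^t)T^{-1}$ for some $T \in M_n^\times$ and central idempotent $P \in Z(\ca{A}_\rho)$. Linearity is immediate, and $\phi(I) = T(P + (I-P))T^{-1} = I$, so $\phi$ is unital. For rank preservation, for any $X \in \ca{A}_\rho$, conjugation by $T$ does not alter rank, and Lemma \ref{le:rank of canonical Jordan homomorphism} gives $r(PX + (I-P)X^t) = r(X)$.

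For the harder direction ($\Rightarrow$), assume $\phi$ is a linear unital rank preserver. In particular $\phi$ preserves rank-one matrices, so Theorem \ref{thm:rank-one preserver} (a) yields that $\phi$ is a Jordan embedding. Applying Theorem \ref{thm:general form of phi}, we obtain $S \in M_n^\times$, a central idempotent $P \in Z(\ca{A}_\rho)$, and a transitive map $g : \rho \to \C^\times$ such that
$$\phi(\cdot) = S(Pg^*(\cdot) + (I-P)g^*(\cdot)^t)S^{-1}.$$
Using Lemma \ref{le:rank of canonical Jordan homomorphism} combined with the invariance of rank under conjugation and transposition, for every $X \in \ca{A}_\rho$ we get $r(g^*(X)) = r(Pg^*(X) + (I-P)g^*(X)^t) = r(\phi(X)) = r(X)$, so $g^*$ is a rank preserver on $\ca{A}_\rho$. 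Lemma \ref{le:transitive rank preserver is trivial} then forces $g$ to be trivial, so by Lemma \ref{le:g trivial iff induced map is inner automorphism} we can write $g^*(\cdot) = D(\cdot)D^{-1}$ for a diagonal invertible matrix $D \in \ca{D}_n^\times$.

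The remaining task is to absorb $D$ into $S$ in a way that respects both summands. The plan is to introduce the diagonal matrix $R := PD + (I-P)D^{-1} \in \ca{D}_n^\times$, whose inverse is $R^{-1} = PD^{-1} + (I-P)D$. Since $P$ is a diagonal central idempotent, it commutes with $D$, $X$ and $X^t$ (for the latter, note $PX^t = (XP)^t = (PX)^t = X^tP$). A direct computation exploiting $P(I-P)=0$ and these commutations yields
$$R(PX + (I-P)X^t)R^{-1} = PDXD^{-1} + (I-P)D^{-1}X^tD = Pg^*(X) + (I-P)g^*(X)^t,$$
using that $(DXD^{-1})^t = D^{-1}X^tD$ since $D$ is diagonal. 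Setting $T := SR$, we conclude $\phi(\cdot) = T(P(\cdot) + (I-P)(\cdot)^t)T^{-1}$, as required.

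The main conceptual obstacle is already overcome by Lemma \ref{le:transitive rank preserver is trivial}, which is the heart of the section; the rest is essentially a bookkeeping exercise. The only subtle point in the final assembly is the slightly asymmetric role of $D$ versus $D^{-1}$ on the two summands, which is precisely why the conjugating matrix $R$ has to mix $D$ and $D^{-1}$ through the idempotent $P$.
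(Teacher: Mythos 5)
Your proof is correct and follows essentially the same route as the paper's: Theorem \ref{thm:rank-one preserver} (a) to get a Jordan embedding, Theorem \ref{thm:general form of phi} for the canonical form, Lemma \ref{le:rank of canonical Jordan homomorphism} to reduce to $g^*$ being a rank preserver, Lemma \ref{le:transitive rank preserver is trivial} and Lemma \ref{le:g trivial iff induced map is inner automorphism} to write $g^*=D(\cdot)D^{-1}$, and then the converse via Lemma \ref{le:rank of canonical Jordan homomorphism}. Even your absorbing matrix $R=PD+(I-P)D^{-1}$ is exactly the paper's $\Gamma$, so there is nothing further to compare.
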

		\begin{proof}
			\fbox{$\implies$} Suppose that $\phi : \ca{A}_\rho \to M_n$ is a linear unital rank preserver. By Theorem \ref{thm:rank-one preserver} (a) it follows that $\phi$ is a Jordan embedding. Theorem \ref{thm:general form of phi} then implies that there exists an invertible matrix $T \in M_n^\times$, a central idempotent $P \in Z(\ca{A}_\rho)$, and a transitive map $g : \rho \to \C^\times$ such that
			$$\phi(\cdot) = S(Pg^*(\cdot) + (I-P)g^*(\cdot)^t)S^{-1}.$$
			By a similar argument as in the proof of Theorem \ref{thm:rank-one preserver} (b) we obtain that $r(X)= r(\phi(X)) = r(g^*(X))$ for each $X \in \ca{A}_{\rho}$.
			It follows that $g^*$ is a rank preserver, so by Lemma \ref{le:transitive rank preserver is trivial} we conclude that the transitive map $g$ is trivial. By Lemma \ref{le:g trivial iff induced map is inner automorphism}, the induced automorphism $g^*$ is of the form $g^* = D(\cdot)D^{-1}$ for some $D \in \ca{D}_n^\times$. Consider $\Gamma \in \ca{D}_n^\times$ given by
			$$\Gamma_{jj}:= \begin{cases}
				D_{jj}, &\text{ if $(j,j) \in \supp P$},\\
				\frac1{D_{jj}}, &\text{ if $(j,j) \notin \supp P$}.
			\end{cases}$$
			For all $X \in \ca{A}_\rho$ we have
			\begin{align*}
				\phi(X) &= S\left(Pg^*(X) + (I-P)g^*(X)^t\right)S^{-1} \\
				&= S\left(PDXD^{-1} + (I-P)D^{-1}X^tD\right)S^{-1} \\
				&=  S\left(D(PX)D^{-1} + D^{-1}((I-P)X^t)D\right)S^{-1}\\
				&=  S\left(\Gamma (PX)\Gamma^{-1} +\Gamma((I-P)X^t)\Gamma^{-1}\right)S^{-1}\\
				&= T (PX + (I-P)X^t) T^{-1}
			\end{align*}
			where $T := S\Gamma \in \ca{A}_\rho^\times$.
			
			\smallskip
			
			\noindent \fbox{$\impliedby$} Suppose that $\phi : \ca{A}_\rho \to M_n$ is of the form \eqref{eq:rank preserver} for some invertible matrix $T \in M_n^\times$ and a central idempotent $P \in Z(\ca{A}_\rho)$. Then, by  Lemma \ref{le:rank of canonical Jordan homomorphism}, for all $X \in \ca{A}_\rho$ we have
			\begin{align*}
				r(\phi(X)) &= r(PX + (I-P)X^t) = r(X),
			\end{align*}
			so that $\phi$ is a rank preserver.
			
		\end{proof}
		
		\begin{remark}\phantom{x}
			\begin{enumerate}[(a)]
				\item In Lemmas \ref{le:extending transitive rank preserver} and \ref{le:transitive rank preserver is trivial} and in Theorem \ref{thm:rank preserver} it actually suffices to assume that the respective map preserves the rank of matrices up to $\max\left\{1,\left\lfloor \frac{n}2 \right\rfloor-1 \right\}$. Indeed, let $1 \le a,b \le n-1$ and $m \in \N$ be as in the beginning of the proof of Lemma \ref{le:extending transitive rank preserver}. To prove the result, we use the fact that $g^*$ is a rank-one preserver, while if $m \ge 2$ then we additionally use that $g^*$ preserves rank $\frac{m}2$. Since $i_0,i_1,\ldots, i_m \in \{1,\ldots, n-1\}$ are distinct, it follows that $m+1 \le n-1$ so we arrive at $\frac{m}2 \le \frac{n}2 - 1$.
				\item As a consequence of (a), every unital rank-one preserver $\phi : \ca{A}_\rho \to M_n$ where $n \le 5$, is necessarily a rank preserver. Moreover, for general $n \in \N$ and $\ca{A}_\rho \subseteq M_n$, using the central decomposition of $\mathcal{A}_\rho$ (Remark \ref{re:central decomposition}), it is not difficult to show that the same result holds whenever $\abs{C} \le 5$ for all $C \in \ca{Q}$. 
			\end{enumerate}
		\end{remark}
		
		\begin{corollary}\label{cor:rank preserver without unitality}
			Let $\ca{A}_\rho \subseteq M_n$ be an SMA. A map $\phi : \ca{A}_\rho \to M_n$ is a linear rank preserver if and only if there exist invertible matrices $S,T \in M_n^\times$ and a central idempotent $P \in Z(\ca{A}_\rho)$ such that
			$$\phi(\cdot) = S\left(P(\cdot) + (I-P)(\cdot)^t\right)T.$$      
		\end{corollary}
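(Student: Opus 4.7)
The plan is to reduce the non-unital case to Theorem \ref{thm:rank preserver} by left-multiplying $\phi$ by the inverse of $\phi(I)$, which exists because rank preservation forces $\phi(I)$ to have full rank.

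Concretely, for the forward direction, suppose $\phi : \ca{A}_\rho \to M_n$ is a linear rank preserver. Since $r(\phi(I))=r(I)=n$, the matrix $\phi(I)$ is invertible. Define
$$\psi : \ca{A}_\rho \to M_n, \qquad \psi(\cdot) := \phi(I)^{-1} \phi(\cdot).$$
Then $\psi$ is linear, $\psi(I) = I$, and $\psi$ remains a rank preserver since left multiplication by an invertible matrix preserves rank. Hence by Theorem \ref{thm:rank preserver}, there exist an invertible $U \in M_n^\times$ and a central idempotent $P \in Z(\ca{A}_\rho)$ such that
$$\psi(\cdot) = U \bigl( P(\cdot) + (I-P)(\cdot)^t \bigr) U^{-1}.$$
Setting $S := \phi(I)\,U$ and $T := U^{-1}$, both in $M_n^\times$, we obtain
$$\phi(\cdot) = \phi(I) \psi(\cdot) = S \bigl( P(\cdot) + (I-P)(\cdot)^t \bigr) T,$$
which is the desired form.

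Conversely, if $\phi$ is of the stated form with $S,T \in M_n^\times$ and $P \in Z(\ca{A}_\rho)$ a central idempotent, then by Lemma \ref{le:rank of canonical Jordan homomorphism} we have $r(PX + (I-P)X^t) = r(X)$ for every $X \in \ca{A}_\rho$, and left-right multiplication by the invertible matrices $S$ and $T$ does not change the rank, so
$$r(\phi(X)) = r\bigl( S(PX + (I-P)X^t) T \bigr) = r(PX + (I-P)X^t) = r(X),$$
showing that $\phi$ is a rank preserver.

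There is no real obstacle here: the whole content lies in Theorem \ref{thm:rank preserver}, and the passage from the unital case to the general case is the routine normalization $\psi := \phi(I)^{-1}\phi(\cdot)$, whose sole non-trivial input is the observation that a rank preserver must send the identity to an invertible matrix.
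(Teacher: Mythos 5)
Your proposal is correct and follows essentially the same route as the paper: observe that rank preservation forces $\phi(I)$ to be invertible, normalize to the unital map $\phi(I)^{-1}\phi(\cdot)$, and invoke Theorem \ref{thm:rank preserver} (with Lemma \ref{le:rank of canonical Jordan homomorphism} handling the converse). No issues.
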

		\begin{proof}
			In either direction it follows that the matrix $\phi(I) \in M_n$ is invertible. The equivalence follows by applying Theorem \ref{thm:rank preserver} to the unital map $\phi(I)^{-1}\phi(\cdot)$.
		\end{proof}
		
		\begin{example}
			Corollary \ref{cor:rank preserver without unitality} cannot be further strengthened  by assuming  that $\phi : \ca{A}_\rho \to M_n$ is only a rank-$k$ preserver for all $1 \le k \le n-1$. Indeed, let $\ca{A}_\rho = \ca{D}_n$ and consider the  map $\phi:\ca{D}_n \to M_n$ defined by
			$$\diag(x_{11},\ldots,x_{nn}) \mapsto \begin{bmatrix}
				x_{11} + x_{nn} & x_{nn} & \cdots & x_{nn} & 0\\
				x_{nn} & x_{22} + x_{nn} & \cdots & x_{nn} & 0\\
				\vdots & \vdots & \ddots & \vdots & \vdots \\
				x_{nn} & x_{nn} & \cdots & x_{(n-1)(n-1)} + x_{nn} & 0\\
				0 & 0 & \cdots & 0 & 0
			\end{bmatrix}.$$
			Clearly, $\phi(I)$ is a singular matrix, so in particular $\phi$ is not of the form outlined in Corollary \ref{cor:rank preserver without unitality}. Further, $\phi$ preserves rank of all singular matrices (hence ranks up to $n-1$). Indeed, let $X \in \mathcal{D}_n$ be a singular matrix. We need to prove that $r(\phi(X))=r(X)$. This is clear if $x_{nn} = 0$ so assume that $x_{nn} \ne 0$. By conjugating by a permutation matrix we can further assume that $x_{11} = 0$. By subtracting the first row from rows $2,\ldots,n-1$, we conclude $\phi(X)$ has the same rank as
			$$\begin{bmatrix}
				x_{nn} & x_{nn} & \cdots & x_{nn} & 0\\
				0 & x_{22} & \cdots & 0 & 0\\
				\vdots & \vdots & \ddots & \vdots & \vdots \\
				0& 0 & \cdots & x_{(n-1)(n-1)} & 0\\
				0 & 0 & \cdots & 0 & 0
			\end{bmatrix} \sim \diag(x_{nn},x_{22}, \ldots x_{(n-1)(n-1)},0)\sim X,$$
			where $\sim$ denotes the matrix equivalence.
		\end{example}


\begin{thebibliography}{99}
			
			\bibitem{Akkurt} M.~Akkurt, E.~Akkurt, G.~P.~Barker, \emph{Automorphisms of structural matrix algebras},  Oper.\ Matrices \textbf{7} (2013), no.\ 2, 431--439.
			
			\bibitem{Akkurt2} M.~Akkurt, E.~Akkurt, G.~P.~Barker, \emph{Jordan homomorphisms of the structural matrix algebras}, Linear Multilinear Algebra \textbf{63} (2015), no.\ 12, 2518--2525.
			
			\bibitem{AkkurtBarkerWild} M.~Akkurt, G.~P.~Barker, M.~Wild, \emph{Structural matrix algebras and their lattices of invariant subspaces}, Linear Algebra Appl.\ \textbf{394} (2005) 25--38.
			
			\bibitem{Albert1} A.~A.~Albert, \emph{On a certain algebra of quantum mechanics}, Ann.\ of Math.\ \textbf{35}(1) (1934), 65--73.
			
			
			\bibitem{Benkovic} D.~Benkovi\v{c}, \emph{Jordan homomorphisms on triangular matrices}, Linear Multilinear Algebra \textbf{53} (2005), 345--356.
			
			\bibitem{BeslagaDascalescu} F.~Beşleagă, S.~Dăscălescu, \emph{Classifying good gradings on structural matrix algebras}, Linear Multilinear Algebra \textbf{67} (2019), no.\ 10, 1948--1957.
			
			\bibitem{BeslagaDascalescu2} F.~Beşleagă, S.~Dăscălescu, \emph{Structural matrix algebras, generalized flags, and gradings}, Trans.\ Amer.\ Math.\ Soc.\ \textbf{373} (2020), no.\ 10, 6863--6885.
			
			\bibitem{BresarSemrl} M.~Bre\v{s}ar, P.~\v{S}emrl, \emph{An extension of the Gleason–Kahane–\.{Z}elazko theorem: A possible approach to Kaplansky’s problem}, Expo. Math. \textbf{26}(3) (2008) 269--277.
			
			\bibitem{BrusamarelloFornaroliKhrypchenko1} R.~Brusamarello, E.~Z.~Fornaroli, M.~Khrypchenko, \emph{Jordan isomorphisms of finitary incidence algebras}, Linear Multilinear Algebra \textbf{66} (2018), no.\ 3, 565--579.
			
			\bibitem{BrusamarelloFornaroliKhrypchenko2} R.~Brusamarello, E.~Z.~Fornaroli, M.~Khrypchenko, \emph{Jordan isomorphisms of the finitary incidence ring of a partially ordered category}, Colloq.\ Math.\ \textbf{159} (2020), no.\ 2, 285--307.
			
			
			\bibitem{ChooiLim} W.~L.~Chooi, M.~H.~Lim, \emph{Linear preservers on 
				triangular matrices}, Linear Algebra Appl.\ \textbf{269} (1998), 241--255.
			
			\bibitem{Coelho} S.~Coelho, \emph{The automorphism group of a structural matrix algebra}, Linear Algebra Appl.\ \textbf{195} (1993), 35--58.
			
			\bibitem{Coelho2} S.~Coelho, \emph{Automorphism groups of certain structural matrix rings}, Comm.\ Algebra \textbf{22} (1994), no.\ 14, 5567--5586.
			
			\bibitem{Frobenius} G.~Frobenius, \emph{Über die Darstellung der endlichen Gruppen durch lineare Substitutionen}, Sitzungsber.\ Deutsch.\ Akad.\ Wiss.\ Berlin (1897), 994--1015.
			
			\bibitem{GarcesKhrypchenko} J.~J.~Garc\'{e}s, M.~Khrypchenko, \emph{Potent preservers of incidence algebras}, Linear Algebra Appl.\ \textbf{635} (2022), 171--200.
			
			\bibitem{GarcesKhrypchenko2} J.~J.~Garc\'{e}s, M.~Khrypchenko, \emph{Linear maps preserving products equal to primitive idempotents of an incidence algebra}, J.\ Algebra \textbf{612} (2022), 460--474.
			
			\bibitem{GogicPetekTomasevic} I.~Gogi\'{c}, T.~Petek, M.~Toma\v{s}evi\'{c}, \emph{Characterizing Jordan embeddings between block upper-triangular subalgebras via preserving properties}, Linear Algebra Appl.\ \textbf{704} (2025) 192--217.
			
			\bibitem{Herstein} I.~N.~Herstein, \emph{Jordan homomorphisms}, Trans.\ Amer.\ Math.\ Soc.\ \textbf{81} (1956), 331--341.
			
			\bibitem{JacobsonRickart} N.~Jacobson, C.~E.~Rickart, \emph{Jordan homomorphisms of rings}, Trans.\ Amer.\ Math.\ Soc.\ \textbf{69} (1950), 479--502.
			
			\bibitem{Jordan} P.~Jordan, \emph{Über Verallgemeinerungsmöglichkeiten des Formalismus der Quantenmechanik}, Nachr.\ Akad.\ Wiss.\ Göttingen.\ Math.\ Phys.\ Kl.\ I, \textbf{41} (1933), 209--217.
			
			\bibitem{Kaplansky} I.~Kaplansky, \emph{Algebraic and analytic aspects of operator algebras}, Amer. Math. Soc. (1970), Providence.
			
			\bibitem{LiPierce} C.~K.~Li, S.~Pierce, \emph{Linear preserver problems}, Amer.\ Math.\ Monthly \textbf{108} (2001), no.\ 7, 591--605.
			
			\bibitem{LiTsing} C.~K.~Li, N.~K.~Tsing, \emph{Linear preserver problems: A brief introduction and some special techniques}, Linear Algebra Appl.\ \textbf{162--164} (1992), 217--235.
			
			\bibitem{MarcusMoyls} M.~Marcus, B.~Moyls, \emph{Linear transformations on algebras of matrices}, Canadian J.\ Math.\ \textbf{11} (1959) 61--66.
			
			\bibitem{MarcusMoyls2} M.~Marcus, B.~Moyls, \emph{Transformations on tensor product spaces}, Pacific J.\ Math \textbf{9} (1959) 1215--1221.
			
			\bibitem{MolnarSemrl} L.~Molnar, P.~\v{S}emrl, \emph{Some linear preserver problems on upper triangular matrices}, Linear Multilinear Algebra \textbf{45} (1998), no.\ 2--3, 189--206.
			
			
			
			\bibitem{Prasolov} V.~V.~Prasolov, Problems and Theorems in Linear Algebra, Transl. Math. Monogr., 134
			American Mathematical Society, Providence, RI, 1994. xviii+225 pp.
			
			\bibitem{Semrl2} P.~\v{S}emrl, \emph{Maps on matrix spaces}, Linear Algebra Appl.\ \textbf{413} (2006) no.\ 2, 364--393
			
			
			\bibitem{Semrl3} P.~\v{S}emrl, \emph{Wigner symmetries and Gleason's theorem}, J.\ Phys.\ A \textbf{54} (2021), no.\ 31, Paper No.\ 315301, 6 pp.
			
			\bibitem{SlowikVanWyk} R.~S\l owik, L.~van~Wyk, 
			\emph{Automorphisms of some structural infinite matrix rings}, Oper.\ Matrices \textbf{10} (2016), no.\ 1, 163--188.
			
			\bibitem{Smiley} M.~F.~Smiley, \emph{Jordan homomorphisms onto prime rings}, Trans.\ Amer.\ Math.\ Soc.\ \textbf{84} (1957), 426--429.
			
			\bibitem{Wigner} E.~P.~Wigner, \emph{Gruppentheorie und ihre Anwendung auf die Quantenmechanik der Atomspektrum}, Frederik Vieweg und Sohn, 1931.
			
			\bibitem{VanWyk1} L.~van~Wyk, \emph{Maximal left ideals in structural matrix rings}, Comm. Algebra \textbf{16} (1988) 399--419.
			
			\bibitem{VanWyk2} L.~van~Wyk, \emph{Special radicals in structural matrix rings}, Comm.\ Algebra \textbf{16} (1988) 421--435.
		\end{thebibliography}
	\end{document}